\newtheorem{definition}{Definition}[section]
\newtheorem{lemma}[definition]{Lemma}
\newtheorem{theorem}[definition]{Theorem}
\newtheorem{proposition}[definition]{Proposition}
\newtheorem{corollary}[definition]{Corollary}
\newtheorem{remark}[definition]{Remark}
\newtheorem{assumption}[definition]{Assumption}
\theoremstyle{plain}
\newtheorem{thmint}{Theorem}
\font\ddpp=msbm10  scaled \magstep 1  %Caracteres "doble palo".
\def\RR{\hbox{\ddpp R}}         %Numeros reales
\def\C{{\hbox{\ddpp C}}}       %Numeros complejos
\def\frg{{\frak g}}
\def\frn{{\frak n}}
\def\Re{{\frak R}{\frak e}\,}
\def\nilm{\Gamma\backslash G}
\def\db{{\bar{\partial}}}
\title[The Anomaly flow on nilmanifolds]
{The Anomaly flow on nilmanifolds}
\author{Mattia Pujia}
\address[M. Pujia]{Dipartimento di Matematica G. Peano, Universit\`a di Torino, V. Carlo Alberto 10, 10123 Torino, Italy}
\email{mattia.pujia@unito.it}
\author{Luis Ugarte}
\address[L. Ugarte]{Departamento de Matem\'aticas\,-\,I.U.M.A.\\
Universidad de Zaragoza\\
Campus Plaza San Francisco\\
50009 Zaragoza, Spain}
\email{ugarte@unizar.es}
\subjclass[2010]{}%Primary 53C44; Secondary 53C15, 53C30, 53C55}
\keywords{Anomaly flow; Hull-Strominger system; Nilmanifolds}
\subjclass[2010]{Primary 53C44 ; Secondary 53C15, 53B15, 53C30}
\thanks{The first-named author was partially supported by G.N.S.A.G.A. of I.N.d.A.M. 
The second-named author was supported by the projects MTM2017-85649-P (AEI/FEDER, UE), and E22-20R ``\'Algebra y Geometr\'ia'' (Gobierno de Arag\'on/FEDER)} 
\begin{document}

\begin{abstract}
We study the Anomaly flow on $2$-step nilmanifolds with respect to any Hermitian connection in the Gauduchon line. 
In the case of flat holomorphic bundle, the general solution to the Anomaly flow is given for any initial invariant Hermitian metric. 
The solutions depend on two constants $K_1$ and $K_2$, and we study the qualitative behaviour of the Anomaly flow in terms of their signs, as well as the convergence in Gromov-Hausdorff topology. 
The sign of $K_1$ is related to the conformal invariant introduced by Fu, Wang and Wu. 
%We study the qualitative behaviour of the Anomaly flow, which allows us to find %nilmanifolds admitting both immortal and ancient solutions. 
In the non-flat case, we find the general evolution equations of the Anomaly flow under certain initial assumptions. This allows us to detect non-flat solutions to the Hull-Strominger-Ivanov system on a concrete nilmanifold, which appear as stationary points of the Anomaly flow with respect to the 
Bismut connection.
\end{abstract}

\maketitle

%%% indice
\setcounter{tocdepth}{2} \tableofcontents

\section{Introduction}

\noindent
The {\em Anomaly flow} is a coupled flow of Hermitian metrics introduced by Phong, Picard and Zhang in \cite{PPZ18}. The flow, which was originally proposed as a new tool to detect explicit solutions to the Hull-Strominger system,  leads to some interesting problems in complex non-K\"ahler geometry and its study is just began \cite{FHP19,FP19_2,FP19,PPZ18_2,PPZ18_3,PPZ18_4,PPZ19}.\medskip

Let $X$ be a 3-dimensional complex manifold equipped with a nowhere vanishing $(3,0)$-form $\Psi$ and a holomorphic vector bundle $\pi:E\to X$. 
%%%The Anomaly flow is 
In this paper we consider 
the coupled flow of Hermitian metrics $(\omega_t,H_t)$, with $\omega_t$ on $X$ and $H_t$ along the fibers of $E$, given by
\begin{equation}\label{AF-G}
\begin{aligned}
\partial_t(\Vert\Psi\Vert_{\omega_t}\,\omega_t^2)\,\, = &\,\,\,  {i\partial\overline\partial \omega_t - \frac{\alpha'}{4}\left( {\rm Tr}(Rm_t^\tau\wedge Rm_t^\tau) - {\rm Tr}(A_t^\kappa\wedge A_t^\kappa)\right)}\,, \\
H_t^{-1} \partial_t\, H_t  \,\, = & \,\,\, \frac{\omega_t^2\wedge A_t^\kappa}{\omega_t^3}\,,
\end{aligned}
\end{equation}
where $Rm^\tau$ and $A^\kappa$ are, respectively, the curvature tensors of Gauduchon connections $\nabla^\tau$ on $(X,\omega_t)$ and $\nabla^\kappa$ on $(E,H_t)$, and $\alpha'\in\RR$ is the so-called {\em slope parameter}.\medskip

Phong, Picard and Zhang proved that, if the connections $\nabla^\tau$ and $\nabla^\kappa$ in \eqref{AF-G} are both Chern, then the flow preserves the {\em conformally balanced} condition $d(\Vert\Psi\Vert_{\omega}\,\omega^2)=0$ and, under an extra assumption on the initial metric $\omega_0$, it is well-posed \cite{PPZ18}. If furthermore $\omega_0$ is conformally balanced and the flow is defined for every $t\in[0,\infty)$, then its limit points $(\omega_\infty,H_\infty)$ are automatically solutions to the Hull-Strominger system \cite{PPZ18}.

More generally, any stationary solution to the Anomaly flow \eqref{AF-G}, satisfying the conformally balanced condition and for which the curvature form $A^\kappa$ is of type $(1,1)$, is a solution to the {\em Hull-Strominger system} \cite{Hull86,Strom}:
\begin{equation}\label{HS}
\begin{aligned}
&\omega^2\wedge A^\kappa=0\,,\quad (A^\kappa)^{2,0}=(A^\kappa)^{0,2}=0\,,\\
&i\, \partial\overline\partial \omega = \frac{\alpha'}{4}\left( {\rm Tr}(Rm^\tau\wedge Rm^\tau) - {\rm Tr}(A^\kappa\wedge A^\kappa)\right)\,,\\
&d(\Vert\Psi\Vert_{\omega}\,\omega^2)=0\,.
\end{aligned}
\end{equation}
Here, the first two equations represent the {\em Hermitian-Yang-Mills equation} for the connection $\nabla^\kappa$; the third equation follows by the Green-Schwarz cancellation mechanism in string theory and it is known as {\em anomaly cancellation}; while, the last equation was originally formulated as
$$
d^\ast \omega=i(\bar\partial-\partial)\ln\Vert\Psi\Vert_\omega\,,
$$
where $d^\ast$ is the co-differential, and the above equivalent expression is due to Li and Yau \cite{LY05}. 

We mention that the Hull-Strominger system arises from the symmetric compactification of the 10-dimensional heterotic string theory and it has been extensively studied both from physicists and mathematicians (see e.g.  \cite{AG12,AG12_2,Fei16,FPZ17,FIUV14,FGV19,FY07, FY08,notasM,OUV,UV14}).\medskip

The Anomaly flow turned out to be a powerful tool in the study of the Hull-Strominger system. In particular, it was used to give an alternative proof of the outstanding results obtained by Fu and Yau in \cite{FY07,FY08} about the existence of solutions to \eqref{HS}. More precisely, in \cite{PPZ18_3} Phong, Picard and Zhang studied the flow on a torus fibration over a K3 surface, showing that if $\omega_0$ is conformally balanced and satisfies some extra assumptions, then the flow has a long-time solution which always converges to a solution of the Hull-Strominger system, once the connections $\nabla^\tau$ and $\nabla^\kappa$ are both Chern.

In a attempt to better understand the `general' behaviour of the Anomaly flow, a simplified version of the flow with `flat' bundle was proposed in \cite{PPZ18_2}, namely 
\begin{equation}\label{AF-triv}
\partial_t(\Vert\Psi\Vert_{\omega_t}\,\omega_t^2)\,\, = \,\,  i\partial\overline\partial \omega_t - \frac{\alpha'}{4}\, {\rm Tr}(Rm_t^\tau\wedge Rm_t^\tau)\,.
\end{equation}
Then, following the approach proposed by Fei and Yau in \cite{FY15} to solve the Hull-Strominger system on complex unimodular Lie groups, Phong, Picard and Zhang investigated this new flow on such Lie groups \cite{PPZ19}.\medskip

In the present paper we study the behaviour of the Anomaly flows \eqref{AF-G} and \eqref{AF-triv} on a class of nilmanifolds. We will assume the trace ${\rm Tr}(A_t^\kappa\wedge A_t^\kappa)$ to be of a special type (see Assumption \ref{condition}) and all the involved structures are to be intended {\em invariant}.

Our first result characterizes the solutions to the Anomaly flows under our hypotheses.
\begin{thmint}\label{thm_A} Let $M=\Gamma\,\backslash\, G$ be a $2$-step nilmanifold of dimension $6$ with first Betti number $b_1\geq4$. Let $J$ be a non-parallelizable complex structure on $M$. Then, there always exists a preferable $(1,0)$-coframe $\{\zeta^i\}$ on $X=(M,J)$ such that the family of Hermitian metrics $\omega_t$ solving \eqref{AF-G} or \eqref{AF-triv} is given by
$$
\omega_t =\frac i2\,r(t)^2 \left( \zeta^{1\bar{1}} + a\, \zeta^{2\bar{2}}+b\, \zeta^{1\bar{2}}+\bar b\, \zeta^{2\bar{1}} \right)+ \frac i2\, c\, \zeta^{3\bar{3}}\,,
$$
for some $a,c\in\RR$ and $b\in\C$ depending on $\omega_0$. Furthermore, if $\omega_t$ is a solution to the Anomaly flow~\eqref{AF-triv}, then $r(t)^2$ solves the ODE
\begin{equation}\label{MP-intro}
\frac d{dt}\, r(t)^2= K_1+\frac{ K_2}{r(t)^4}\,, 
\end{equation}
with $K_{1}, K_2\in\RR$ constants depending on $K_1=K_1(\omega_0)$ and $K_2=K_2(\omega_0,\alpha',\tau)$.
\end{thmint}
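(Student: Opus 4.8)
The plan is to fix the underlying complex geometry, reduce an arbitrary invariant initial metric to the stated normal form, and then show that the flow collapses to a single scalar ODE.

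\textbf{Step 1: preferable coframe and normalisation of $\omega_0$.} I would start from the classification of invariant complex structures $J$ on $6$-dimensional $2$-step nilmanifolds with $b_1\geq4$: in an adapted $(1,0)$-coframe the structure equations read $d\zeta^1=d\zeta^2=0$, while $d\zeta^3$ is a fixed linear combination of $\zeta^{1\bar{1}},\zeta^{2\bar{2}},\zeta^{1\bar{2}},\zeta^{2\bar{1}}$ and $\zeta^{12}$, the hypothesis that $J$ be non-parallelizable being equivalent to the non-vanishing of the $(1,1)$-part of $d\zeta^3$. A general invariant Hermitian metric is $\omega=\tfrac{i}{2}\sum_{j,k}g_{j\bar k}\,\zeta^{j\bar k}$ with $(g_{j\bar k})$ Hermitian positive definite. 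Since $\zeta^1,\zeta^2$ are closed, the transformations $\zeta^3\mapsto\zeta^3+\lambda_1\zeta^1+\lambda_2\zeta^2$ and the linear changes of $(\zeta^1,\zeta^2)$ preserving the $(1,1)$-part of $d\zeta^3$ are automorphisms of the complex structure. I would use the former to impose $g_{1\bar3}=g_{2\bar3}=0$ and the latter to bring the $(1,2)$-block to $\zeta^{1\bar{1}}+a\,\zeta^{2\bar{2}}+b\,\zeta^{1\bar{2}}+\bar b\,\zeta^{2\bar{1}}$, after factoring out $r^2:=g_{1\bar1}$. This produces the preferable coframe, and $\omega_0$ takes the asserted form with $a,c\in\RR$, $b\in\C$, and $r(0)^2$ read off from the normalised metric.

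\textbf{Step 2: reduction to ODEs.} As all the data and all the operators in \eqref{AF-G}, \eqref{AF-triv} are invariant, invariance is preserved along the flow, so $\omega_t$ remains invariant and is encoded by the functions $r(t)^2,a(t),b(t),c(t)$. The key point is that the three tensors appearing in \eqref{AF-triv} are combinations of the ``horizontal'' form $\Phi_1:=\zeta^{1\bar{1}}\wedge\zeta^{2\bar{2}}$ and of the invariant $(2,2)$-forms carrying the factor $\zeta^{3\bar{3}}$, which I collect as $\Phi_2$. Using $\det(g_{j\bar k})=r^4(a-|b|^2)c$ one gets $\Vert\Psi\Vert_{\omega}=\big(r^4(a-|b|^2)c\big)^{-1/2}$, and writing $\omega^2=\omega_s^2+2\,\omega_s\wedge\tfrac{i}{2}c\,\zeta^{3\bar{3}}$ with $\omega_s$ the $(1,2)$-block (so that $(\zeta^{3\bar{3}})^2=0$) one finds
\begin{equation*}
\Vert\Psi\Vert_{\omega}\,\omega^2=r(t)^2\,\Lambda_1\,\Phi_1+(\text{terms in }\Phi_2),
\end{equation*}
where $\Lambda_1$ and the $\Phi_2$-coefficients depend only on $a,b,c$. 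Hence the $\Phi_1$-component of $\partial_t(\Vert\Psi\Vert_{\omega}\,\omega^2)$ is proportional to $\tfrac{d}{dt}r^2$, while its $\Phi_2$-components are linear in $\dot a,\dot b,\dot c$.

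\textbf{Step 3: the right-hand side and the ODE.} I would then compute the right-hand side of \eqref{AF-triv}. The term $i\partial\overline\partial\omega$ only sees the $\zeta^{3\bar{3}}$-part of $\omega$, since $\omega_s$ is built from closed forms; and $\partial\overline\partial\zeta^{3\bar{3}}$ equals, up to sign, the square of the $(1,1)$-part of $d\zeta^3$, hence a constant multiple of $\Phi_1$ with coefficient linear in $c$ and independent of $\alpha'$ and $\tau$: this is $K_1=K_1(\omega_0)$. For the curvature term I would compute $Rm^\tau$ for the Gauduchon connection of the invariant metric; rescaling the horizontal coframe shows that $Rm^\tau$ scales like $r^{-2}$, so that ${\rm Tr}(Rm^\tau\wedge Rm^\tau)$ is again a multiple of $\Phi_1$, now with coefficient proportional to $r^{-4}$ and depending on $\alpha'$ and $\tau$: this is $K_2=K_2(\omega_0,\alpha',\tau)$. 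Since both contributions are proportional to $\Phi_1$, the $\Phi_2$-component of the right-hand side vanishes; matching it with Step 2 forces $\dot a=\dot b=\dot c=0$, so that $a,b,c$ are constant and $\omega_t$ keeps its form, while matching the $\Phi_1$-components yields exactly \eqref{MP-intro}. The same bookkeeping applies to \eqref{AF-G} once the extra trace ${\rm Tr}(A^\kappa\wedge A^\kappa)$ is controlled through Assumption \ref{condition}.

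\textbf{Main obstacle.} The principal difficulty is the explicit computation of $Rm^\tau$ and of ${\rm Tr}(Rm^\tau\wedge Rm^\tau)$ for an arbitrary connection in the Gauduchon line: one must carry the $\tau$-dependent torsion correction to the Chern connection through the curvature and verify both that the result is purely proportional to $\Phi_1$, with no surviving $\zeta^{3\bar{3}}$-component, and that it scales as $r^{-4}$. It is precisely this proportionality to $\Phi_1$ that annihilates the $\Phi_2$-part of the flow, forces $a,b,c$ to be constant, and reduces the whole system to the single ODE \eqref{MP-intro}.
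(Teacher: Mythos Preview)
Your strategy is essentially the paper's: reduce $\omega_0$ to almost diagonal form by shifting $\zeta^3$, show the right-hand side of the flow is a multiple of $\zeta^{12\bar1\bar2}$, deduce that the remaining metric coefficients are slaved to $r(t)^2$, and read off the scalar ODE. Two points deserve tightening.

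First, the ``linear changes of $(\zeta^1,\zeta^2)$ preserving the $(1,1)$-part of $d\zeta^3$'' in Step~1 are neither needed nor generally available: once the structure equations are in normal form with the specific constants $\rho,\lambda,x,y$, the admissible linear changes are very restricted. The form $r^2(\zeta^{1\bar1}+a\,\zeta^{2\bar2}+b\,\zeta^{1\bar2}+\bar b\,\zeta^{2\bar1})$ is obtained simply by factoring out $g_{1\bar1}$, not by a coframe change. Relatedly, in Step~2 you assert that $\omega_t$ is encoded by $r(t)^2,a(t),b(t),c(t)$ before you know the flow preserves $g_{1\bar3}=g_{2\bar3}=0$; this should be a consequence of Step~3, not an input to Step~2. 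The paper handles this by writing $\omega_t$ with all six coefficients $r,s,k,u,v,z$ and letting the vanishing of the $\Phi_2$-components of the flow force $v(t)=z(t)=0$ and $s(t)^2,u(t)$ proportional to $r(t)^2$, $k(t)^2$ constant.

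Second, your scaling heuristic for ${\rm Tr}(Rm^\tau\wedge Rm^\tau)$ gives the $r^{-4}$ dependence of its $\Phi_1$-coefficient but does \emph{not} by itself show that the $\Phi_2$-components vanish; a partial rescaling of the metric does not make the curvature a simple homogeneous object. In the paper this is established by an honest computation of the curvature $2$-forms $(\Omega^\tau)^i_j$ in an adapted real basis (their Proposition~2.2 and Appendix~A), from which one reads off directly that ${\rm Tr}(\Omega^\tau\wedge\Omega^\tau)$ is a multiple of $\zeta^{12\bar1\bar2}$. You correctly flag this as the main obstacle; it is indeed where the work lies, and the scaling argument alone does not close it.
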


As a direct consequence, we have that 
the qualitative behaviour of the Anomaly flow \eqref{AF-triv} only depends on signs of the constants $K_1$ and $K_2$. Remarkably, this implies that the Anomaly flow \eqref{AF-triv} admits both immortal and ancient invariant solutions on the same nilmanifolds, and, as far as we know, this provides the second example of a metric flow with such a property (the first one is the pluriclosed flow \cite{AL}). 
We also show that the constant $K_1$ is closely related to a conformal invariant of the metric $\omega_0$ introduced and studied in~\cite{FWW}.\smallskip 
%(see Section~\ref{signosdeKs}).\smallskip

Our second result is about the convergence of the Anomaly flow \eqref{AF-triv} in Gromov-Hausdorff topology. In particular, under the same assumption of Theorem \ref{thm_A}, we have

\begin{thmint}\label{thm_conv} Let $\omega_t$ be an immortal solution to the Anomaly flow \eqref{AF-triv}. Then, $(X, (1+t)^{-1}\omega_t)$ converges either to a point or to a real torus $\mathbb T^4$ in the Gromov-Hausdorff topology as $t\to +\infty$, depending on the initial metric $\omega_0$ and the signs of $K_1$ and $K_2$ in \eqref{MP-intro}.
\end{thmint}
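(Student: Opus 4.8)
\emph{Sketch of the argument.} The plan is to exploit the explicit description of the solution furnished by Theorem \ref{thm_A}, reducing the convergence statement to two independent pieces: an asymptotic analysis of the ODE \eqref{MP-intro} and a collapsing argument for the underlying torus fibration. First I would recall that the structure equations underlying the preferable coframe give $d\zeta^1=d\zeta^2=0$, so that the real and imaginary parts of $\zeta^1,\zeta^2$ descend to a flat $4$-torus and induce a torus fibration $\pi\colon X\to\mathbb{T}^4$ whose fibre is swept out by the central direction $\zeta^3$. Writing $u(t)=r(t)^2$, the rescaled metric reads
$$
(1+t)^{-1}\omega_t=\frac i2\,\frac{u(t)}{1+t}\bigl(\zeta^{1\bar1}+a\,\zeta^{2\bar2}+b\,\zeta^{1\bar2}+\bar b\,\zeta^{2\bar1}\bigr)+\frac i2\,\frac{c}{1+t}\,\zeta^{3\bar3},
$$
so the whole geometry is governed by the base factor $u(t)/(1+t)$ and the fibre factor $c/(1+t)$. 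Since $c$ is a fixed constant, $c/(1+t)\to0$ in every case, and hence the fibre of $\pi$ always collapses.

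Next I would analyse $\dot u=K_1+K_2/u^2$, restricting to immortal solutions, i.e. those with $u(t)>0$ for all $t\ge0$. A phase-line discussion in terms of the signs of $K_1,K_2$ and of $u(0)$ shows that exactly two asymptotic regimes occur. Either $u(t)\to\infty$, which forces $\dot u\to K_1$ and hence $u(t)/(1+t)\to K_1$; this requires $K_1>0$ (with $u(0)$ above the equilibrium $u_\ast=\sqrt{-K_2/K_1}$ when $K_2<0$). Or $u(t)$ remains bounded — converging to a finite equilibrium (when $K_1<0<K_2$, or at $u_\ast$ when $K_1>0>K_2$ and $u(0)=u_\ast$), growing sublinearly like $(3K_2t)^{1/3}$ (when $K_1=0<K_2$), or staying constant (when $K_1=K_2=0$) — in all of which $u(t)/(1+t)\to0$. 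Thus for immortal solutions the base factor obeys the dichotomy $u(t)/(1+t)\to K_1>0$ or $u(t)/(1+t)\to0$, governed precisely by $\omega_0$ and the signs of $K_1,K_2$.

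It remains to turn these limits into Gromov--Hausdorff convergence. Because the metric above is block-diagonal with the $\{\zeta^1,\zeta^2\}$-block orthogonal to $\zeta^3$, the map $\pi$ is a Riemannian submersion from $(X,(1+t)^{-1}\omega_t)$ onto $\mathbb{T}^4$ carrying the flat metric $\tfrac{u(t)}{1+t}\,g_{\mathrm{base}}$, and in particular is $1$-Lipschitz. Lifting a minimizing base geodesic horizontally and then moving inside the target fibre yields, for all $x,y$,
$$
d_{\mathbb{T}^4}(\pi x,\pi y)\ \le\ d_X(x,y)\ \le\ d_{\mathbb{T}^4}(\pi x,\pi y)+\mathrm{diam}(\text{fibre}),
$$
with $\mathrm{diam}(\text{fibre})=O\!\bigl(\sqrt{c/(1+t)}\,\bigr)\to0$. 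Hence $\pi$ is an $\varepsilon_t$-isometry with $\varepsilon_t\to0$, so $(X,(1+t)^{-1}\omega_t)$ and the base $(\mathbb{T}^4,\tfrac{u(t)}{1+t}g_{\mathrm{base}})$ have vanishing Gromov--Hausdorff distance. By the triangle inequality, in the regime $u(t)/(1+t)\to K_1>0$ the base metrics converge to the fixed flat metric $K_1\,g_{\mathrm{base}}$, giving convergence to the real torus $\mathbb{T}^4$, whereas in the regime $u(t)/(1+t)\to0$ the base collapses as well, so $\mathrm{diam}\bigl(X,(1+t)^{-1}\omega_t\bigr)\to0$ and the limit is a point.

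The step I expect to be the main obstacle is the uniform upper bound $d_X(x,y)\le d_{\mathbb{T}^4}(\pi x,\pi y)+\mathrm{diam}(\text{fibre})$. While the $1$-Lipschitz lower bound is automatic for a Riemannian submersion, the upper bound requires lifting the base geodesic horizontally and correcting inside the fibre: the non-integrability of the horizontal distribution ($d\zeta^3\ne0$) makes the horizontal lift land at a point of the target fibre differing from $y$ by a holonomy term. The key observation is that this only obstructs closing up \emph{horizontal} loops and is irrelevant here, since the discrepancy is absorbed by a motion along the collapsing fibre, whose diameter is $O(\sqrt{c/(1+t)})\to0$. Making this estimate quantitative and uniform in $t$ is the crux of the proof; the ODE case analysis, by contrast, is elementary once $u_\ast$ and the linear/sublinear growth rates are identified.
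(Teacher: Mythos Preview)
Your proposal is correct and follows essentially the same route as the paper: identify the torus fibration $\pi\colon X\to\mathbb{T}^4$ coming from $d\zeta^1=d\zeta^2=0$, use the ODE analysis of Section~\ref{qualit-model-prob} to determine whether $r(t)^2/(1+t)$ tends to $K_1>0$ or to $0$, and observe that the $\zeta^3$-fibre collapses in every case. The paper's own proof is very terse (it simply says ``either $\{\zeta^1,\zeta^2,\zeta^3\}$ or $\{\zeta^3\}$ shrink to zero \ldots\ and hence the claim follows''), whereas you supply the Riemannian-submersion/$\varepsilon$-isometry framework and the explicit diameter bound $d_X(x,y)\le d_{\mathbb{T}^4}(\pi x,\pi y)+\mathrm{diam}(\text{fibre})$, which is exactly the standard machinery needed to turn the heuristic collapsing picture into a genuine Gromov--Hausdorff statement.
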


It is worth noting that, Theorem \ref{thm_A} holds for any initial invariant Hermitian metric $\omega_0$ on $X$. Nonetheless, in view of the Hull-Strominger system, it would be desirable for the Anomaly flows to preserve the locally conformally balanced condition. Under the assumptions of Theorem \ref{thm_A}, our third result states as follows

\begin{thmint}\label{thm_B} The Anomaly flows \eqref{AF-G} and \eqref{AF-triv} preserve the balanced condition.
\end{thmint}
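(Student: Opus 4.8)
The plan is to read the conclusion directly off the explicit description of the flow provided by Theorem \ref{thm_A}, so that no PDE analysis is needed. Recall that a Hermitian metric on the $3$-fold $X$ is balanced precisely when $d\omega^2=0$, and that for invariant data the norm $\Vert\Psi\Vert_\omega$ is a constant function on $M$, so the balanced and conformally balanced conditions coincide here. By Theorem \ref{thm_A}, both flows \eqref{AF-G} and \eqref{AF-triv} produce a metric of the form
\begin{equation*}
\omega_t=\frac i2\Big(r(t)^2\,\eta+c\,\zeta^{3\bar 3}\Big),\qquad \eta:=\zeta^{1\bar 1}+a\,\zeta^{2\bar 2}+b\,\zeta^{1\bar 2}+\bar b\,\zeta^{2\bar 1},
\end{equation*}
in which the preferable coframe $\{\zeta^i\}$ and the constants $a,b,c$ are fixed while only the conformal factor $r(t)^2$ evolves. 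It therefore suffices to analyze how $d\omega_t^2$ depends on $r(t)$.

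First I would compute $\omega_t^2$. Since $\zeta^{3\bar 3}\wedge\zeta^{3\bar 3}=0$, the square carries no $c^2$ term and collapses to
\begin{equation*}
\omega_t^2=-\tfrac14\Big(r(t)^4\,\eta^2+2\,c\,r(t)^2\,\eta\wedge\zeta^{3\bar 3}\Big).
\end{equation*}
Next I would apply $d$, invoking the structure equations of the preferable coframe recorded in the construction behind Theorem \ref{thm_A}, whose decisive feature is that $\zeta^1$ and $\zeta^2$ are closed (this is exactly what forces $b_1\ge 4$). Hence the purely horizontal form $\eta$ is closed, so $d(\eta^2)=2\,\eta\wedge d\eta=0$ and the $r^4$ term drops out entirely. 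Using $d(\eta\wedge\zeta^{3\bar 3})=\eta\wedge d(\zeta^{3\bar 3})$ for the remaining term, I obtain
\begin{equation*}
d\omega_t^2=-\tfrac{c}{2}\,r(t)^2\,\eta\wedge d\big(\zeta^{3\bar 3}\big).
\end{equation*}

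The key observation is now that the $5$-form $\Phi:=\eta\wedge d(\zeta^{3\bar 3})$ is independent of $t$, so the entire time dependence of $d\omega_t^2$ is carried by the scalar prefactor $-\tfrac{c}{2}\,r(t)^2$. Since $r(t)^2>0$ on the whole interval of existence (it is a metric coefficient, evolving by \eqref{MP-intro}), we conclude that $d\omega_t^2=0$ for some $t$ if and only if $c\,\Phi=0$, if and only if $d\omega_t^2=0$ for all $t$. In particular, if $\omega_0$ is balanced then so is $\omega_t$ for every $t$, and the argument applies verbatim to both \eqref{AF-G} and \eqref{AF-triv} because Theorem \ref{thm_A} yields the same shape for $\omega_t$ in each case. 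There is no genuine analytic obstacle here; the only point demanding care is to confirm from the explicit structure equations that $\zeta^1$ and $\zeta^2$ are closed in the preferable coframe, so that $d\eta=0$. Once this is in hand, the clean factorization of the time dependence through $r(t)^2$ makes the preservation of the balanced condition immediate.
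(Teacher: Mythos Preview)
Your proof is correct and takes a genuinely different route from the paper's. The paper (Theorem~\ref{bal-cond}) works \emph{before} establishing the explicit form of $\omega_t$: it extracts the conserved quantities \eqref{ecu-1}--\eqref{ecu-5} directly from the evolution equation~\eqref{evol_omega}, combines them with the algebraic characterization \eqref{Bnilp-condition} of the balanced condition from \cite{U}, and shows that $\omega_t$ is balanced if and only if $c_1(x+iy)+c_2=i\lambda\bar c_5$, a relation among constants determined solely by $\omega_0$. You instead invoke Theorem~\ref{thm_A} first and observe that $d\omega_t^2$ factors as $r(t)^2$ times a $t$-independent $5$-form, using only that $\zeta^1,\zeta^2$ are closed in the preferable coframe. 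Your argument is shorter and more transparent once the shape of $\omega_t$ is in hand, but it reverses the paper's logical order (Theorem~\ref{bal-cond} precedes Theorem~\ref{almost-diag-prop} in the text, and the latter is what underlies Theorem~\ref{thm_A}). The paper's approach has the advantage of requiring only that the right-hand side of the flow lies in $\C\cdot\zeta^{12\bar 1\bar 2}$, not the full structure of the solution; it also yields, with the same machinery, preservation of the locally conformally K\"ahler condition (part~(ii) of Theorem~\ref{bal-cond}).
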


%We will also show that the {\em locally conformally K\"ahler} condition is preserved along the flows \eqref{AF-G} and \eqref{AF-triv}.\medskip

The last part of this paper is devoted to the study of the Anomaly flow \eqref{AF-G} in some interesting non-flat cases. In particular, we consider a class of Lie groups $G$ arising from Theorem \ref{thm_A}, and we equip them with holomorphic tangent bundles, that is $E=T^{1,0}G$. 

\begin{thmint}\label{thm_C} If the initial metrics $(\omega_0,H_0)$ are both diagonal, then $\omega_t$ and $H_t$ hold diagonal along the Anomaly flow \eqref{AF-G}.
\end{thmint}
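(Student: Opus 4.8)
The plan is to use invariance to reduce \eqref{AF-G} to an ordinary differential equation and then establish that diagonality is preserved by showing that the generating vector field is tangent to an invariant submanifold. Since $X=(M,J)$, the bundle $E=T^{1,0}G$, the form $\Psi$, and both metrics are invariant, the flow \eqref{AF-G} becomes a system of ODEs on the finite-dimensional space $\mathcal P$ of pairs $(\omega,H)$ of invariant Hermitian metrics on $X$ and on $E$. The diagonal pairs
$$\omega=\frac i2\Big(g_{1\bar1}\,\zeta^{1\bar1}+g_{2\bar2}\,\zeta^{2\bar2}+g_{3\bar3}\,\zeta^{3\bar3}\Big),\qquad H={\rm diag}(h_{1\bar1},h_{2\bar2},h_{3\bar3}),$$
span a linear submanifold $\mathcal D\subset\mathcal P$, and by uniqueness of solutions to ODEs it suffices to check that the flow's vector field is tangent to $\mathcal D$; equivalently, that for every diagonal $(\omega,H)$ the right-hand sides of the two equations in \eqref{AF-G} carry no $\zeta^{i\bar\jmath}$-term with $i\neq j$, respectively no off-diagonal entry in ${\rm End}(E)$.

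First I would record the structure equations of the preferable coframe $\{\zeta^i\}$ furnished by Theorem \ref{thm_A}: as $M$ is $2$-step nilpotent and $J$ is non-parallelizable, one has $d\zeta^1=d\zeta^2=0$ while $d\zeta^3$ is a fixed $2$-form in $\zeta^1,\zeta^2,\bar\zeta^1,\bar\zeta^2$ with no $(0,2)$-part (by integrability of $J$). From these one computes $i\partial\db\,\omega$ for diagonal $\omega$ and checks that it is again a diagonal $(2,2)$-form. Next I would compute the Gauduchon curvatures $Rm^\tau$ of $(X,\omega)$ and $A^\kappa$ of $(E,H)$: since $H$ is diagonal and $E=T^{1,0}G$ is the holomorphic tangent bundle, the connection matrices are diagonal up to off-diagonal pieces governed solely by the structure constants in $d\zeta^3$, and one verifies that the traces ${\rm Tr}(Rm^\tau\wedge Rm^\tau)$ and ${\rm Tr}(A^\kappa\wedge A^\kappa)$ are diagonal $(2,2)$-forms. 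Hence the whole right-hand side of the first equation is diagonal.

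It remains to transfer this to $\partial_t\omega$ and to treat the bundle equation. For diagonal $\omega$ the norm $\|\Psi\|_\omega$ depends only on $g_{1\bar1},g_{2\bar2},g_{3\bar3}$, and $\omega^2$ is a diagonal $(2,2)$-form; thus $\omega\mapsto\|\Psi\|_\omega\,\omega^2$ sends $\mathcal D$ into diagonal $(2,2)$-forms and its differential restricts to an isomorphism of the three-dimensional diagonal subspaces. Inverting the left-hand side $\partial_t(\|\Psi\|_\omega\,\omega^2)$ against the diagonal right-hand side therefore yields a diagonal $\partial_t\omega$. For the second equation, the endomorphism $\omega^2\wedge A^\kappa/\omega^3=\Lambda_\omega A^\kappa$ is the $\omega$-trace of the curvature; with $\omega$ and $H$ diagonal the same curvature computation shows that its off-diagonal entries vanish, so $H^{-1}\partial_t H$ is diagonal and $\mathcal D$ is invariant.

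The main difficulty I anticipate is the curvature bookkeeping: confirming that the off-diagonal contributions produced by the nonzero torsion encoded in $d\zeta^3$ cancel in both ${\rm Tr}(Rm^\tau\wedge Rm^\tau)$ and in $\Lambda_\omega A^\kappa$, and that this holds uniformly in the Gauduchon parameters $\tau$ and $\kappa$. A structural way to organize and to double-check this cancellation is to produce a torus action $\zeta^i\mapsto e^{\sqi\,\theta_i}\zeta^i$ compatible with the structure equations whose invariant metrics are exactly the diagonal ones; since \eqref{AF-G} is natural, and hence equivariant under the automorphisms of $(X,\Psi)$, the fixed locus of such an action is automatically preserved, which is precisely the assertion of Theorem \ref{thm_C}. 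I would use the explicit computation as the primary argument and the symmetry as a guiding and verifying principle.
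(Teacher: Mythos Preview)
Your strategy coincides with the paper's: reduce to a finite-dimensional ODE on invariant pairs and show the diagonal locus $\mathcal D$ is invariant by checking that both right-hand sides are diagonal at every diagonal $(\omega,H)$. The paper does exactly this, invoking Proposition~\ref{traza-tau} and Lemma~\ref{lemm_app} to see that $i\partial\bar\partial\omega$, ${\rm Tr}(Rm^\tau\!\wedge Rm^\tau)$ and ${\rm Tr}(A^\kappa\!\wedge A^\kappa)$ are all multiples of $\zeta^{12\bar1\bar2}$, then Corollary~\ref{cor-diag} for the diagonality of $\omega_t$, and an explicit computation (equation~\eqref{inst-diag}) that $\omega^2\wedge(A^\kappa)^i_{\bar\jmath}=0$ for $i\neq j$.

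One step in your outline needs tightening. Knowing that the differential of $\Phi\colon\omega\mapsto\|\Psi\|_\omega\,\omega^2$ \emph{restricts} to an isomorphism of the diagonal subspaces does not by itself imply that a diagonal $\partial_t\Phi(\omega)$ forces a diagonal $\partial_t\omega$; for that you need $D\Phi$ at a diagonal point to be block-diagonal with respect to the splitting into diagonal and off-diagonal directions (equivalently, that off-diagonal $\delta\omega$ produces only off-diagonal $\delta(\omega^2)$, and that $\delta\|\Psi\|_\omega=0$ for off-diagonal $\delta\omega$). This is true and immediate from the explicit expansion of $2\,\omega\wedge\omega$ displayed before~\eqref{ecu-main}, but you should say it. The paper bypasses this inversion entirely by tracking the off-diagonal components of $\|\Psi\|_\omega\,\omega^2$ directly via \eqref{ecu-3}--\eqref{ecu-5}: if they vanish at $t=0$ they vanish identically.

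Your torus-symmetry idea is a clean organizing principle, but be careful about which torus actually acts. The map $\zeta^1\mapsto e^{i\theta_1}\zeta^1$, $\zeta^2\mapsto e^{i\theta_2}\zeta^2$, $\zeta^3\mapsto\zeta^3$ preserves the structure equations only when $\lambda=0$ (and then with $\theta_1+\theta_2=0$ if $\rho=1$); this is precisely the hypothesis~\eqref{J-nilp-lambda=0} under which the paper states and proves Theorem~\ref{thm_C}. For $\lambda\neq0$ the symmetry collapses, and in fact the $e^{34}$-components of the off-diagonal curvature forms in Appendix~A acquire terms proportional to $\lambda$ that survive the contraction with $\omega^2$. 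So you should make the restriction $\lambda=0$ explicit from the start rather than working with a generic $d\zeta^3$; otherwise your claimed vanishing of the off-diagonal entries of $\Lambda_\omega A^\kappa$ may fail.
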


As a relevant application of this theorem, we obtain solutions to the field equations of the heterotic string on the nilmanifold corresponding to the nilpotent Lie group $N_3$. 
In \cite{Iv} Ivanov proved that, in order to solve such field equations, the solution to the Hull-Strominger system \eqref{HS} must satisfy the extra condition
that $Rm$ is the curvature of an $\rm SU(3)$-instanton with
respect to $\omega$ (see also \cite{FIUV}). In our setting, this means that the curvature of the
Gauduchon connection $\nabla^\tau$ has to satisfy
\begin{equation}\label{extra-HS}
\begin{aligned}
\omega^2\wedge Rm^\tau=0\,,\qquad (Rm^\tau)^{2,0}=(Rm^\tau)^{0,2}=0\,.
\end{aligned}
\end{equation}
%%%Hence, the solutions to the system \eqref{HS} satisfying in addition %%%\eqref{extra-HS}, provide solutions to the heterotic equations of motion. 
Following \cite{PPZ18_4}, we will refer to the whole system given by \eqref{HS} and  \eqref{extra-HS} as the {\em Hull-Strominger-Ivanov system}. 
Explicit solutions to this system were found in \cite{FIUV} on nilmanifolds, and more recently in~\cite{OUV} on solvmanifolds and on the quotient of SL(2,$\C$). 

As an application of Theorem~\ref{thm_C}, given the nilpotent Lie group $N_3$, we prove that if $\nabla_t^\kappa$ is the Strominger-Bismut connection of $H_t$ and the initial metric $\omega_0$ is balanced, then the flow reduces to an ODE of the form of \eqref{MP-intro}. This allows us to prove that the Anomaly flow \eqref{AF-G} always converges to a solution of the Hull-Strominger-Ivanov system when  $\nabla_t^\tau$ is the Strominger-Bismut connection of the metric $\omega_t$ (see Theorem \ref{h3-Bismut-instanton}).

\medskip

It is worth noting that a generalization of the Anomaly flow to Hermitian manifolds of any dimension has been proposed in \cite{PPZ19_2}; while, in \cite{FP19_2} Fei and Phong proved that this generalized Anomaly flow is related to a flow in the Hermitian curvature flows family, that is, a family of parabolic flows introduced by Streets and Tian in \cite{ST}. We mention that the Hermitian curvature flow related to the Anomaly flow has been studied on 2-step nilpotent complex Lie groups by the first named author in \cite{P20}, who proved long-time existence and convergence results. We also refer to \cite{AL, EFV, LPV, PP, P19, PV18, Ust} for some recent results on Hermitian curvature flows in different homogeneous settings.\bigskip

The paper is organized as follows. Section \ref{prelim} is devoted to basic computations on our class of nilpotent Lie groups. In particular, we show that under our assumptions we can always find a preferable real coframe on such Lie groups, namely an {\em adapted basis}. Then, by using this basis, we explicitly compute ${\rm Tr}(Rm^\tau\wedge Rm^\tau)$. In Section \ref{sec-flow} we begin the study of the Anomaly flows and we prove Theorem \ref{thm_A} and Theorem \ref{thm_B}. In Section 4 we focus on the Anomaly flow \eqref{AF-triv}, showing that we can always reduce the flow to the ODE \eqref{MP-intro}. We also study the qualitative behaviour of the Anomaly flow \eqref{AF-triv} depending on the signs of $K_1$, $K_2$. These results will in turn imply Theorem~\ref{thm_conv}. In Section~\ref{sec-flow-example} we study the Anomaly flow \eqref{AF-G} on a special class of nilpotent Lie groups and we prove Theorem~\ref{thm_C}. We also investigate the behaviour of the flow on an explicit example. Finally, 
Appendix~A and Appendix~B
%Appendix \ref{apendiceA} and Appendix \ref{apendiceB} 
contain some technical computations which we used in the paper.

\section{Preliminaries}\label{prelim}

\noindent
In this section, we consider a nilpotent Lie group $G$ of (real) dimension 6 endowed with a left-invariant Hermitian structure $(J,\omega)$. In particular, we will find a coframe which adapts to the Hermitian structure, allowing us to explicitly compute the trace of the curvature for any Hermitian connection in the Gauduchon family. \smallskip

Let $G$ be a 6-dimensional Lie group equipped with a left-invariant complex structure $J$ and a left-invariant Hermitian metric $\omega$. Let $\{ Z_1,Z_2,Z_3\}$ be a left-invariant (1,0)-frame of $(G,J)$, and $\{\zeta^1,\zeta^2,\zeta^3\}$ its dual frame. Then, we can always write
\begin{equation}\label{2forma}
2\, \omega=i (r^2\,\zeta^{1\bar{1}} + s^2\,\zeta^{2\bar{2}} +k^2\,\zeta^{3\bar{3}})+u\,\zeta^{1\bar{2}}-\bar{u}\,\zeta^{2\bar{1}} +v\,\zeta^{2\bar{3}}-\bar{v}\,\zeta^{3\bar{2}}+z\,\zeta^{1\bar{3}}-\bar{z}\,\zeta^{3\bar{1}}\,,
\end{equation}
where $r,s,k\in\RR^\ast$ and $u,v,z\in\C$ satisfy
\begin{equation}\label{F-non-deg-1}
r^2s^2>|u|^2,\quad s^2k^2>|v|^2,\quad r^2k^2>|z|^2\,,
\end{equation}
and
\begin{equation}\label{F-non-deg-2}
8i\,\det \omega=r^2s^2k^2 + 2\,\Re(i\bar u\bar v z) - k^2|u|^2 - r^2|v|^2 - s^2|z|^2>0
\end{equation}
by the positive definiteness of the metric. Here
$$
\det \omega=\frac 18\det \left(\!\!\! \begin{array}{ccc}
i\,r^2 & u & z \\
-\overline{u} & i\,s^2 & v \\
-\overline{z} & -\overline{v}& i\, k^2
\end{array} \!\right)\!.
$$

In the following, we focus on Lie groups $G$ which are 2-step nilpotent and such that  $(G,J)$ is not a \emph{complex Lie group}, that is, the left-invariant complex structure $J$ is not complex parallelizable. Notice that, the latter condition excludes just two cases (the complex torus and the Iwasawa manifold), both of which have already been studied in \cite{PPZ19} (see also \cite{P20}). Additionally, we will suppose that the dimension of the first Chevalley-Eilenberg cohomology group $H^1(\frg)$ of the Lie algebra $\frg$ of $G$ is at least 4. We denote such dimension by $b_1(\frg)$, since by the Nomizu theorem it coincides with the first Betti number of the nilmanifold $\nilm$ obtained as the quotient of $G$ by a co-compact lattice $\Gamma$.

Under these assumptions we are able to study a large family of Hermitian metrics in a unified setting. 
Indeed, by means of \cite[Proposition 2]{U} (see also \cite[Proposition 2.4]{COUV}),
if $J$ is not complex parallelizable and $b_1(\frg)=\dim H^1(\frg)\geq 4$, then there exists a left-invariant (1,0)-coframe $\{\zeta^1,\zeta^2,\zeta^3\}$ on $(G,J)$ satisfying
\begin{equation}\label{J-nilp}
\begin{cases}
d \zeta^1=d\zeta^2=0,\cr d\zeta^3=\rho\, \zeta^{12} +
\zeta^{1\bar{1}} + \lambda\,\zeta^{1\bar{2}} + (x+i\,y)\,\zeta^{2\bar{2}}\,,
\end{cases}
\end{equation}
where $x,y,\lambda\in \mathbb{R}$ with $\lambda\geq 0$, and $\rho\in \{0,1\}$. 
(See Table~\ref{table-sign} in Section~\ref{signosdeKs} for a description of all the real Lie groups supporting such Hermitian structures.)

\subsection{Adapted bases}\hfill\medskip

\noindent 
Our first result shows that we can always find a preferable (real) left-invariant coframe $\{e^1,\ldots,e^6\}$ on $G$ associated to any left-invariant Hermitian structure $(J,\omega)$. In the following, we refer to such a coframe as {\em adapted basis}.

\begin{proposition}\label{adapted-ecus}
Let $G$ be a $2$-step nilpotent Lie group of dimension $6$ with $b_1(\frg)\geq4$, $\frg$ being the Lie algebra of $G$. Let $J$ be a left-invariant non-parallelizable complex structure on $G$ and $\omega$ a left-invariant $J$-Hermitian metric. 
Suppose that $J$ and $\omega$ are defined by \eqref{J-nilp} and \eqref{2forma}, respectively. 
%%%in terms of the (1,0)-coframe $\{\omega^1,\omega^2,\omega^3\}$ on $(G,J)$. 
Then, there exists a (real) left-invariant coframe $\{e^1,\ldots,e^6\}$ on $G$, such that:
\begin{enumerate}\setlength\itemsep{0.5em}
\item[{\rm (a)}]
The complex structure $J$ and the metric $\omega$ satisfy
\begin{equation}\label{adapted-basis}
Je^1=-e^2,\ Je^3=-e^4,\ Je^5=-e^6,\quad\quad \omega=e^{12}+e^{34}+e^{56}.
\end{equation}
\item[{\rm (b)}]
The coframe satisfies the following structure equations
\begin{equation}\label{J-nilp-real-basis}
\left\lbrace\,\,
\begin{aligned}
d e^1 =&\ d e^2=d e^3=d e^4=0\,,\\
d e^5 =&\ \tfrac{k_e}{\Delta_e} \left( \rho+\lambda \right)\, e^{13} - \tfrac{k_e}{\Delta_e} \left( \rho-\lambda \right)\, e^{24} + \tfrac{2\,k_e}{\Delta_e^2} \left( r_e^2\, y - \lambda\, u_{e1} \right) e^{34}\,,\\
d e^6 =&- \tfrac{2k_e}{r_e^2}\, e^{12} + \tfrac{2 k_e u_{e1}}{r_e^2 \Delta_e}\, e^{13} +\tfrac{k_e}{r_e^2 \Delta_e} \left( r_e^2(\rho-\lambda)+2 u_{e2} \right)\, e^{14}\,\\
&  +\tfrac{k_e}{r_e^2 \Delta_e} \left( r_e^2(\rho+\lambda)-2 u_{e2} \right)\, e^{23} +\tfrac{2 k_e u_{e1}}{r_e^2 \Delta_e}\, e^{24}\,,\\
&  - \tfrac{2\, k_e }{r_e^2 \Delta_e^2} \left( r_e^4\, x - \lambda\, r_e^2\, u_{e2} +u_{e1}^2+u_{e2}^2 \right) e^{34}.
\end{aligned}
\right.
\end{equation}
Here, $x,y,\lambda\in \mathbb{R}$ with $\lambda\geq 0$, and $\rho\in\{ 0,1\}$ are the coefficients in \eqref{J-nilp} which define the complex structure $J$, whereas the coefficients $r_e,s_e,k_e,u_{e1},u_{e2} \in \mathbb{R}$, which depends on the coefficients of $\omega$, are given by 
\begin{equation}\label{new-coeffs-1}
r_{e}^2=r^2-\frac{|z|^2}{k^2}\,,\quad s_{e}^2=s^2-\frac{|v|^2}{k^2}\,,\quad k_{e}^2=k^2, \quad  u_{e1}+i\,u_{e2} :=u_{e}=u-\frac{i\bar vz}{k^2}\,.
\end{equation} 
%%%Note that if the metric coefficients $v=z=0$ in \eqref{2forma}, then $r_e=r$,  %%%$s_e=s$, $k_e=k$, $u_e=u$.
%\footnote{From \eqref{new-coeffs-1} we have that if the metric coefficients $v=z=0$ in \eqref{2forma}, then in the adapted basis the structure constants in \eqref{J-nilp-real-basis} are $r_e=r$,  $s_e=s$, $k_e=k$, $u_e=u$.}
\noindent The term $\Delta_e$ in the equations \eqref{J-nilp-real-basis} stands for $\Delta_e:=\sqrt{r_e^2s_e^2-|u_{e}|^2}=\sqrt{\frac{8i\,\det \omega}{k^2}}$.
\item[{\rm (c)}]
The $4$-form $e^{1234}$ is a positive multiple of the $(2,2)$-form $\zeta^{12\bar{1}\bar{2}}$, concretely
\begin{equation}\label{relacion}
e^{1234}=\frac{2i \, \det \omega}{k^2}\, \zeta^{12\bar{1}\bar{2}}.
\end{equation}
%
%%%\item[{\rm (d)}]
%%%If $v=z=0$ in \eqref{2forma}, then $r_e=r$,  $s_e=s$, $k_e=k$ and %%%$u_e=u$.
\end{enumerate}
\end{proposition}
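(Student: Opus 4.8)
\emph{Strategy.} The plan is to produce the adapted coframe by a two-stage $\mathbb C$-linear change of the left-invariant $(1,0)$-coframe $\{\zeta^1,\zeta^2,\zeta^3\}$ that respects the flag determined by the closed forms $\zeta^1,\zeta^2$, and only then to pass to real and imaginary parts. The guiding principle is that any recombination inside $\mathrm{span}_{\C}\{\zeta^1,\zeta^2\}$ stays closed, and that adding \emph{constant} multiples of $\zeta^1,\zeta^2$ to $\zeta^3$ leaves $d\zeta^3$ unchanged, because $d\zeta^1=d\zeta^2=0$. Hence we are free to replace $\zeta^3$ and to mix $\zeta^1,\zeta^2$, and the only information to track is how the single nonzero differential $d\zeta^3$ of \eqref{J-nilp} re-expresses itself in the final real basis. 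Writing $2\omega=i\,H_{a\bar b}\,\zeta^{a\bar b}$ with $H$ the positive-definite Hermitian matrix read off from \eqref{2forma}, the whole construction is a Gram--Schmidt factorization of $H$ subordinate to this flag.

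\emph{Step 1 (eliminating $v,z$).} First I would complete the square in $\zeta^3$, equivalently take the Schur complement of the $(3,3)$-entry $H_{3\bar 3}=k^2$. Setting $\tilde\zeta^3:=\zeta^3+\tfrac{i\bar z}{k^2}\,\zeta^1+\tfrac{i\bar v}{k^2}\,\zeta^2$ makes $\tilde\zeta^3$ orthogonal to $\zeta^1,\zeta^2$ for $H$, and since the added terms are constant multiples of the closed forms $\zeta^1,\zeta^2$ one has $d\tilde\zeta^3=d\zeta^3$. A direct check shows the metric then becomes block diagonal, $2\omega=i\bigl(r_e^2\,\zeta^{1\bar1}+s_e^2\,\zeta^{2\bar2}+u_e\,\zeta^{1\bar2}-\bar u_e\,\zeta^{2\bar1}\bigr)+i\,k^2\,\tilde\zeta^{3\bar3}$, where the reduced coefficients are exactly the entries of the Schur complement and coincide with \eqref{new-coeffs-1}. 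In particular $\det H=k^2(r_e^2 s_e^2-|u_e|^2)$, which already yields $\Delta_e^2=8i\det\omega/k^2$.

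\emph{Step 2 (diagonalizing the $2\times2$ block and going real).} Next I would orthonormalize the remaining positive-definite Hermitian $2\times2$ block on $\mathrm{span}_\C\{\zeta^1,\zeta^2\}$ by a lower-triangular Gram--Schmidt transformation into unit forms $\eta^1,\eta^2$, and set $\eta^3:=k\,\tilde\zeta^3$, so that $2\omega=i(\eta^{1\bar1}+\eta^{2\bar2}+\eta^{3\bar3})$. As $\eta^1,\eta^2$ are combinations of the closed forms $\zeta^1,\zeta^2$ they remain closed. Defining the real coframe by $\eta^{\,j}=e^{2j-1}+i\,e^{2j}$ gives $Je^{1}=-e^{2}$, $Je^{3}=-e^{4}$, $Je^{5}=-e^{6}$ and $\omega=e^{12}+e^{34}+e^{56}$, that is (a), together with $de^1=\dots=de^4=0$. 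To obtain (b) it then remains to expand $d\eta^3=k\,d\zeta^3=k\bigl(\rho\,\zeta^{12}+\zeta^{1\bar1}+\lambda\,\zeta^{1\bar2}+(x+iy)\,\zeta^{2\bar2}\bigr)$ in the real basis: one inverts the $2\times2$ transformation to express $\zeta^1,\zeta^2$, hence $\zeta^{12},\zeta^{1\bar1},\zeta^{1\bar2},\zeta^{2\bar2}$, in terms of $\eta^1=e^{1}+ie^{2}$ and $\eta^2=e^{3}+ie^{4}$, and reads off $de^5=\Re(d\eta^3)$, $de^6=\Im(d\eta^3)$. This bookkeeping, fed by the explicit Gram--Schmidt coefficients $r_e,s_e,u_e,\Delta_e,k_e$, produces the structure constants of \eqref{J-nilp-real-basis}.

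\emph{Step 3 (volume form) and main obstacle.} Part (c) is then immediate from the determinant bookkeeping: from $\eta^{\,j}=e^{2j-1}+ie^{2j}$ one gets $e^{1234}=\tfrac14\,\eta^{12\bar1\bar2}$, and since $\eta^1,\eta^2$ arise from $\zeta^1,\zeta^2$ through a matrix $P'$ with $|\det P'|^2=r_e^2 s_e^2-|u_e|^2=\Delta_e^2$, one obtains $e^{1234}=\tfrac14\Delta_e^2\,\zeta^{12\bar1\bar2}=\tfrac{2i\det\omega}{k^2}\,\zeta^{12\bar1\bar2}$, which is \eqref{relacion}. I expect the only genuinely laborious part to be Step 2: making a \emph{specific} choice of the diagonalizing transformation and of phases so that the real and imaginary parts of $d\eta^3$ assemble into exactly the combinations $e^{13},e^{24},e^{14},e^{23},e^{34}$ with the stated coefficients. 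The conceptual content, namely the existence of an orthonormal $J$-adapted coframe, is guaranteed by the positive-definiteness of $H$; the work lies in pinning down the normalized frame that reproduces \eqref{J-nilp-real-basis} verbatim.
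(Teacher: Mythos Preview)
Your strategy is exactly the paper's: first shift $\zeta^3$ by constant multiples of the closed forms $\zeta^1,\zeta^2$ to make the metric block-diagonal (the Schur complement of the $(3,3)$-entry), then perform a triangular Gram--Schmidt on the $2\times2$ block and pass to real and imaginary parts; the paper even records the explicit second step as $\tau^1=r_e\,\zeta^1+\tfrac{i\bar u_e}{r_e}\,\zeta^2$, $\tau^2=\tfrac{\Delta_e}{r_e}\,\zeta^2$, $\tau^3=k_e\,\sigma^3$, which is the normalization you anticipate. One small slip to fix: the correct shift is $\sigma^3=\zeta^3-\tfrac{iz}{k^2}\zeta^1-\tfrac{iv}{k^2}\zeta^2$ (with $z,v$, not $\bar z,\bar v$), as your own Schur-complement reasoning in fact forces---your formula with conjugates does not kill the $\zeta^{1\bar3},\zeta^{2\bar3}$ terms.
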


\begin{proof}
Starting from a left-invariant $(1,0)$-coframe $\{\zeta^1,\,\zeta^2,\,\zeta^3\}$ satisfying 
\eqref{J-nilp} and a generic $J$-Hermitian metric $\omega$ in the form of \eqref{2forma}, we first consider the left-invariant (1,0)-coframe
\begin{equation}\label{J-nilp-sigma-anterior}
\sigma^1:=\zeta^1\,,\quad\sigma^2:=\zeta^2\,,\quad\sigma^3:=\zeta^3-\frac{iv}{k^2}\,\zeta^2-\frac{iz}{k^2}\,\zeta^1\,.
\end{equation}
This map defines an automorphism of the complex structure $J$ which preserves the complex structure equations \eqref{J-nilp}, i.e. the $(1,0)$-coframe $\{\sigma^1,\,\sigma^2,\,\sigma^3\}$ still satisfies 
%%%\begin{equation}\label{J-nilp-sigma}
%%%\begin{cases}
%%%d \sigma^1=d\sigma^2=0,\cr d\sigma^3=\rho\, \sigma^{12} +
%%%\sigma^{1\bar{1}} + \lambda\,\sigma^{1\bar{2}} + D\,\sigma^{2\bar{2}},%%%\cr
%%%\end{cases}
%%%\end{equation}
\begin{equation}\label{J-nilp-sigma}
d \sigma^1=d\sigma^2=0,\quad\quad 
d\sigma^3=\rho\, \sigma^{12} +
\sigma^{1\bar{1}} + \lambda\,\sigma^{1\bar{2}} + (x+i\,y)\,\sigma^{2\bar{2}}.
\end{equation}
With respect to this coframe, the Hermitian metric $\omega$ can be written as
\begin{equation}\label{J-nilp-sigma-posterior}
2\, \omega=i\,(r_{\sigma}^2\,\sigma^{1\bar 1} + s_{\sigma}^2\,\sigma^{2\bar2} +k_{\sigma}^2\,\sigma^{3\bar3}) + u_{\sigma}\,\sigma^{1\bar2} - \overline{u}_{\sigma}\,\sigma^{2\bar1},
\end{equation}
where the metric coefficients are given by
\begin{equation}\label{new-coeffs}
r_{\sigma}^2=r^2-\frac{|z|^2}{k^2}\,,\quad s_{\sigma}^2=s^2-\frac{|v|^2}{k^2}\,,\quad k_{\sigma}^2=k^2, \quad u_{\sigma}=u-\frac{i\bar vz}{k^2}\,.
\end{equation} 
Notice that from \eqref{F-non-deg-1} we have 
$r_{\sigma}^2, s_{\sigma}^2, k_{\sigma}^2 >0$ and  $r_{\sigma}^2s_{\sigma}^2>|u_{\sigma}|^2$. 

Let us now consider the left-invariant $(1,0)$-coframe
\begin{equation}\label{J-taus}
\tau^1:=r_{\sigma}\, \sigma^1+\frac{i\,\overline{u}_{\sigma}}{r_{\sigma}}\, \sigma^2,\quad \tau^2:=\frac{\Delta_{\sigma}}{r_{\sigma}}\, \sigma^2,\quad \tau^3:=k_{\sigma}\, \sigma^3\,,
\end{equation}
where $\Delta_{\sigma}:=\sqrt{r_{\sigma}^2 s_{\sigma}^2-|u_{\sigma}|^2}$.
%%%with $\Delta_{\sigma}:=\sqrt{r_{\sigma}^2 s_{\sigma}^2-|u_{\sigma}|^2}$.
Then, a direct calculation yields that $\omega$ can be written as
$$
\omega=\frac{i}{2}\,\tau^{1\bar 1} + \frac{i}{2}\,\tau^{2\bar 2} +\frac{i}{2}\,\tau^{3\bar 3}
$$
and, by using \eqref{J-nilp-sigma}, the complex structure equations become
\begin{equation}\label{J-nilp-tau}
\left\{
\begin{array}{lcl}
d \tau^1 \!\!\!&=&\!\!\! d\tau^2=0,\\[6pt] 
d\tau^3 \!\!\!&=&\!\!\!  \rho\,\frac{k_{\sigma}}{\Delta_{\sigma}}\, \tau^{12} + \frac{k_{\sigma}}{r_{\sigma}^2}\, \tau^{1\bar{1}} + \frac{k_{\sigma}}{r_{\sigma}^2\Delta_{\sigma}} \left( i u_{\sigma}+\lambda r_{\sigma}^2 \right)\,\tau^{1\bar{2}} - \frac{i k_{\sigma} \overline{u}_{\sigma}}{r_{\sigma}^2\Delta_{\sigma}}\,\tau^{2\bar{1}}\\[7pt]
\!\!\!&&\!\!\!  + \frac{k_{\sigma}}{r_{\sigma}^2\Delta_{\sigma}^2} 
\left( |u_{\sigma}|^2 - i r_{\sigma}^2 \overline{u}_{\sigma} \lambda + r_{\sigma}^4 x+i\, r_{\sigma}^4 y \right)\tau^{2\bar{2}}.
\end{array}
\right.
\end{equation}
Finally, let us consider the real left-invariant coframe $\{ e^1,\ldots,e^6 \}$ on $G$ given by
\begin{equation}\label{es-taus}
e^1+i\,e^2:=\tau^1,\quad\  e^3+i\,e^4:=\tau^2,\quad\  e^5+i\,e^6:=\tau^3.
\end{equation}
Then, with respect to this real coframe, \eqref{adapted-basis} holds and hence (a) is proved.\smallskip

Now, let us set $u_{\sigma 1}+i\, u_{\sigma 2}:=u_{\sigma}$. By means of \eqref{J-nilp-tau}, a direct computation yields that the structure equations in terms of the real coframe $\{ e^1,\ldots,e^6 \}$ are given by
$$
\left\{
\begin{array}{rcl}
d e^1 \!\!\!&=&\!\!\!  d e^2=d e^3=d e^4=0,\\[6pt]
\frac{\Delta_{\sigma}}{k_{\sigma}}\, d e^5 \!\!\!&=&\!\!\! \left( \rho+\lambda \right) e^{13} 
- \left( \rho-\lambda \right) e^{24} 
+ \frac{2}{\Delta_{\sigma}} 
\left( r_{\sigma}^2\, y - \lambda\, u_{\sigma 1} \right) e^{34},\\[7pt]
\frac{r_{\sigma}^2 \Delta_{\sigma}}{k_{\sigma}}\, d e^6 \!\!\!&=&\!\!\! -2 \Delta_{\sigma}\, e^{12} 
+ 2 u_{\sigma 1}\, e^{13} 
+ \left( r_{\sigma}^2(\rho-\lambda)+2 u_{\sigma 2} \right) e^{14} 
+ \left( r_{\sigma}^2(\rho+\lambda)-2 u_{\sigma 2} \right) e^{23}\\[7pt]
\!\!\!&&\!\!\!
+2 u_{\sigma 1}\, e^{24}  
- \frac{2}{\Delta_{\sigma}} 
\left( r_{\sigma}^4\, x - \lambda r_{\sigma}^2 u_{\sigma 2} +u_{\sigma 1}^2+u_{\sigma 2}^2 \right) e^{34}\,.
\end{array}
\right.
$$
Therefore, setting $r_e:=r_{\sigma}$, $s_e:=s_{\sigma}$, $k_e:=k_{\sigma}$, $u_{e}:=u_{\sigma}$ (thus, $u_{e1}:=u_{\sigma 1}$ and $u_{e2}:=u_{\sigma 2}$) 
and $\Delta_{e}=\Delta_{\sigma}$, we get \eqref{J-nilp-real-basis} and (b) follows. Notice that \eqref{new-coeffs-1} is precisely 
\eqref{new-coeffs} in the new notation. Moreover, from \eqref{new-coeffs-1} we get
$$
\Delta_{e}^2=r_{e}^2s_{e}^2-|u_{e}|^2=\frac{1}{k^2} \left( r^2s^2k^2 + 2\,\Re(i\bar u\bar v z)-k^2|u|^2 - r^2|v|^2 - s^2|z|^2 \right)=\frac{8i\,\det \omega}{k^2},
$$
due to \eqref{F-non-deg-2}. 

Finally, in order to prove (c), it is enough to note that 
$
4\,e^{1234}=\tau^{12\bar{1}\bar{2}}=\Delta_{e}^2\, \zeta^{12\bar{1}\bar{2}}$.
\end{proof}

\begin{remark}
{\rm
In the balanced case, adapted bases were found in \cite[Theorem 2.11]{UV15} by considering a partition of the space of metrics into the subsets ``$u=0$'' and ``$u\not=0$'' for the given left-invariant metrics \eqref{2forma}. However, the study 
of the Anomaly flow requires to consider a global setting involving the whole space of left-invariant Hermitian metrics $\omega$ on $(G,J)$, as it has been obtained in our 
Proposition~\ref{adapted-ecus}.
}
\end{remark}

\subsection{Trace of the curvature}\label{sub-trace}\hfill\medskip

\noindent 
In the following, we explicitly compute the trace of the curvature of a Hermitian connection in the Gauduchon family $\{\nabla^{\tau}\}_{\tau\in \mathbb{R}}$ for our class of nilpotent Lie groups. We will adopt the convention used in \cite{FIUV} (see also \cite{OUV}). \medskip

Given a smooth manifold $M$ equipped with a Hermitian structure $(J,\omega)$, 
a connection $\nabla$ on the tangent bundle $TM$ is said to be a 
{\em Hermitian connection} if 
$\nabla J=0$ and $\nabla \omega=0$.
Gauduchon introduced in \cite{Gaud} a 1-parameter family $\{\nabla^{\tau}\}_{\tau\in \mathbb{R}}$ of {\em canonical} Hermitian connections, which can be defined via
\begin{equation}\label{family_G}
\omega(J(\nabla^{\tau}_XY),Z) = \omega(J(\nabla^{LC}_X Y), Z)  + \frac{1-\tau}{4}\,T(X,Y,Z) + \frac{1+\tau}{4}\,C(X,Y,Z) ,
\end{equation}
where $\nabla^{LC}$ is the Levi-Civita connection of the Riemannian manifold $(M,\omega)$, and $T$ and $C$ are given by
$$
T(\cdot, \cdot, \cdot):=-d\omega(J\cdot,J\cdot,J\cdot) \qquad\text{and}\qquad
C(\cdot, \cdot, \cdot):=d\omega(J\cdot, \cdot, \cdot).
$$ 
These connections are distinguished by the properties of their torsion tensors and, in view of \eqref{family_G}, the Chern connection $\nabla^c$ and the Strominger-Bismut connection $\nabla^+$ can be recovered by taking $\tau = 1$ and $\tau= -1$, respectively.\smallskip

Now, let us consider a 6-dimensional Lie group $G$ equipped with a left-invariant Hermitian structure $(J,\omega)$. Let also $\{e^1,\ldots,e^6\}$ be 
%%%an invariant real coframe on $G$ adapted 
an adapted basis to the Hermitian structure on $G$, i.e. $J$ and $\omega$ are expressed by \eqref{adapted-basis}.
The {\em connection $1$-forms} $\sigma^i_j$ associated to any linear connection $\nabla$ are
$$
\sigma^i_j(e_k) := \omega(J(\nabla_{e_k}e_j), e_i),
$$
that is, $\nabla_X e_j = \sigma^1_j(X)\, e_1 +\cdots+ \sigma^6_j(X)\, e_6$; while, the {\em curvature $2$-forms} $\Omega^i_j$ of $\nabla$ are given by 
\begin{equation}\label{curvature}
\Omega^i_j := d \sigma^i_j + \sum_{1\leq k \leq 6}\sigma^i_k\wedge \sigma^k_j.
\end{equation}
Then, the trace of the 4-form $\Omega\wedge\Omega$ can be defined via
\begin{equation}\label{traceRmRm}
{\rm Tr}(\Omega\wedge\Omega)= \sum_{1\leq i<j\leq 6} \Omega^i_j\wedge\Omega^i_j.
\end{equation}

Remarkably, the connection 1-forms $(\sigma^\tau)^i_j$ associated to a canonical connection $\nabla^\tau$ in Gauduchon family can be explicitly obtained as follows. Let us denote by $c_{ij}^k$ the structure constants of $G$ with respect to $\{e^1,\ldots,e^6\}$, i.e.
 $$
d\,e^k = \sum_{1\leq i<j \leq 6} c_{ij}^k \, e^{i j},\quad\quad k=1,\ldots,6.
$$
Since $d e^k(e_i,e_j)= -e^k([e_i,e_j])$, the connection 1-forms $(\sigma^{LC})^i_j$ of the Levi-Civita connection satisfy
$$
(\sigma^{LC})^i_j(e_k) = -\frac12 \left( \omega(Je_i,[e_j,e_k]) - \omega(Je_k,[e_i,e_j]) +\omega(Je_j,[e_k,e_i]) \right)=\frac12(c^i_{jk}-c^k_{ij}+c^j_{ki}),
$$
and hence, by means of \eqref{family_G}, the connection 1-forms $(\sigma^{\tau})^i_j$ of  ${\nabla^{\tau}}$ are given by
$$\begin{aligned}
 (\sigma^{\tau})^i_j(e_k)=&(\sigma^{LC})^i_j(e_k) - \frac{1-\tau}{4}\, T(e_i,e_j,e_k) - \frac{1+\tau}{4}\, C(e_i, e_j, e_k) \\ 
 =&\frac12(c^i_{jk}-c^k_{ij}+c^j_{ki}) -\frac{1-\tau}{4}\, T(e_i, e_j, e_k) - \frac{1+\tau}{4}\, C(e_i,e_j,e_k)\\ 
 =&\frac12(c^i_{jk}-c^k_{ij}+c^j_{ki}) +\frac{1-\tau}{4}\,d\omega(J e_i, J e_j, J e_k) - \frac{1+\tau}{4}\, d\omega(J e_i,e_j,e_k).
\end{aligned}$$

Henceforth, the curvature of the Gauduchon connection $\nabla^{\tau}$ will be  denoted indistinctly by $Rm^{\tau}$ or $\Omega^{\tau}$. 
We are now in a position to compute the trace of $\Omega^{\tau}\!\wedge\Omega^{\tau}$ for our class of nilpotent Lie groups. In our next proposition, we show that the trace is of a special type. This will allow us to substantially simplify the Anomaly flow equations.

\begin{proposition}\label{traza-tau}
Let $G$ be a $2$-step nilpotent Lie group of dimension $6$ with $b_1(\frg)\geq4$, $\frg$ being the Lie algebra of $G$. Let $J$ be a left-invariant non-parallelizable complex structure on $G$ and $\omega$ a left-invariant $J$-Hermitian metric. 
Suppose that $J$ and $\omega$ are defined, respectively, by \eqref{J-nilp} and \eqref{2forma} in terms of a left-invariant $(1,0)$-coframe $\{ \zeta^l\}_{l=1}^3$. Then, for any Gauduchon connection $\nabla^{\tau}$, the trace of its curvature satisfies 
$$
{\rm Tr}(\Omega^{\tau}\!\wedge\Omega^{\tau}) = C \, \zeta^{12\bar{1}\bar{2}},
$$
where $C=C(\rho,\lambda,x,y;r,s,k,u,v,z;\tau)$ is a constant depending both on the Hermitian structure and the connection. 
More precisely, we have
\begin{equation*}\begin{aligned}
{\rm Tr}(\Omega^{\tau}\!\wedge\,&\Omega^{\tau}) = -\displaystyle{\frac{(\tau-1)\,k_e^4}{2(r_e^2 s_e^2-|u_e|^2)^2}} \, \Big\{\\
&\Big[ (\rho-\lambda^2+5x)  (s_e^4 \!-\! 2 \lambda s_e^2 u_{e2} + 2 x |u_e|^2) - 3 \lambda^2 x (u_{e1}^2\!-\!u_{e2}^2) -6 \lambda u_{e1} y (s_e^2\!-\!\lambda u_{e2}) + 6 y^2 |u_e|^2 \\
&+\tau (\rho+\lambda^2-2x) (s_e^4 - 2 \lambda s_e^2 u_{e2} + 2 x |u_e|^2) \\
&+\tau^2 \Big(  (-2\rho+x) (s_e^4 \!-\!2 \lambda s_e^2 u_{e2} +2 x |u_e|^2) -\lambda^2 x (u_{e1}^2\!-\!u_{e2}^2) -2 \lambda u_{e1} y (s_e^2\!-\!\lambda u_{e2}) + 2 y^2 |u_e|^2  \Big) \Big]\\
&+r_e^2\, \lambda \Big[ (\rho-\lambda^2+2x) (\lambda s_e^2 - 2 u_{e2} x - 2 u_{e1} y) - 6 u_{e2}(x^2 + y^2)\\
&\qquad\quad+\tau  (\rho+\lambda^2-2x) (\lambda s_e^2 - 2 u_{e2} x - 2 u_{e1} y) \\
&\qquad\quad+\tau^2 \Big( \!\!-2\rho (\lambda s_e^2 -2 u_{e2} x -2 u_{e1} y) -2  u_{e2} (x^2 + y^2) \Big) \Big]\\
&+r_e^4 (x^2 +y^2) \Big[ (\rho - \lambda^2+5x) +\tau \, (\rho + \lambda^2-2x)  +\tau^2 \, (-2\rho+x) \Big]  \, \Big\} \, \zeta^{12\bar{1}\bar{2}}\,,
\end{aligned}\end{equation*}
where $r_e,s_e,k_e$ and $u_{e}= u_{e1}+i\,u_{e2}$ are given in Proposition \ref{adapted-ecus}.
\end{proposition}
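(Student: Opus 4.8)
The plan is to carry out the entire computation in the adapted real coframe $\{e^1,\dots,e^6\}$ furnished by Proposition~\ref{adapted-ecus}, where $\omega=e^{12}+e^{34}+e^{56}$ and the structure equations \eqref{J-nilp-real-basis} are fully explicit. First I would read off the structure constants $c^k_{ij}$ (defined by $de^k=\sum_{i<j}c^k_{ij}\,e^{ij}$) directly from \eqref{J-nilp-real-basis}: since $de^1=\dots=de^4=0$, the only nonzero ones are the $c^5_{ij}$ and $c^6_{ij}$ attached to the monomials appearing in $de^5$ and $de^6$. I would also record $d\omega=d(e^{56})=de^5\wedge e^6-e^5\wedge de^6$, which is the only datum needed to evaluate the torsion term $T(\cdot,\cdot,\cdot)=-d\omega(J\cdot,J\cdot,J\cdot)$ and the term $C(\cdot,\cdot,\cdot)=d\omega(J\cdot,\cdot,\cdot)$.

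With these in hand, the connection $1$-forms of $\nabla^\tau$ are the constant-coefficient forms $(\sigma^\tau)^i_j=\sum_k(\sigma^\tau)^i_j(e_k)\,e^k$ with
$$(\sigma^\tau)^i_j(e_k)=\tfrac12\bigl(c^i_{jk}-c^k_{ij}+c^j_{ki}\bigr)+\tfrac{1-\tau}{4}\,d\omega(Je_i,Je_j,Je_k)-\tfrac{1+\tau}{4}\,d\omega(Je_i,e_j,e_k),$$
which I would evaluate using $Je_1=-e_2,\ Je_3=-e_4,\ Je_5=-e_6$ (dual to \eqref{adapted-basis}). Metric compatibility gives $(\sigma^\tau)^i_j=-(\sigma^\tau)^j_i$, which halves the bookkeeping. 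I would then assemble the curvature $2$-forms $\Omega^i_j=d\sigma^i_j+\sum_k\sigma^i_k\wedge\sigma^k_j$ and finally the $4$-form ${\rm Tr}(\Omega^\tau\wedge\Omega^\tau)=\sum_{i<j}\Omega^i_j\wedge\Omega^i_j$.

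The reason the answer must be a multiple of $\zeta^{12\bar1\bar2}$ is best organized through the splitting $\frg^*=V\oplus W$ with $V=\langle e^1,\dots,e^4\rangle$ and $W=\langle e^5,e^6\rangle$, for which $d|_V=0$ and $d(W)\subseteq\Lambda^2V$. Because $d\sigma^i_j$ sees only the $W$-part of $\sigma^i_j$ and lands in $\Lambda^2V$, each curvature form splits as $\Omega^i_j=P^i_j+Q^i_j+R^i_j$ with $P^i_j\in\Lambda^2V$, $Q^i_j\in V\wedge W$ and $R^i_j\in\Lambda^2W=\langle e^{56}\rangle$; wedging and using $\Lambda^5V=0$ and $\Lambda^3W=0$, one sees that $\Omega^i_j\wedge\Omega^i_j$ can only feed $\Lambda^4V=\langle e^{1234}\rangle$, $\Lambda^3V\wedge W$, and $\Lambda^2V\wedge\Lambda^2W$. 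The pure $\Lambda^4V$ term is automatically a multiple of $\zeta^{12\bar1\bar2}$ via \eqref{relacion}. The hard part, and the step I expect to be the genuine obstacle, is to prove that after summing over $i<j$ the two impure contributions, namely $\sum_{i<j}P^i_j\wedge Q^i_j\in\Lambda^3V\wedge W$ and $\sum_{i<j}\bigl(2P^i_j\wedge R^i_j+Q^i_j\wedge Q^i_j\bigr)\in\Lambda^2V\wedge\Lambda^2W$, cancel identically; this cancellation is forced by the Hermitian ($J$- and $\omega$-compatibility) constraints together with the $2$-step structure, and tracking it is where the precise coefficients matter. Once proportionality is secured, I would substitute \eqref{relacion} and the relations \eqref{new-coeffs-1} with $\Delta_e^2=r_e^2s_e^2-|u_e|^2$, and collect the coefficient. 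I expect the whole expression to carry an overall factor $(\tau-1)$, equivalently ${\rm Tr}(\Omega^c\wedge\Omega^c)=0$ for the Chern connection $\tau=1$, which provides a useful sanity check; the remaining work is a routine but lengthy polynomial simplification, which I would organize by powers of $\tau$ exactly as in the stated formula.
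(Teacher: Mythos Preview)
Your approach is essentially the paper's: work in the adapted coframe $\{e^l\}$ of Proposition~\ref{adapted-ecus}, write out the connection $1$-forms $(\sigma^\tau)^i_j$ from the structure constants and $d\omega$, form the curvature $2$-forms, and sum. The paper does exactly this, listing the $(\sigma^\tau)^i_j$ explicitly (together with the $J$-compatibility relations $(\sigma^\tau)^2_3=-(\sigma^\tau)^1_4$, $(\sigma^\tau)^2_4=(\sigma^\tau)^1_3$, etc.), deferring the curvature $2$-forms to an appendix, and then stating that the trace formula follows from a long direct calculation.

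Your $V\oplus W$ bookkeeping is a useful extra layer the paper does not spell out, but you slightly misjudge where the work lies. Because the connection forms themselves are block-graded---$(\sigma^\tau)^a_b\in W$ when $a,b\in\{1,2,3,4\}$, $(\sigma^\tau)^a_b\in V$ when exactly one index lies in $\{5,6\}$, and $(\sigma^\tau)^5_6=0$---each curvature form $\Omega^i_j$ lies either entirely in $\Lambda^2V\oplus\Lambda^2W$ or entirely in $V\wedge W$, never mixing. Hence $P^i_j\wedge Q^i_j=0$ termwise and the $\Lambda^3V\wedge W$ contribution vanishes for free; this is not the ``hard part''. The genuine cancellation is the one in $\Lambda^2V\wedge\Lambda^2W$, i.e.\ that $\sum_{i<j}(2P^i_j\wedge R^i_j+Q^i_j\wedge Q^i_j)=0$, and the paper establishes it by brute computation rather than by any a priori Hermitian/two-step argument, so you should expect to verify it the same way. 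Your expectation of an overall $(\tau-1)$ factor is confirmed by the final formula.
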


\begin{proof}
By means of Proposition \ref{adapted-ecus}, there always exists an adapted basis $\{e^1,\ldots,e^6\}$ on the Lie group $G$ for the left-invariant Hermitian structure $(J,\omega)$. Therefore, let $(\sigma^{\tau})^i_j$ be the connection 1-forms of the Gauduchon connection $\nabla^{\tau}$ in this basis. Since $\nabla^{\tau}$ is Hermitian, then the forms $(\sigma^\tau)^i_j$ satisfy the condition $(\sigma^{\tau})^i_j = - (\sigma^{\tau})^j_i$. Moreover, a direct  computation yields the following connection 1-forms:
$$
\begin{aligned}
(\sigma^{\tau})^1_2=& -\tfrac{k_e}{r_e^2}{\scriptstyle(\tau-1)}\, e^6\,,\\[3pt]
(\sigma^{\tau})^1_3=&\ \tfrac{\lambda\, k_e}{2\sqrt{r_e^2s_e^2-|u_e|^2}}{\scriptstyle(\tau-1)}\,  e^5 + \tfrac{k_e\, u_{e1}}{r_e^2\sqrt{r_e^2s_e^2-|u_e|^2}}{\scriptstyle(\tau-1)} \, e^6\,,\\[3pt]
(\sigma^{\tau})^1_4=& -\tfrac{k_e (\lambda\, r_e^2-2\,u_{e2})}{2r_e^2\sqrt{r_e^2s_e^2-|u_e|^2}}{\scriptstyle(\tau-1)} \,e^6\,,\\[3pt]
(\sigma^{\tau})^1_5=& -\tfrac{k_e}{2r_e^2}{\scriptstyle(\tau+1)}\, e^1 + \tfrac{k_e}{2r_e^2\sqrt{r_e^2s_e^2-|u_e|^2}}{\scriptstyle\big( \rho r_e^2 (\tau - 1) + (u_{e2}-\lambda r_e^2) (\tau + 1) \big)}\, e^3- \tfrac{k_e\, u_{e1}}{2r_e^2\sqrt{r_e^2s_e^2-|u_e|^2}}{\scriptstyle(\tau+1)}\, e^4\,,\\[3pt]
(\sigma^{\tau})^1_6=&\ \tfrac{k_e}{2r_e^2}{\scriptstyle(\tau+1)}\, e^2 - \tfrac{k_e\, u_{e1}}{2r_e^2\sqrt{r_e^2s_e^2-|u_e|^2}}{\scriptstyle(\tau+1)}\, e^3+ \tfrac{k_e}{2r_e^2\sqrt{r_e^2s_e^2-|u_e|^2}}{\scriptstyle\big( \rho r_e^2 (\tau - 1) - (u_{e2}-\lambda r_e^2) (\tau + 1) \big)}\, e^4\,,\\[3pt]
(\sigma^{\tau})^3_4=& -\tfrac{k_e\,(\lambda\, u_{e1}-r_e^2 y)}{r_e^2s_e^2-|u_e|^2}{\scriptstyle(\tau-1)}\, e^5 -\tfrac{k_e\,(|u_e|^2 -\lambda\,r_e^2 u_{e2} +r_e^4 x)}{r_e^2(r_e^2s_e^2-|u_e|^2)}{\scriptstyle(\tau-1)}\, e^6\,,\\[3pt]
(\sigma^{\tau})^3_5=& -\tfrac{k_e\,(\rho\,r_e^2\, (\tau-1)-u_{e2}\,(\tau+1))}{2r_e^2\sqrt{r_e^2s_e^2-|u_e|^2}}\, e^1 +\tfrac{k_e\,u_{e1} (\tau+1)}{2r_e^2\sqrt{r_e^2s_e^2-|u_e|^2}}\, e^2-\tfrac{k_e\,(|u_e|^2 -\lambda\,r_e^2 u_{e2} +r_e^4 x)}{2r_e^2(r_e^2s_e^2-|u_e|^2)}{\scriptstyle(\tau+1)}\, e^3 \\
& +\tfrac{k_e\,(\lambda\, u_{e1}-r_e^2 y)}{2(r_e^2s_e^2-|u_e|^2)}{\scriptstyle(\tau+1)}\, e^4\,,\\[3pt]
(\sigma^{\tau})^3_6=&\ \tfrac{k_e\,u_{e1} (\tau+1)}{2r_e^2\sqrt{r_e^2s_e^2-|u_e|^2}}\, e^1-\tfrac{k_e\,(\rho\,r_e^2\, (\tau-1)+u_{e2}\,(\tau+1))}{2r_e^2\sqrt{r_e^2s_e^2-|u_e|^2}}\, e^2 +\tfrac{k_e\,(\lambda\, u_{e1}-r_e^2 y)}{2(r_e^2s_e^2-|u_e|^2)}{\scriptstyle(\tau+1)}\, e^3 \\ 
& +\tfrac{k_e\,(|u_e|^2 -\lambda\,r_e^2 u_{e2} +r_e^4 x)}{2r_e^2(r_e^2s_e^2-|u_e|^2)}{\scriptstyle(\tau+1)}\, e^4\,,\\[3pt]
(\sigma^{\tau})^5_6=&0\,,
\end{aligned}
$$
together with the following relations:
$$
\begin{aligned}
&(\sigma^{\tau})^2_3 = - (\sigma^{\tau})^1_4\,, \quad (\sigma^{\tau})^2_4 = (\sigma^{\tau})^1_3\,,\ \   &(\sigma^{\tau})^2_5 = - (\sigma^{\tau})^1_6\,,\quad &(\sigma^{\tau})^2_6 = (\sigma^{\tau})^1_5\,,\\
&(\sigma^{\tau})^4_5 = - (\sigma^{\tau})^3_6\,, \quad (\sigma^{\tau})^4_6 = (\sigma^{\tau})^3_5\,.
\end{aligned}
$$
Finally, by using the curvature 2-forms $(\Omega^\tau)^i_j$ explicitly given in Appendix~A
%Appendix \ref{apendiceA} 
and equation \eqref{traceRmRm}, the result follows from a long but direct calculation.
\end{proof}

\medskip

Let $G$ be a nilpotent Lie group endowed with a left-invariant complex structure $J$ defined by \eqref{J-nilp}. Moreover, let us consider the following left-invariant closed (3,0)-form 
\begin{equation}\label{Psi}
\Psi:=\zeta^1\wedge\zeta^2\wedge\zeta^3.
\end{equation}
Given the Chern connection $\nabla^c$ of the metric $\omega$, i.e. $\tau=1$ in the Gauduchon family, one always has $\nabla^{c}\Psi=0$; whereas, by the proof of the previous proposition, 
one gets

\begin{corollary}\label{cor-traza-tau}
Under the hypotheses of Proposition~\ref{traza-tau}. If $\tau\neq1$, then
$\nabla^{\tau}\Psi=0$ if and only if the Hermitian metric $\omega$ is balanced.
\end{corollary}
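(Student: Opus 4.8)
The plan is to compute $\nabla^\tau\Psi$ directly and determine precisely when it vanishes, exploiting the fact that $\Psi$ is a $(3,0)$-form built from the coframe $\{\zeta^l\}$. Since $J$ and $\omega$ are defined by \eqref{J-nilp} and \eqref{2forma}, I would first express the covariant derivative of $\Psi$ in terms of the connection $1$-forms $(\sigma^\tau)^i_j$ already listed explicitly in the proof of Proposition~\ref{traza-tau}. The key observation is that $\Psi = \zeta^{123}$ differs from $\tau^{123}$ (equivalently from the decomposable $(3,0)$-form associated to the adapted frame $e^1+ie^2$, $e^3+ie^4$, $e^5+ie^6$) only by a constant factor $r_\sigma\,\Delta_\sigma\,k_\sigma / r_\sigma = \Delta_\sigma k_\sigma$ coming from \eqref{J-taus}; since $\nabla^\tau$ is a metric connection and the change of frame is by constants, the condition $\nabla^\tau\Psi=0$ is equivalent to the vanishing of the covariant derivative of the top $(3,0)$-form in the adapted frame.

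The main computational step is then to read off the trace of the connection restricted to the holomorphic directions. Writing $\Theta^i = e^{2i-1}+i\,e^{2i}$ for the $(1,0)$-coframe associated to the adapted basis, the covariant derivative of $\Theta^1\wedge\Theta^2\wedge\Theta^3$ is controlled by the sum $\sum_i (\sigma^\tau)^{(1,0)}$ of the diagonal $(1,0)$-parts of the connection $1$-forms in the complexified frame. Concretely, $\nabla^\tau\Psi=0$ holds if and only if the relevant trace of connection $1$-forms vanishes identically, and this trace is assembled from the $(\sigma^\tau)^i_j$ recorded in Proposition~\ref{traza-tau}. I expect the $(\tau-1)$ prefactors appearing in $(\sigma^\tau)^1_2$, $(\sigma^\tau)^1_3$, $(\sigma^\tau)^1_4$, $(\sigma^\tau)^3_4$ to be decisive: for $\tau=1$ all these entries vanish, recovering the stated fact $\nabla^c\Psi=0$ with no condition on $\omega$, whereas for $\tau\neq1$ the surviving $(\tau-1)$ terms force a nontrivial constraint.

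The heart of the argument is identifying that constraint with the balanced condition. After collecting the trace, the coefficient should be proportional to $(\tau-1)$ times a multiple of the torsion $1$-form (the Lee form) of $\omega$, and the balanced condition $d\omega^2=0$ is exactly the vanishing of the Lee form in this $6$-dimensional setting. I would verify this by computing $d\omega^2$ (equivalently $d\ast\omega$) from the structure equations \eqref{J-nilp-real-basis} and checking that the resulting balanced condition matches the vanishing of the trace term computed above; from the form of the equations \eqref{J-nilp-real-basis}, the balanced condition should reduce to a single scalar relation among the coefficients $\rho,\lambda,x,y$ and the metric data $r_e,s_e,k_e,u_e$. The step I anticipate as the main obstacle is matching, coefficient by coefficient, the trace of the $(\sigma^\tau)^i_j$ against this explicit balanced relation, since it requires carefully isolating the $(1,0)$-part of each $1$-form and confirming that the only obstruction to $\nabla^\tau\Psi=0$ (for $\tau\neq 1$) is precisely $\omega$ being balanced, with the common factor $(\tau-1)$ ensuring that the obstruction genuinely disappears only in the Chern case.
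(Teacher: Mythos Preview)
Your approach is essentially the same as the paper's: pass to the adapted $(1,0)$-coframe $\Theta^i=e^{2i-1}+i\,e^{2i}$, observe that $\nabla^\tau\Psi=0$ is equivalent to the vanishing of the trace of the complex connection matrix, and then match this trace with the balanced condition $d\omega^2=0$ computed from the structure equations \eqref{J-nilp-real-basis}. Two small corrections will streamline your write-up: first, the relevant trace is precisely $(\sigma^\tau)^1_2+(\sigma^\tau)^3_4+(\sigma^\tau)^5_6$ (the complex diagonal entries are $\theta^k_k=-i\,(\sigma^\tau)^{2k-1}_{2k}$), so the off-block forms $(\sigma^\tau)^1_3$, $(\sigma^\tau)^1_4$ play no role here; second, the balanced condition is not a single scalar relation but the pair $\lambda u_{e1}-r_e^2 y=0$ and $s_e^2-\lambda u_{e2}+r_e^2 x=0$, which matches exactly the coefficients of $e^5$ and $e^6$ in that trace after the common factor $(\tau-1)$ is removed.
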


\begin{proof}
In terms of an adapted basis $\{ e^l\}_{l=1}^6$ as in Proposition~\ref{adapted-ecus}, we have that 
$$
\nabla^{\tau}((e^1+i\,e^2)\wedge(e^3+i\,e^4)\wedge(e^5+i\,e^6))=0
$$ 
if and only if the connection 1-forms satisfy
$
(\sigma^{\tau})^1_2 + (\sigma^{\tau})^3_4 + (\sigma^{\tau})^5_6 = 0$.
Since the left-invariant (3,0)-forms are related by $\Psi=c\,(e^1+i\,e^2)\wedge(e^3+i\,e^4)\wedge(e^5+i\,e^6)$ 
for some non-zero constant $c$, 
we get that $\nabla^{\tau}\Psi=0$ if and only if 
$$
(\sigma^{\tau})^1_2 + (\sigma^{\tau})^3_4 + (\sigma^{\tau})^5_6 = -\frac{k_e\,(\tau-1)}{r_e^2s_e^2-|u_e|^2} \Big((\lambda\, u_{e1}-r_e^2 y) \,e^5 +(s_e^2 - \lambda\, u_{e2} + r_e^2 x) \,e^6 \Big)=0,
$$
where we have made use of the connection 1-forms given in the proof of Proposition~\ref{traza-tau}. Therefore, given $\tau\not=1$, the above equality holds if and only if
\begin{equation}\label{condis}
\lambda\, u_{e1}-r_e^2 y = 0 = s_e^2 - \lambda\, u_{e2} + r_e^2 x.
\end{equation}

On the other hand, by means of the structure equations in the adapted basis, one directly gets that the metric $\omega$ satisfies the balanced condition $d\omega^2=0$ if and only if 
$$
0=\omega\wedge d\omega = (e^{12}+e^{34})\wedge d e^{56},
$$
which is equivalent to \eqref{condis}.
\end{proof}

\section{The first Anomaly flow equation on nilpotent Lie groups}\label{sec-flow}

\noindent
We now study the behaviour of a general solution to the first equation in the Anomaly flow for our class of nilpotent Lie groups, under certain assumptions. In particular, since we focus on invariant solutions, the first statement in Theorem \ref{thm_A} and Theorem \ref{thm_B} will follow, respectively, by Theorem \ref{almost-diag-prop} and Theorem \ref{bal-cond} below.\medskip

Let $G$ be a 6-dimensional $2$-step nilpotent Lie group with ${b_1\geq 4}$, endowed with a left-invariant non-parallelizable complex structure $J$. Let $\{\zeta^1,\zeta^2,\zeta^3\}$ be a left-invariant $(1,0)$-coframe on $(G,J)$ satisfying \eqref{J-nilp} and let
$$\Psi:=\zeta^1\wedge\zeta^2\wedge\zeta^3$$
be the left-invariant closed (3,0)-form defined in \eqref{Psi}. 

\begin{assumption}\label{condition}
Let $(\omega_t,H_t)$ be the couple of left-invariant Hermitian metrics solving the Anomaly flow \eqref{AF-G} %. Furthermore, given a Gauduchon connection $\nabla$ on $(E,H_t)$, 
 and let ${\rm Tr}(A_t \wedge A_t)$ be a multiple of the (2,2)-form $\zeta^{12\bar1\bar2}$.
\end{assumption}

\begin{remark}\rm Since we are considering left-invariant metrics, the evolution equation of $\omega_t$ reduces to an ODE on the Lie algebra level. We stress that, all the stated results will also hold for the Anomaly flow \eqref{AF-triv} with flat bundle.
\end{remark}

%%%From now on, we assume the holomorphic vector bundle to be
%%%$E:=T^{1,0}G$ and one the following two conditions hold:
%%%\begin{itemize} \setlength\itemsep{0.5em}
%%%\item[1)] the smooth curve of Hermitian metrics $\omega_t$ on $G$ solves
%%%the Anomaly flow \eqref{AF-triv};
%%%\item[2)] the couple of smooth curves of Hermitian metrics $(\omega_t,H_t)$,
%%%with $\omega_t$ on $G$ and $H_t$ along the fibers of $T^{1,0}G$, solves
%%%the Anomaly flow \eqref{AF-G}.
%%%\end{itemize}

%--- Notice that, since the metrics $\omega_t$ and $H_t$ are both left-invariants,
%--- the PDEs describing the Anomaly flows \eqref{AF-triv} and \eqref{AF-G}
%--- reduce to ODEs. \medskip

Let $\omega_t$ be a solution to the Anomaly flow \eqref{AF-G} on $(G,J)$ given by
\begin{equation}\label{F-nilp}
\begin{aligned}
\omega_t  =\ &  \frac{i}{2}\! \left( r(t)^2 \zeta^{1\bar{1}} + s(t)^2 \zeta^{2\bar{2}} + k(t)^2 \zeta^{3\bar{3}} \right)+\frac{1}{2} u(t)\, \zeta^{1\bar{2}} - \frac{1}{2} \overline{u(t)}\, \zeta^{2\bar{1}}\\[5pt]
 &  +\frac{1}{2} v(t)\, \zeta^{2\bar{3}} - \frac{1}{2} \overline{v(t)}\, \zeta^{3\bar{2}}+\frac{1}{2} z(t)\, \zeta^{1\bar{3}} - \frac{1}{2} \overline{z(t)}\, \zeta^{3\bar{1}}\,,
\end{aligned}
\end{equation}
where $r(t),s(t),k(t)$ are positive real functions and $u(t),v(t),z(t)$ are complex functions. Then, we have
%the evolution equation of $\omega_t$ in \eqref{AF-G} reduces to a single ODE equation.

\begin{proposition}\label{AF-evo-F} 
Let Assumption \ref{condition} hold. Then, for every Gauduchon connection $\nabla_t^{\tau}$ on $(G,\omega_t)$, the evolution equation of $\omega_t$ in the Anomaly flow reduces to the ODE
\begin{equation}\label{evol_omega}
\frac d{dt}\,(\Vert\Psi\Vert_{\omega_t}\,\omega_t^2) = K(t,\alpha',\tau)\, \zeta^{12\bar{1}\bar{2}}\,,
\end{equation}
with $K(t,\alpha',\tau)$ depending on the structure equations \eqref{J-nilp} of $(G,J)$ and on the curvature $A_t$ of the connection on $(E,H_t)$.
\end{proposition}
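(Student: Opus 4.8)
The plan is to show directly that the right-hand side of the first equation in \eqref{AF-G} is a (space-constant) multiple of the $(2,2)$-form $\zeta^{12\bar{1}\bar{2}}$; since the flow equation identifies that right-hand side with $\frac{d}{dt}(\Vert\Psi\Vert_{\omega_t}\,\omega_t^2)$, the ODE \eqref{evol_omega} follows at once, the scalar coefficient being the sought $K(t,\alpha',\tau)$. The right-hand side splits into three pieces: $i\partial\overline\partial\omega_t$, the term $-\tfrac{\alpha'}{4}{\rm Tr}(Rm_t^\tau\wedge Rm_t^\tau)$, and the term $\tfrac{\alpha'}{4}{\rm Tr}(A_t^\kappa\wedge A_t^\kappa)$. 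The second is a multiple of $\zeta^{12\bar{1}\bar{2}}$ by Proposition \ref{traza-tau} (recall $Rm^\tau=\Omega^\tau$), and the third is so by Assumption \ref{condition}. Hence everything reduces to analysing the term $i\partial\overline\partial\omega_t$.

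For this last term I would compute $\partial\overline\partial\omega_t$ from \eqref{F-nilp} using the structure equations \eqref{J-nilp}. The decisive structural facts are that $\zeta^1$ and $\zeta^2$ are closed and that $d\zeta^3$, hence also $d\bar\zeta^3$, involves only $\zeta^1,\zeta^2,\bar\zeta^1,\bar\zeta^2$ and no $\zeta^3$ or $\bar\zeta^3$. Examining \eqref{F-nilp} term by term: the summands $\zeta^{1\bar{1}},\zeta^{2\bar{2}},\zeta^{1\bar{2}},\zeta^{2\bar{1}}$ are built from closed generators, so they are annihilated by $\partial\overline\partial$; for the mixed summands $\zeta^{1\bar{3}},\zeta^{2\bar{3}},\zeta^{3\bar{1}},\zeta^{3\bar{2}}$ a short computation shows that the first derivative produces a $3$-form whose surviving $\zeta^3$- or $\bar\zeta^3$-factor is wedged with closed generators, so that the second derivative vanishes. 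The only nonzero contribution comes from the $k(t)^2\,\zeta^{3\bar{3}}$ term, and since $\partial\zeta^3$, $\bar\partial\zeta^3$, $\partial\bar\zeta^3$ and $\bar\partial\bar\zeta^3$ all lie in the exterior algebra generated by $\zeta^1,\zeta^2,\bar\zeta^1,\bar\zeta^2$, the resulting $(2,2)$-form is necessarily a multiple of the unique $(2,2)$-form $\zeta^{12\bar{1}\bar{2}}$ in that subalgebra. A direct evaluation yields $i\partial\overline\partial\omega_t=-\tfrac{k(t)^2}{2}\,(2x-\lambda^2-\rho)\,\zeta^{12\bar{1}\bar{2}}$, depending only on $k(t)$ and on the structure constants $\rho,\lambda,x$ of \eqref{J-nilp}.

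Combining the three pieces gives $\frac{d}{dt}(\Vert\Psi\Vert_{\omega_t}\,\omega_t^2)=K(t,\alpha',\tau)\,\zeta^{12\bar{1}\bar{2}}$, with $K$ the sum of the three scalar coefficients; the dependence on the bundle curvature $A_t$ enters only through the third piece, exactly as claimed. The only genuinely computational point — and the main thing to verify with care — is the vanishing of the contributions of the off-diagonal summands $\zeta^{i\bar{3}},\zeta^{3\bar{i}}$ $(i=1,2)$ to $\partial\overline\partial\omega_t$; once this degree/closedness bookkeeping is settled, the statement is immediate. I would also record the explicit value of the $i\partial\overline\partial\omega_t$-coefficient for later use, since it reappears in the reduction to the scalar ODE \eqref{MP-intro}, although for the present proposition only its membership in $\RR\,\zeta^{12\bar{1}\bar{2}}$ is needed.
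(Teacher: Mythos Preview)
Your proposal is correct and follows essentially the same route as the paper: split the right-hand side into $i\partial\overline\partial\omega_t$, the $Rm^\tau$-trace (handled by Proposition~\ref{traza-tau}), and the $A^\kappa$-trace (handled by Assumption~\ref{condition}), then observe each is a multiple of $\zeta^{12\bar1\bar2}$. Your explicit coefficient $-\tfrac{k(t)^2}{2}(2x-\lambda^2-\rho)=\tfrac{k(t)^2}{2}(\rho+\lambda^2-2x)$ agrees with the paper's formula \eqref{dem-ddbar}.

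One small inaccuracy in your verbal argument: for the mixed summands $\zeta^{i\bar3}$, $\zeta^{3\bar i}$ ($i=1,2$), after applying one derivative there is in fact \emph{no} surviving $\zeta^3$- or $\bar\zeta^3$-factor at all, since $d\zeta^3$ and $d\bar\zeta^3$ already lie entirely in the subalgebra generated by $\zeta^1,\zeta^2,\bar\zeta^1,\bar\zeta^2$; the resulting $3$-form is therefore built from closed generators, and the second derivative vanishes for that reason. Your conclusion is right, only the wording should be adjusted.
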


\begin{proof} By means of \eqref{J-nilp} and \eqref{F-nilp}, a direct computation yields that
\begin{equation}\label{dem-ddbar}
2i\partial \bar\partial \omega_t=-k(t)^2( \bar \partial\zeta^3\wedge \partial \zeta^{\bar 3}-\partial \zeta^3\wedge\bar \partial \zeta^{\bar 3})= k(t)^2(\rho+\lambda^2-2x)\zeta^{12\bar1\bar2}\,.
\end{equation}
On the other hand, by means of Proposition \ref{traza-tau}, the trace of the curvature of a Gauduchon connection $\nabla^\tau$ on $(G,\omega_t)$ satisfies 
$$
{\rm Tr}(Rm^{\tau}\wedge Rm^{\tau}) = C(t) \, \zeta^{12\bar{1}\bar{2}}\,.
$$
Therefore, by the assumption on the curvature $A_t$, we have 
$$
\partial_t(\Vert\Psi\Vert_{\omega_t}\,\omega_t^2) = {i\partial\overline\partial \omega_t - \frac{\alpha'}{4}\left( {\rm Tr}(Rm^{\tau}\wedge Rm^{\tau}) - {\rm Tr}(A_t\wedge A_t)\right)}
= K(t,\alpha',\tau)\, \zeta^{12\bar{1}\bar{2}}\,,
$$
where $K(t,\alpha',\tau)$ also depends on $(G,J)$ and $A_t$.
\end{proof}

Let us now analyze equation \eqref{evol_omega} in more detail. A direct computation yields that
$$
\begin{aligned}
2\,\omega_t \wedge \omega_t = &\left( r(t)^2 s(t)^2 \!-\! |u(t)|^2 \right) \zeta^{12\bar{1}\bar{2}} + \left( r(t)^2 k(t)^2 \!-\! |z(t)|^2 \right) \zeta^{13\bar{1}\bar{3}} + \left( s(t)^2 k(t)^2 \!-\! |v(t)|^2 \right) \zeta^{23\bar{2}\bar{3}}\\
& - i \left( r(t)^2 v(t) - i\, z(t)\overline{u(t)} \right) \zeta^{12\bar{1}\bar{3}} 
+ i \left( r(t)^2 \overline{v(t)} + i\, u(t) \overline{z(t)} \right) \zeta^{13\bar{1}\bar{2}}\\
&+ i \left( s(t)^2 z(t) + i\,u(t) v(t) \right) \zeta^{12\bar{2}\bar{3}} 
- i \left( s(t)^2 \overline{z(t)} - i\, \overline{u(t)} \overline{v(t)} \right) \zeta^{23\bar{1}\bar{2}} \\
& - i \left( k(t)^2 u(t) - i\, z(t) \overline{v(t)} \right) \zeta^{13\bar{2}\bar{3}} 
+ i \left( k(t)^2 \overline{u(t)} + i\, v(t) \overline{z(t)} \right)\zeta^{23\bar{1}\bar{3}}\,.
\end{aligned}
$$
Therefore, by substituting in \eqref{evol_omega}, one gets that the following relations hold along the flow:
\begin{equation}\label{ecu-main}
\frac d{dt}\left( \Vert\Psi\Vert_{\omega_t} (r(t)^2 s(t)^2 \!-\! |u(t)|^2) \right)=2\, K(t,\alpha',\tau),
\end{equation}
and
\begin{equation}\label{ecu-1}
\frac d{dt}\left( \Vert\Psi\Vert_{\omega_t} (r(t)^2 k(t)^2 \!-\! |z(t)|^2) \right)=0 \quad\Longrightarrow\quad   r(t)^2 k(t)^2 \!-\! |z(t)|^2 =\displaystyle{\frac{c_1}{\Vert\Psi\Vert_{\omega_t}}}\,,
\end{equation}
\begin{equation}\label{ecu-2}
\frac d{dt}\left( \Vert\Psi\Vert_{\omega_t} (s(t)^2 k(t)^2 \!-\! |v(t)|^2) \right)=0
\quad\Longrightarrow\quad   s(t)^2 k(t)^2 \!-\! |v(t)|^2 =\displaystyle{\frac{c_2}{\Vert\Psi\Vert_{\omega_t}}}\,,
\end{equation}
\begin{equation}\label{ecu-3}
\frac d{dt}\left( \Vert\Psi\Vert_{\omega_t} (r(t)^2 v(t) - i\, z(t)\overline{u(t)} ) \right)=0
\quad\Longrightarrow\quad   r(t)^2 v(t) - i\, z(t)\overline{u(t)}  =\displaystyle{\frac{c_3}{\Vert\Psi\Vert_{\omega_t}}}\,,
\end{equation}
\begin{equation}\label{ecu-4}
\frac d{dt}\left( \Vert\Psi\Vert_{\omega_t} (s(t)^2 z(t) + i\, u(t) v(t) ) \right)=0\quad\Longrightarrow\quad   s(t)^2 z(t) + i\, u(t) v(t) =\displaystyle{\frac{c_4}{\Vert\Psi\Vert_{\omega_t}}}\,,
\end{equation}
\begin{equation}\label{ecu-5}
\frac d{dt}\left( \Vert\Psi\Vert_{\omega_t} (k(t)^2 u(t) - i\, z(t) \overline{v(t)} ) \right)=0\quad\Longrightarrow\quad   k(t)^2 u(t) - i\, z(t) \overline{v(t)} =\displaystyle{\frac{c_5}{\Vert\Psi\Vert_{\omega_t}}}\,,
\end{equation}
for some constants $c_1,c_2\in\RR$ with $c_1,c_2>0$, and $c_3,c_4,c_5\in\C$, which are determined by the initial metric $\omega_0$.\medskip

\subsection{Special Hermitian metrics along the flow}\hfill\medskip

\noindent
In \cite{PPZ18} Phong, Picard and Zhang proved that the Anomaly flow preserve the conformally balanced condition, once the connections $\nabla^\tau$ and $\nabla^\kappa$ are both Chern. In the following, we extend such a result to any connection in the Gauduchon family for our class of nilpotent Lie groups. Moreover, we also show that the locally conformally K\"ahler condition is preserved along the flow.\smallskip

A Hermitian metric $\omega$ is said to be \emph{locally conformally K\"ahler} if it is conformal to some local K\"ahler metric in a neighborhood of each point. Recall that such metrics are also characterized by the condition $d\omega=\theta\wedge\omega$, where $\theta$ is the (closed) Lee form. 

\begin{theorem}\label{bal-cond}
Under Assumption~\ref{condition}, we have:
\begin{itemize}\setlength\itemsep{0.5em}
\item[(i)] If $\omega_0$ is balanced, then $\omega_t$ remains balanced along the Anomaly flow.
\item[(ii)] If $\omega_0$ is locally conformally K\"ahler, then $\omega_t$ remains locally conformally K\"ahler along the Anomaly flow. 
\end{itemize}
\end{theorem}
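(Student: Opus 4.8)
The plan is to reduce both parts to the conservation laws \eqref{ecu-1}--\eqref{ecu-5}, which already encode how the $\zeta^3,\zeta^{\bar3}$-components of $\omega_t^2$ evolve. The common first step is to substitute \eqref{ecu-1}--\eqref{ecu-5} into the expression for $2\,\omega_t\wedge\omega_t$ computed above. Writing $f:=\Vert\Psi\Vert_{\omega_t}$, this gives
$$
2\,\omega_t^2=(r^2s^2-|u|^2)\,\zeta^{12\bar{1}\bar{2}}+\frac1f\,\Theta,
$$
where $\Theta$ collects the remaining eight terms, now carrying \emph{constant} coefficients $c_1,\dots,c_5,\bar c_3,\bar c_4,\bar c_5$ fixed by $\omega_0$. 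The only genuinely $t$-dependent coefficient is that of $\zeta^{12\bar{1}\bar{2}}$, governed by \eqref{ecu-main}.

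For (i) I would argue as follows. Since $\zeta^1,\zeta^2$ and their conjugates are closed, $\zeta^{12\bar{1}\bar{2}}$ is closed, while $r^2s^2-|u|^2$ and $f$ are functions of $t$ only (constant on $G$). Hence $d(2\,\omega_t^2)=f^{-1}\,d\Theta$, so that $d\omega_t^2=\tfrac1{2f}\,d\Theta$ with $d\Theta$ \emph{independent of $t$}. If $\omega_0$ is balanced then $d\Theta=0$, whence $\omega_t$ stays balanced for all $t$. This route is cleaner than, and avoids re-deriving, the balanced conditions \eqref{condis}.

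For (ii) I would first characterise the invariant locally conformally K\"ahler metrics algebraically using the adapted basis of Proposition~\ref{adapted-ecus}, in which $\omega=e^{12}+e^{34}+e^{56}$ and hence $d\omega=de^{56}=de^5\wedge e^6-e^5\wedge de^6$, with $de^5,de^6\in\Lambda^2\langle e^1,\dots,e^4\rangle$ read off from \eqref{J-nilp-real-basis}. Imposing $d\omega=\theta\wedge\omega$ forces $\theta\in\langle e^5,e^6\rangle$, and comparing the $e^{ij5}$ and $e^{ij6}$ components term by term yields $\rho=\lambda=y=0$ together with $u_e=0$ and $s_e^2=r_e^2\,x$; a short check then shows the resulting Lee form is automatically closed. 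The constraints $\rho=\lambda=y=0$ are properties of $J$ alone and are fixed along the flow, so (ii) is vacuous unless they hold. The decisive observation is that the two metric conditions translate into $t$-independent relations among the conserved constants: from $k^2u_e=k^2u-i\,z\bar v=c_5/f$ in \eqref{ecu-5} one gets $u_e=0\Leftrightarrow c_5=0$, while $s_e^2=r_e^2x\Leftrightarrow (s^2k^2-|v|^2)=x\,(r^2k^2-|z|^2)\Leftrightarrow c_2=x\,c_1$ by \eqref{ecu-1}--\eqref{ecu-2}. Since $c_1,c_2,c_5$ are conserved, both conditions persist and $\omega_t$ remains locally conformally K\"ahler.

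The main obstacle lies entirely in part (ii): establishing the algebraic characterisation of the locally conformally K\"ahler condition, namely the term-by-term matching of $d\omega=\theta\wedge\omega$ in the adapted basis together with the verification that $d\theta=0$ is automatic. This is the one genuinely computational step and requires the full structure equations \eqref{J-nilp-real-basis}; once it is in place, invariance under the flow is immediate from the conservation laws, exactly as in part (i).
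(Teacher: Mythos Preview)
Your proposal is correct, and for part~(i) it is genuinely cleaner than the paper's argument. The paper quotes an explicit algebraic characterisation of the balanced condition from \cite[Proposition~25]{U} and then checks, using \eqref{ecu-1}, \eqref{ecu-2} and \eqref{ecu-5}, that it reduces to the $t$-independent relation $c_1(x+iy)+c_2=i\lambda\bar c_5$. You bypass this entirely: since $\zeta^{12\bar1\bar2}$ is closed and $\Theta$ has constant coefficients, $d\omega_t^2=\tfrac{1}{2f}\,d\Theta$ immediately shows that the balanced condition is a property of $\Theta$ alone, hence of $\omega_0$. This argument would work verbatim for any closed condition on $\omega_t^2$, not just $d\omega_t^2=0$.

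For part~(ii) the two approaches converge at the end---both reduce the locally conformally K\"ahler condition to relations among the conserved constants $c_1,c_2,c_5$---but reach that point differently. The paper cites \cite[Proposition~32]{U}, which in the normalised classification gives $\rho=\lambda=y=0$ and $x=1$, so the metric conditions become $c_1=c_2$ and $c_5=0$. You instead re-derive the characterisation directly in the adapted basis, obtaining $\rho=\lambda=y=0$, $u_e=0$ and $s_e^2=r_e^2\,x$, hence $c_5=0$ and $c_2=x\,c_1$ (your version does not require the normalisation $x=1$). The trade-off is clear: the paper's route is shorter because it outsources the characterisation, while yours is self-contained but requires the term-by-term computation you flag as the main obstacle. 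That computation is indeed straightforward once one observes that $d\omega=de^5\wedge e^6-e^5\wedge de^6$ forces $\theta\in\langle e^5,e^6\rangle$ and then matches coefficients in \eqref{J-nilp-real-basis}; the closedness of $\theta$ follows since $de^5=0$ under the resulting constraints.
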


\begin{remark}\rm By \cite[Theorem 1.2]{FPS04}, the pluriclosed condition $\partial\db \omega=0$ for a left-invariant metric $\omega$ on $(G,J)$ 
only depends on the complex structure $J$ (see also \cite{U}). Therefore, if the initial metric $\omega_0$ is pluriclosed, the solution $\omega_t$ to the Anomaly flow holds pluriclosed.
\end{remark}

\begin{proof}[Proof of Theorem \ref{bal-cond}] 
In view of \cite[Proposition 25]{U}, the left-invariant Hermitian metric  $\omega_t$ 
%%%$$
%%%2\, \omega=i \left(r^2\zeta^{1\bar{1}} + s^2\zeta^{2\bar{2}} %%%+k^2\zeta^{3\bar{3}}\right)+u\,\zeta^{1\bar{2}}-\bar{u}\,%%%\zeta^{2\bar{1}} +v\,\zeta^{2\bar{3}}-\bar{v}\,\zeta^{3\bar{2}}+z\,%%%\zeta^{1\bar{3}}-\bar{z}\,\zeta^{3\bar{1}}
%%%$$
%%%on $G$ (with $\{\zeta^1,\zeta^2,\zeta^3\}$ satisfying \eqref{J-nilp}) 
is balanced if and only if
\begin{equation}\label{Bnilp-condition}
s(t)^2k(t)^2-|v(t)|^2 + (x+i\, y)\left( r(t)^2k(t)^2-|z(t)|^2 \right) = i\,\lambda\left( k(t)^2\,\overline{u(t)} +i\, v(t)\overline{z(t)}\right).
\end{equation}
On the other hand, by means of \eqref{ecu-1} and \eqref{ecu-2}, the left-hand side of \eqref{Bnilp-condition} reduces to
$$
s(t)^2k(t)^2-|v(t)|^2 +  (x+i\, y)\left( r(t)^2k(t)^2-|z(t)|^2 \right) = {\frac{c_1 (x+i\, y)+c_2}{\Vert\Psi\Vert_{\omega_t}}},
$$
while, by means of \eqref{ecu-5}, the right-hand side of \eqref{Bnilp-condition} is equal to
$$
i\,\lambda\left( k(t)^2\,\overline{u(t)} +i\, v(t)\overline{z(t)}\right) = {\frac{i\,\lambda\, {\bar c_5}}{\Vert\Psi\Vert_{\omega_t}}}.
$$
Thus, $\omega_t$ is a balanced metric if and only if 
$c_1 (x+i\, y)+c_2=i\, \lambda\, {\bar c_5}$. Since the constants $c_1,c_2$ and $c_5$ only depend on the initial metric $\omega_0$, it follows that $\omega_t$ satisfies the balanced condition if and only if $\omega_0$ does.

Let us now prove (ii). By means of \cite[Proposition 32]{U}, if $\omega_0$ is locally conformally K\"ahler, we must have
$$
\rho=\lambda=y=0\qquad \text{and}\qquad x=1
$$ 
in the complex structure equations \eqref{J-nilp}.
Moreover, $\omega_t$ is locally conformally K\"ahler if and only if 
$$
r(t)^2 k(t)^2 \!-\! |z(t)|^2 = s(t)^2 k(t)^2 \!-\! |v(t)|^2 
\quad\mbox{ and }\quad
k(t)^2 u(t)=i\, z(t) \overline{v(t)}.
$$
Therefore, in view of \eqref{ecu-1}, \eqref{ecu-2} and \eqref{ecu-5}, it follows that $\omega_t$ is a locally conformally K\"ahler metric if and only if 
$c_1-c_2=c_5=0$. Finally, since the constants $c_1,c_2,c_5$ only depend on the initial metric $\omega_0$, we get that $\omega_t$ is locally conformally K\"ahler if and only if $\omega_0$ is locally conformally K\"ahler as well, and the claim follows.
\end{proof}

%\begin{remark}\rm In view of Theorem \ref{bal-cond}, the balanced condition on $\omega_t$ does not depend on the coefficients $c_3$ and $c_4$. 
%\end{remark}

%\begin{example}\rm Let $G$ be a 2-step nilpotent Lie group arising from \eqref{J-nilp}. If we consider $\lambda=1$ and $D\in\RR$ such that $0\leq D<\frac{1}{4}$, then there are no left-invariant diagonal balanced metric on $G$, since the diagonal condition directly implies $c_5=0$. Nonetheless, under these assumptions, it is always possible to find an almost diagonal balanced metric on $M$.
%\end{example}

\subsection{Reduction to almost diagonal initial metrics and the general solution}\hfill\medskip

\noindent In the following, we prove that any initial Hermitian metric $\omega_0$ can be taken to be almost diagonal. Then, we use this result to obtain the general solution $\omega_t$ to the first evolution equation in the Anomaly flow \eqref{AF-G}.\smallskip

A Hermitian metric $\omega$ is said to be \emph{almost diagonal} if its metric coefficients satisfy $v=z=0$ in \eqref{2forma}. Our next result shows that we can always choose a preferable $(1,0)$-coframe on $(G,J)$ such that the metric $\omega$ is almost diagonal.

\begin{lemma}\label{almost-diag-lemma}
Let $\omega$ be a left-invariant Hermitian metric on $(G,J)$. Then, there exists an automorphism which preserves both the complex structure equations \eqref{J-nilp} and the (3,0)-form $\Psi$, and such that $\omega$ reduces to an almost diagonal form.
\end{lemma}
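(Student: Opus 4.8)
The plan is to exhibit the desired automorphism explicitly; in fact the transformation needed has already appeared at the very beginning of the proof of Proposition~\ref{adapted-ecus}. First I would set
\begin{equation*}
\sigma^1:=\zeta^1,\qquad \sigma^2:=\zeta^2,\qquad \sigma^3:=\zeta^3-\frac{iv}{k^2}\,\zeta^2-\frac{iz}{k^2}\,\zeta^1,
\end{equation*}
exactly as in \eqref{J-nilp-sigma-anterior}. By the computation carried out in Proposition~\ref{adapted-ecus}, this change of $(1,0)$-coframe corresponds to an automorphism of $\frg$ commuting with $J$: since the exterior derivative encodes the bracket, the fact that the new coframe still satisfies the structure equations \eqref{J-nilp} with the same coefficients $\rho,\lambda,x,y$ (see \eqref{J-nilp-sigma}) is precisely the statement that the complex structure equations are preserved.

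Next I would invoke the fact, also established in that proof, that in terms of $\{\sigma^i\}$ the metric takes the form \eqref{J-nilp-sigma-posterior}, in which the coefficients of $\sigma^{2\bar3}$ and $\sigma^{1\bar3}$ both vanish; that is, $\omega$ has $v=z=0$ and is therefore almost diagonal with respect to $\{\sigma^i\}$.

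Finally, it remains only to check that this automorphism fixes the $(3,0)$-form $\Psi$. This is the one genuinely new point, and it is immediate:
\begin{equation*}
\sigma^1\wedge\sigma^2\wedge\sigma^3
=\zeta^1\wedge\zeta^2\wedge\Bigl(\zeta^3-\tfrac{iv}{k^2}\,\zeta^2-\tfrac{iz}{k^2}\,\zeta^1\Bigr)
=\zeta^1\wedge\zeta^2\wedge\zeta^3=\Psi,
\end{equation*}
since the two correction terms are proportional to $\zeta^2$ and $\zeta^1$ and hence are annihilated by the wedge with $\zeta^1\wedge\zeta^2$. This completes the argument. I do not expect any serious obstacle: all the substantive bookkeeping—the preservation of \eqref{J-nilp} and the resulting shape \eqref{J-nilp-sigma-posterior} of $\omega$—was already carried out inside the proof of Proposition~\ref{adapted-ecus}, and the only additional verification, the invariance of $\Psi$, is a single line.
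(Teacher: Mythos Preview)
Your proof is correct and follows essentially the same approach as the paper: both invoke the change of coframe \eqref{J-nilp-sigma-anterior} from Proposition~\ref{adapted-ecus}, cite \eqref{J-nilp-sigma} and \eqref{J-nilp-sigma-posterior} for the preservation of the structure equations and the almost diagonal form, and then verify $\sigma^1\wedge\sigma^2\wedge\sigma^3=\zeta^1\wedge\zeta^2\wedge\zeta^3$ directly. Your version spells out the $\Psi$-invariance slightly more explicitly, but there is no substantive difference.
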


\begin{proof} 
To prove this lemma we essentially use the same argument as in Proposition \ref{adapted-ecus}. Let us consider the automorphisms induced by \eqref{J-nilp-sigma-anterior}. By construction this automorphism preserves the complex structure equations (see \eqref{J-nilp-sigma}). Moreover, a direct computation yields that
$$\Psi=\zeta^1\wedge\zeta^2\wedge\zeta^3=\sigma^1\wedge\sigma^2\wedge\sigma^3$$
and hence the holomorphic (3,0)-form $\Psi$ is also preserved. 
Finally, in terms of the coframe $\{\sigma^l\}_{l=1}^3$ the Hermitian metric $\omega$ expresses as in
\eqref{J-nilp-sigma-posterior} and hence the claim follows.
\end{proof}

We are now in a position to describe the general solution to the Anomaly flow starting from an almost diagonal metric.

\begin{theorem}\label{almost-diag-prop}
Under Assumption~\ref{condition}, the Anomaly flow preserves the almost diagonal condition. More concretely, if $\omega_0$ is almost diagonal, then $\omega_t$ evolves as
\begin{equation}\label{almost-diagonal-Ft}
\omega_t =\frac{i}{2}\left(r(t)^2 \zeta^{1\bar{1}} + \frac{c_2}{c_1}\,r(t)^2 \zeta^{2\bar{2}} + \frac{c_1c_2\!-\!|c_5|^2}{8} \zeta^{3\bar{3}}\right)+\frac{c_5}{2c_1}\,r(t)^2\, \zeta^{1\bar{2}} - {\frac{\bar{c}_5}{2c_1}}\,r(t)^2\, \zeta^{2\bar{1}},
\end{equation}
where $c_1,c_2>0$ and $c_5\in\C$ with $c_1c_2>|c_5|^2$, 
are constants determined by the initial metric $\omega_0$.
Furthermore,
$$
\Vert\Psi\Vert_{\omega_t}=\frac{8\, c_1}{(c_1c_2\!-\!|c_5|^2)\,r(t)^2}\,.
$$
\end{theorem}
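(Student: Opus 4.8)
The plan is to extract the entire statement from the conservation laws \eqref{ecu-1}--\eqref{ecu-5}, which already carry all the dynamical content of the first flow equation once Assumption~\ref{condition} is imposed, so that no further integration of \eqref{evol_omega} is needed. By Lemma~\ref{almost-diag-lemma} I may assume from the outset that $\omega_0$ is almost diagonal, i.e. $v(0)=z(0)=0$ in \eqref{F-nilp}. Evaluating the constants appearing in \eqref{ecu-3} and \eqref{ecu-4} at $t=0$ then gives $c_3=c_4=0$, while $c_1,c_2>0$ and $c_5\in\C$ remain as the data attached to $\omega_0$ through \eqref{ecu-1}, \eqref{ecu-2}, \eqref{ecu-5}.

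First I would prove that the flow preserves almost-diagonality. Since $c_3=c_4=0$, the right-hand relations in \eqref{ecu-3} and \eqref{ecu-4} become $r(t)^2 v(t)-i\,z(t)\overline{u(t)}=0$ and $s(t)^2 z(t)+i\,u(t)v(t)=0$ for all $t$. Reading these as a homogeneous linear system in $(v,z)$, I eliminate $z$ from the second relation and substitute into the first to obtain $\big(r(t)^2 s(t)^2-|u(t)|^2\big)\,v(t)=0$. By positive-definiteness of $\omega_t$ the bound $r^2s^2>|u|^2$ of \eqref{F-non-deg-1} holds at every time, so $v(t)\equiv 0$; then $s(t)^2>0$ forces $z(t)\equiv 0$ as well. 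Hence the almost diagonal condition is preserved along the flow.

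With $v=z=0$ the surviving conservation laws \eqref{ecu-1}, \eqref{ecu-2}, \eqref{ecu-5} reduce to $r^2 k^2=c_1/\Vert\Psi\Vert_{\omega_t}$, $s^2 k^2=c_2/\Vert\Psi\Vert_{\omega_t}$ and $k^2 u=c_5/\Vert\Psi\Vert_{\omega_t}$. Dividing the last two by the first eliminates both $k^2$ and $\Vert\Psi\Vert_{\omega_t}$ and yields $s(t)^2=\tfrac{c_2}{c_1}\,r(t)^2$ and $u(t)=\tfrac{c_5}{c_1}\,r(t)^2$, so the $\zeta^{2\bar2}$ coefficient and the off-diagonal coefficient are slaved to $r(t)^2$ exactly as in \eqref{almost-diagonal-Ft}. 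It then only remains to pin down $k(t)^2$ and $\Vert\Psi\Vert_{\omega_t}$ separately.

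The one genuinely non-formal step is closing the system through the norm $\Vert\Psi\Vert_{\omega_t}$, and this is where I expect the main obstacle to lie: it is pure bookkeeping, but the normalisation constant must be tracked precisely so that the factor $8$ and the $\zeta^{3\bar3}$-coefficient come out right. Since $\Vert\Psi\Vert_{\omega_t}$ depends only on the Hermitian determinant $8i\det\omega$ of \eqref{F-non-deg-2}, which in the almost diagonal case equals $k^2\big(r^2 s^2-|u|^2\big)$, the chosen normalisation gives $\Vert\Psi\Vert_{\omega_t}^2=8/\big(k^2(r^2 s^2-|u|^2)\big)$. Substituting $s^2=\tfrac{c_2}{c_1}r^2$ and $u=\tfrac{c_5}{c_1}r^2$ produces $r^2 s^2-|u|^2=\tfrac{r^4}{c_1^2}\,(c_1c_2-|c_5|^2)$; combining this with $\Vert\Psi\Vert_{\omega_t}=c_1/(r^2k^2)$ from \eqref{ecu-1} forces the constant value $k(t)^2=\tfrac{c_1c_2-|c_5|^2}{8}$, and then $\Vert\Psi\Vert_{\omega_t}=\tfrac{8c_1}{(c_1c_2-|c_5|^2)\,r^2}$. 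The positivity $c_1c_2>|c_5|^2$ is precisely $k^2>0$, equivalently \eqref{F-non-deg-2}. Plugging these values back into \eqref{F-nilp} reproduces \eqref{almost-diagonal-Ft}, which completes the proof.
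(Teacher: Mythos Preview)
Your proof is correct and follows essentially the same route as the paper: both arguments extract everything from the conservation laws \eqref{ecu-1}--\eqref{ecu-5}, first showing $c_3=c_4=0$ forces $v\equiv z\equiv 0$ (the paper solves the linear system explicitly, you observe it is homogeneous with nonvanishing determinant), and then combine the remaining three laws with the explicit formula $\Vert\Psi\Vert_{\omega_t}^2=8/\big(k^2(r^2s^2-|u|^2)\big)$ to pin down $k^2$, $s^2$, $u$ and $\Vert\Psi\Vert_{\omega_t}$. The only difference is the order of the deductions, which is immaterial.
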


%\begin{theorem}\label{almost-diag-prop}
%Let the initial metric $\omega_0$ be almost diagonal (i.e. $v(0)=z(0)=0$). Then, the solution $\omega_t$ to the Anomaly flow \eqref{AF-G} remains almost diagonal, in the defining interval of $(\omega_t,H_t)$. Moreover, it satisfies
%\begin{equation}\label{almost-diagonal-Ft}
%\omega_t \!=\! \displaystyle{\frac{i}{2}\,r(t)^2 \omega^{1\bar{1}}} + \displaystyle{\frac{ic_2}{2c_1}\,r(t)^2 \omega^{2\bar{2}} + 
%\frac{i(c_1c_2\!-\!|c_5|^2)}{16} \omega^{3\bar{3}}}
%+\displaystyle{\frac{c_5}{2c_1}}\,r(t)^2\, \omega^{1\bar{2}} - \displaystyle{\frac{\overline{c_5}}{2c_1}}\,r(t)^2\, \omega^{2\bar{1}},
%\end{equation}
%where $c_1,c_2>0$ and $c_5\in\C$ satisfy $c_1\,c_2>|c_5|^2$, and%which in turn implies
%$$
%\Vert\Psi\Vert_{\omega_t}=\displaystyle{\frac{8\, c_1}{(c_1\,c_2-|c_5|^2)\,r(t)^2}}\,.
%$$
%\end{theorem}

\begin{proof}
Since equations \eqref{ecu-3} and \eqref{ecu-4} hold, the functions $v(t)$ and $z(t)$ satisfy
$$
v(t)={\frac{c_3\, s(t)^2+i\,c_4\, \overline{u(t)} }{\Vert\Psi\Vert_{\omega_t} (r(t)^2 s(t)^2 - |u(t)|^2)}}\,, \qquad z(t)={\frac{-i\,c_3\, u(t) +c_4\, r(t)^2}{\Vert\Psi\Vert_{\omega_t} (r(t)^2 s(t)^2 - |u(t)|^2)}}\,,
$$
for any $t$ in the defining interval. On the other hand, since we assumed $\omega_0$ to be almost diagonal, i.e. $v(0)=z(0)=0$, we get
$c_3=c_4=0$. Therefore, $v(t)=0$ and $z(t)=0$ and hence the solution $\omega_t$ holds almost diagonal.

Let us now prove the second part of the statement. As a direct consequence of \eqref{ecu-1}, \eqref{ecu-2} and \eqref{ecu-5}, it follows that
$$
(r(t)^2 s(t)^2-|u(t)|^2) k(t)^4=\displaystyle{\frac{c_1c_2-|c_5|^2}{\Vert\Psi\Vert_{\omega_t}^2}}\,,
$$
which implies 
$$
\Vert\Psi\Vert_{\omega_t}^2 = \displaystyle{\frac{c_1c_2-|c_5|^2}{(r(t)^2 s(t)^2-|u(t)|^2) k(t)^4}}\,,
$$
with $c_1c_2-|c_5|^2>0$ by the  positive definiteness of the metric $\omega_0$. Moreover, by the definition of $\Vert\Psi\Vert_{\omega_t}^2$, we have
$$
\Vert\Psi\Vert_{\omega_t}^2%=\displaystyle{i\, \Omega\wedge \overline\Omega \left( \frac{\omega_t^3}{3!} \right)^{-1}}
=\frac{1}{\det \omega_t}
={\frac{8}{(r(t)^2 s(t)^2-|u(t)|^2) k(t)^2}}
$$
and hence
$$
k(t)=\sqrt{\frac{c_1\,c_2-|c_5|^2}{8}}.
$$
In particular, $k(t)$ is constant. Finally, by means of \eqref{ecu-1}, \eqref{ecu-2} and \eqref{ecu-5}, we have
$$
0=c_2\, r(t)^2 k(t)^2 - c_1\, s(t)^2 k(t)^2 = (c_2\, r(t)^2 - c_1\, s(t)^2) k(t)^2
$$
and
$$
0=c_5\, r(t)^2 k(t)^2 - c_1\, u(t) k(t)^2 = (c_5\, r(t)^2 - c_1\, u(t)) k(t)^2\,,
$$
which respectively imply 
$$
s(t)^2 = \frac{c_2}{c_1}\, r(t)^2\qquad\text{and}\qquad u(t) = \frac{c_5}{c_1}\, r(t)^2\,,
$$ 
and the claim follows.
\end{proof}

When the initial metric $\omega_0$ is {\em diagonal} (that is, $u(0)\!=\!v(0)\!=\!z(0)\!=\!0$), the above result simplifies~to

\begin{corollary}\label{cor-diag}
Under Assumption~\ref{condition}, the Anomaly flow preserves the diagonal condition. Specifically, if $\omega_0$ is diagonal, then $\omega_t$ is given by
\begin{equation*}
\omega_t= \frac{i}{2}\left(\,r(t)^2 \zeta^{1\bar{1}} + \displaystyle{\frac{c_2}{c_1}\,r(t)^2 \zeta^{2\bar{2}} + \frac{c_1c_2}{8} \zeta^{3\bar{3}}}\right)\,,
\end{equation*}
where $c_1=\frac{\sqrt{8}\,r(0)k(0)}{s(0)}>0$ and $c_2=\frac{\sqrt{8}\,s(0)k(0)}{r(0)}>0$. Moreover, 
$\Vert\Psi\Vert_{\omega_t}={\frac{8}{c_2\,r(t)^2}}$.
\end{corollary}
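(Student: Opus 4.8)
The plan is to derive this as a direct specialization of Theorem~\ref{almost-diag-prop}. Since a diagonal metric satisfies $u(0)=v(0)=z(0)=0$, it is in particular almost diagonal (namely $v(0)=z(0)=0$), so the conclusion of Theorem~\ref{almost-diag-prop} applies verbatim: the flow stays almost diagonal, $\omega_t$ has the form displayed in \eqref{almost-diagonal-Ft}, and $\Vert\Psi\Vert_{\omega_t}=8c_1/((c_1c_2-|c_5|^2)\,r(t)^2)$. The only additional input needed is the vanishing of the remaining off-diagonal coefficient.

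First I would evaluate the conserved quantity \eqref{ecu-5} at $t=0$: since $u(0)=v(0)=z(0)=0$, the left-hand side $k(0)^2u(0)-i\,z(0)\overline{v(0)}$ vanishes, which forces $c_5=0$. Substituting $c_5=0$ into \eqref{almost-diagonal-Ft} annihilates the $\zeta^{1\bar2}$ and $\zeta^{2\bar1}$ terms and turns the $\zeta^{3\bar3}$ coefficient into $c_1c_2/8$, while $\Vert\Psi\Vert_{\omega_t}$ collapses to $8/(c_2\,r(t)^2)$. This already produces the stated shape of $\omega_t$ and of $\Vert\Psi\Vert_{\omega_t}$. Note also that the diagonal condition genuinely propagates: Theorem~\ref{almost-diag-prop} already forces $v(t)=z(t)=0$, and with $c_5=0$ one has $u(t)=(c_5/c_1)\,r(t)^2=0$ for all $t$.

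It remains to pin down $c_1$ and $c_2$ in terms of the initial data. Evaluating the identity $\Vert\Psi\Vert_{\omega_t}^2=8/((r(t)^2s(t)^2-|u(t)|^2)\,k(t)^2)$, established in the proof of Theorem~\ref{almost-diag-prop}, at $t=0$ with $u(0)=0$ yields $\Vert\Psi\Vert_{\omega_0}=\sqrt{8}/(r(0)s(0)k(0))$. Reading off \eqref{ecu-1} and \eqref{ecu-2} at $t=0$, where $z(0)=v(0)=0$, then gives $c_1=\Vert\Psi\Vert_{\omega_0}\,r(0)^2k(0)^2$ and $c_2=\Vert\Psi\Vert_{\omega_0}\,s(0)^2k(0)^2$, which simplify to $c_1=\sqrt{8}\,r(0)k(0)/s(0)$ and $c_2=\sqrt{8}\,s(0)k(0)/r(0)$, as claimed.

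I do not expect any genuine obstacle: the statement is a bookkeeping specialization of the preceding theorem, and the only care required is to track the conserved constants $c_1,c_2,c_5$ correctly and to verify that setting $c_5=0$ is precisely what the extra condition $u(0)=0$ enforces.
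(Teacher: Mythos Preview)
Your proposal is correct and follows exactly the approach implicit in the paper: the corollary is stated there without proof as the direct specialization of Theorem~\ref{almost-diag-prop} to the case $u(0)=0$, and your argument supplies precisely the missing bookkeeping (namely $c_5=0$ from \eqref{ecu-5} and the evaluation of $c_1,c_2$ from \eqref{ecu-1}--\eqref{ecu-2} at $t=0$).
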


%We now compute the trace of the curvature of the Gauduchon connections of the almost diagonal metrics along the Anomaly flow.
Our next result describes the evolution of the trace ${\rm Tr}(Rm^{\tau}_t\wedge Rm^{\tau}_t)$ along the Anomaly flow, under the assumption for the initial metric $\omega_0$ to be almost diagonal.

\begin{proposition}\label{traza-tau-almost-diag}
Under the hypotheses of Theorem~\ref{almost-diag-prop}, the trace of the curvature of the Gauduchon connection $\nabla^{\tau}$ of $(G,\omega_t)$ satisfies 
$$
{\rm Tr}(Rm^{\tau}_t\wedge Rm^{\tau}_t) = \frac{C}{r(t)^4} \, \zeta^{12\bar{1}\bar{2}},
$$
where $C=C(\omega_0,\tau)$ 
%%%$C=C(\omega_0,\tau)=C(\rho,\lambda,x,y;c_1,c_2,c_5;\tau)$
is a constant depending only on the initial metric $\omega_0$ and the connection $\nabla^\tau$. 
\end{proposition}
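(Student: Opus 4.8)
The plan is to combine the explicit form of the solution from Theorem~\ref{almost-diag-prop} with the closed expression for ${\rm Tr}(\Omega^\tau\wedge\Omega^\tau)$ in Proposition~\ref{traza-tau}, and then to isolate the $t$-dependence through a scaling argument; no new curvature computation is needed, only careful bookkeeping.

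First I would specialize the adapted-basis coefficients of \eqref{new-coeffs-1} to the solution \eqref{almost-diagonal-Ft}. Since that solution is almost diagonal, $v(t)=z(t)=0$, and \eqref{new-coeffs-1} collapses to
\begin{equation*}
r_e^2=r(t)^2,\qquad s_e^2=\frac{c_2}{c_1}\,r(t)^2,\qquad u_e=\frac{c_5}{c_1}\,r(t)^2,\qquad k_e^2=k(t)^2=\frac{c_1c_2-|c_5|^2}{8}.
\end{equation*}
The crucial observation is that $k_e$ is \emph{constant} along the flow --- this is precisely the identity $k(t)=\sqrt{(c_1c_2-|c_5|^2)/8}$ established in Theorem~\ref{almost-diag-prop} --- whereas each of $r_e^2,s_e^2,u_{e1},u_{e2}$ is proportional to $r(t)^2$.

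Next I would substitute these into the formula of Proposition~\ref{traza-tau} and count the degree in the single variable $\mu:=r(t)^2$, regarding $\rho,\lambda,x,y$ (the fixed data of \eqref{J-nilp}) together with $k_e$ and $\tau$ as constants. Assigning weight $1$ to each of $r_e^2,s_e^2,u_{e1},u_{e2}$, the prefactor denominator $r_e^2s_e^2-|u_e|^2=\frac{c_1c_2-|c_5|^2}{c_1^2}\,r(t)^4$ has weight $2$, so $(r_e^2s_e^2-|u_e|^2)^2$ has weight $4$ and the whole prefactor $k_e^4/(r_e^2s_e^2-|u_e|^2)^2$ scales like $r(t)^{-8}$. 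A term-by-term inspection of the three bracketed blocks inside the braces shows each is homogeneous of weight $2$: in the first block every summand pairs fixed constants with a weight-two monomial ($s_e^4$, $s_e^2u_{e2}$, $|u_e|^2$, $u_{e1}^2-u_{e2}^2$, or $u_{e1}(s_e^2-\lambda u_{e2})$); the second block multiplies the weight-one factor $r_e^2$ by a weight-one bracket; and the third block multiplies the weight-two factor $r_e^4$ by constants. Hence the entire brace has weight $2$, and
\begin{equation*}
{\rm Tr}(Rm^{\tau}_t\wedge Rm^{\tau}_t)=r(t)^{-8}\cdot r(t)^{4}\cdot C\,\zeta^{12\bar1\bar2}=\frac{C}{r(t)^4}\,\zeta^{12\bar1\bar2}.
\end{equation*}

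Finally, the constant $C$ assembles $c_1,c_2,c_5$ (fixed by $\omega_0$), the structure constants $\rho,\lambda,x,y$ (fixed by $J$) and the parameter $\tau$, so $C=C(\omega_0,\tau)$ as asserted. The main obstacle is the homogeneity check in the preceding step: the conclusion hinges on \emph{every} summand of Proposition~\ref{traza-tau} having weight exactly $2$, since a single term of a different weight would spoil the clean $r(t)^{-4}$ scaling. This is a finite and mechanical verification once the weights $r_e^2,s_e^2,u_{e1},u_{e2}\mapsto 1$ and $k_e,\rho,\lambda,x,y\mapsto 0$ are fixed, but it is the place where an algebraic slip would be consequential.
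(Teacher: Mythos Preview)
Your proposal is correct and follows essentially the same approach as the paper: both specialize the adapted-basis coefficients \eqref{new-coeffs-1} to the almost diagonal solution \eqref{almost-diagonal-Ft}, observe that $k_e$ is constant while $r_e^2,s_e^2,u_e$ are proportional to $r(t)^2$, and then appeal to the explicit formula of Proposition~\ref{traza-tau}. The only difference is one of presentation: the paper simply asserts that ``the claim follows'' from the substitution, whereas you spell out the homogeneity bookkeeping (weight $4$ in the denominator, weight $2$ in the brace) that makes the $r(t)^{-4}$ scaling transparent.
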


\begin{proof}
The result is a consequence of Proposition~\ref{traza-tau}. Indeed, the coefficients $r_e,s_e,k_e$ and ${u_e=u_{e1}+i\,u_{e2}}$ appearing in Proposition \ref{traza-tau} are related to the coefficients of the metric $\omega_t$ via~\eqref{new-coeffs-1}. On the other hand, by means of Theorem~\ref{almost-diag-prop}, the metric $\omega_t$ holds almost diagonal and by
\eqref{new-coeffs-1} and \eqref{almost-diagonal-Ft} we get
\begin{equation}\label{coeff_alm}
r_e^2=r(t)^2,\quad s_e^2=\frac{c_2}{c_1} r(t)^2,\quad k_e^2=\frac{c_1c_2-|c_5|^2}{8},\quad u_e=\frac{c_5}{c_1} r(t)^2.
\end{equation}
Therefore, the claim follows by \eqref{coeff_alm} and the formula of the trace given in Proposition~\ref{traza-tau}.
%, since ${\rm Tr}(Rm_t^{\tau}\wedge Rm_t^{\tau})$ is a multiple of $\zeta^{12\bar{1}\bar{2}}$ and the $\frac{C}{r(t)^4}$, where $C$ does not depend on $t$ but only on $\omega_0$ and $\tau$.
\end{proof}

\section{The Anomaly flow with flat holomorphic bundle}\label{sec-model}

\noindent
We now focus on the Anomaly flow~\eqref{AF-triv}. In particular, we show that this flow always reduces to a single ODE, which we call {\em model problem}, and the second part of Theorem \ref{thm_A} will directly follow by Theorem \ref{model-prob} below. Moreover, the qualitative behaviour of the model problem will be investigated.\medskip

Let $G$ be a 6-dimensional $2$-step nilpotent Lie group with ${b_1\geq 4}$, equipped with a left-invariant non-parallelizable complex structure $J$. Let $\{\zeta^1,\zeta^2,\zeta^3\}$ be a left-invariant $(1,0)$-coframe on $(G,J)$ satisfying \eqref{J-nilp} and let
$\Psi$
be the left-invariant closed (3,0)-form defined in \eqref{Psi}, i.e. $\Psi:=\zeta^1\wedge\zeta^2\wedge\zeta^3$.

In view of Proposition \ref{AF-evo-F} and Theorem \ref{almost-diag-prop},  the coefficient $r(t)^2$ of the metric $\omega_t$ in \eqref{almost-diagonal-Ft} evolves as 
\begin{equation*}\label{model-prim}
\partial_t\, r(t)^2 =\frac{c_1}{4}\, K(t,\alpha',\tau)\,,
\end{equation*}
where the right-hand side is given by
$$
K(t,\alpha',\tau)\,\zeta^{12\bar 1\bar 2}= i \partial\overline\partial \omega_t - \frac{\alpha'}{4} {\rm Tr}(\Omega_t^\tau\wedge \Omega_t^\tau)\,.
$$
On the other hand, by means of \eqref{dem-ddbar} and Theorem \ref{almost-diag-prop}, we get 
$$
i\partial\overline\partial \omega_t= B\,\zeta^{12\bar 1\bar 2}\,, 
$$
for the constant $B=B(\omega_0)=\frac{c_1c_2\!-\!|c_5|^2}{16}(\rho+\lambda^2-2x)\in\RR$; while, by means of Proposition \ref{traza-tau-almost-diag}, we have 
$$
{\rm Tr}(\Omega_t^\tau\wedge \Omega_t^\tau) = \frac {C}{r(t)^4}\,\zeta^{12\bar 1\bar 2}\,,
$$
for a constant $C=C(\omega_0,\tau)\in \RR$. Therefore, we get
$$
\frac{c_1}{4}\, K(t,\alpha',\tau) = K_1+\frac{ K_2}{r(t)^4},
$$
where $K_1=\frac{c_1\, B}{4}$ and $K_2=- \alpha'\,\frac{c_1\,C}{16}$, and the following theorem holds.

\begin{theorem} \label{model-prob}
The Anomaly flow \eqref{AF-triv} is equivalent to the {\em model problem}
\begin{equation}\label{model}
\frac d{dt}\, r(t)^2= K_1+\frac{ K_2}{r(t)^4}\,, 
\end{equation}
where $ K_{1}, K_2\in\RR$ are constants depending on $K_1=K_1(\omega_0)$ and $K_2=K_2(\omega_0,\alpha',\tau)$. 
\end{theorem}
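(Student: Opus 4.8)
The plan is to assemble the statement directly from the structural results already in place, since the almost diagonal reduction has collapsed the full flow into a scalar evolution. First I would invoke Theorem~\ref{almost-diag-prop}: after passing to the preferable coframe provided by Lemma~\ref{almost-diag-lemma}, any solution $\omega_t$ is almost diagonal, so all of its coefficients are governed by the single function $r(t)^2$ through the relations $s(t)^2=\tfrac{c_2}{c_1}r(t)^2$, $u(t)=\tfrac{c_5}{c_1}r(t)^2$, $v(t)=z(t)=0$, with $k(t)$ a positive constant. In particular the metric is a genuine one-parameter family parametrized by $r(t)^2$ alone, so it suffices to extract a single scalar ODE for this quantity.

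Next I would reduce the left-hand side of the flow. Equation~\eqref{ecu-main} reads $\frac{d}{dt}\big(\Vert\Psi\Vert_{\omega_t}(r^2s^2-|u|^2)\big)=2K(t,\alpha',\tau)$, and substituting the almost diagonal relations together with the explicit norm $\Vert\Psi\Vert_{\omega_t}=\tfrac{8c_1}{(c_1c_2-|c_5|^2)r(t)^2}$ from Theorem~\ref{almost-diag-prop} shows that $\Vert\Psi\Vert_{\omega_t}(r^2s^2-|u|^2)=\tfrac{8}{c_1}r(t)^2$ is an exact multiple of $r(t)^2$. This immediately yields $\frac{d}{dt}r(t)^2=\tfrac{c_1}{4}K(t,\alpha',\tau)$, and the entire problem is transferred to identifying the scalar $K$.

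I would then compute $K$ by splitting it into its two natural pieces, exactly as in Proposition~\ref{AF-evo-F}, namely $K\,\zeta^{12\bar1\bar2}=i\partial\overline\partial\omega_t-\tfrac{\alpha'}{4}\mathrm{Tr}(\Omega_t^\tau\wedge\Omega_t^\tau)$. The first piece is handled by \eqref{dem-ddbar}, which gives $i\partial\overline\partial\omega_t=B\,\zeta^{12\bar1\bar2}$ with $B=\tfrac{c_1c_2-|c_5|^2}{16}(\rho+\lambda^2-2x)$ independent of $t$; the second piece is handled by Proposition~\ref{traza-tau-almost-diag}, which gives $\mathrm{Tr}(\Omega_t^\tau\wedge\Omega_t^\tau)=\tfrac{C}{r(t)^4}\zeta^{12\bar1\bar2}$ with $C=C(\omega_0,\tau)$ constant. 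Setting $K_1=\tfrac{c_1B}{4}$ and $K_2=-\tfrac{\alpha'c_1C}{16}$ then reproduces the model problem~\eqref{model}.

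Since all the genuine computational effort is already packaged into the curvature trace formula of Proposition~\ref{traza-tau} and its almost diagonal specialization, the only real subtlety here is bookkeeping: one must verify that the left-hand side $\partial_t(\Vert\Psi\Vert_{\omega_t}\omega_t^2)$ truly collapses to a scalar multiple of $\partial_t r(t)^2$ and does not conceal a further independent component. The constancy of $k(t)$ together with the proportionality of $s^2$ and $u$ to $r^2$ are precisely what force this collapse, so the main point to check is that the remaining relations \eqref{ecu-1}--\eqref{ecu-5} are automatically consistent with the single-variable ansatz rather than imposing extra constraints, so that the reduction to \eqref{model} is a genuine equivalence and not merely a necessary condition.
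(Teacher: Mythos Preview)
Your proposal is correct and follows essentially the same approach as the paper: reduce to the almost diagonal form via Theorem~\ref{almost-diag-prop}, use \eqref{ecu-main} together with the explicit norm of $\Psi$ to obtain $\frac{d}{dt}r(t)^2=\frac{c_1}{4}K(t,\alpha',\tau)$, and then identify the two contributions to $K$ from \eqref{dem-ddbar} and Proposition~\ref{traza-tau-almost-diag}, yielding the constants $K_1=\frac{c_1 B}{4}$ and $K_2=-\frac{\alpha' c_1 C}{16}$. Your final remark about the consistency of \eqref{ecu-1}--\eqref{ecu-5} is already absorbed into Theorem~\ref{almost-diag-prop}, which shows that the almost diagonal ansatz with constant $k(t)$ and $s^2,u$ proportional to $r^2$ is the actual solution, not merely a compatible one.
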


\subsection{Qualitative behaviour of the model problem}\label{qualit-model-prob}\hfill\medskip

\noindent We now investigate the qualitative behaviour of the model problem \eqref{model}, which can be rewritten as
\begin{equation}\label{model-ref}
h'(t) = K_1+\frac{K_2}{h(t)^2}\,,\qquad h(t)> 0\,.
\end{equation}
A solution $h(t)$ to \eqref{model-ref} is said to be {\em immortal}, {\em eternal} or {\em ancient} if the defining interval $(T^-,T^+)$ is equal to $(-\varepsilon,+\infty)$, $(-\infty,+\infty)$ or $(-\infty,\varepsilon)$ for some $\varepsilon>0$, respectively.\medskip

When either $K_1=0$ or $K_2=0$ the ODE \eqref{model-ref} can be explicitly solved, otherwise we work as follows.

\subsubsection*{$\bullet$ $K_1>0$ and $K_2>0$} \hfill\medskip

\noindent Under these assumptions, \eqref{model-ref} does not admit any stationary point. Nonetheless, we have the following 

\begin{proposition}\label{pos-pos}
Any solution $h(t)$ to the model problem \eqref{model-ref} is immortal. In particular, $h(t)\sim K_1\cdot t$ as $t\to +\infty$.
\end{proposition}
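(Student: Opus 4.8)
The plan is to treat \eqref{model-ref} as an autonomous, separable first-order ODE and to analyze its maximal solution directly. Writing $f(h)=K_1+K_2/h^2$, I first observe that $f$ is smooth and strictly positive on $(0,+\infty)$, with $K_1\le f(h)<+\infty$ for every $h>0$; hence any solution is strictly increasing in $t$, and by the standard existence--uniqueness theory it extends to a unique maximal solution on an open interval $(T^-,T^+)\ni 0$, where I normalize the initial datum as $h(0)=h_0>0$.

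Next I would settle forward existence, i.e.\ $T^+=+\infty$. Since $h$ is increasing, $h(t)\ge h_0$ for $t\ge 0$, so $f(h(t))\le K_1+K_2/h_0^2$; this gives the linear two-sided bound $h_0\le h(t)\le h_0+(K_1+K_2/h_0^2)\,t$. Consequently, on every finite interval $[0,T]$ the solution remains inside the compact set $\big[h_0,\,h_0+(K_1+K_2/h_0^2)T\big]\subset(0,+\infty)$, on which $f$ is smooth, so it can neither reach $0$ nor blow up in finite forward time; by the escape lemma $T^+=+\infty$.

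I would then prove the solution is immortal by showing $T^-$ is finite. Separating variables gives $dt=\tfrac{h^2}{K_1h^2+K_2}\,dh$, so for $0<h<h_0$ the (positive) time elapsed between the levels $h$ and $h_0$ is
\begin{equation*}
t(h_0)-t(h)=\int_{h}^{h_0}\frac{\xi^2}{K_1\xi^2+K_2}\,d\xi .
\end{equation*}
The integrand is continuous and bounded between $0$ and $1/K_1$ on $[0,h_0]$, so as $h\to0^+$ this converges to the finite value $\int_{0}^{h_0}\tfrac{\xi^2}{K_1\xi^2+K_2}\,d\xi$. Normalizing $t(h_0)=0$, we obtain $T^-=-\int_{0}^{h_0}\tfrac{\xi^2}{K_1\xi^2+K_2}\,d\xi$, which is finite and negative. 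Hence $h$ decreases to $0$ as $t\downarrow T^-$ with $T^-$ finite, and the defining interval is $(T^-,+\infty)$ with $T^-<0$, which is precisely the immortal condition.

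Finally, for the asymptotics I would integrate the equation to get $h(t)=h_0+K_1t+K_2\int_0^t h(\tau)^{-2}\,d\tau$. The forward linear bound yields $h(\tau)\ge h_0+K_1\tau$, so the remainder is dominated by the convergent integral $K_2\int_0^{\infty}(h_0+K_1\tau)^{-2}\,d\tau<+\infty$; therefore $h(t)=K_1t+O(1)$, and in particular $h(t)\sim K_1 t$ as $t\to+\infty$. The whole argument is elementary; the only step requiring genuine care is the backward direction, where one must check that the blow-up of $f$ as $h\to0^+$ is \emph{integrable} (equivalently $dt/dh\to0$), so that $T^-$ is finite rather than $-\infty$ — otherwise the solution would be eternal rather than immortal.
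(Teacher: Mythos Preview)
Your proof is correct. The forward-existence step coincides with the paper's (the linear upper bound from $h'\le K_1+K_2/h_0^2$), but for the backward direction and the asymptotics you take a different, somewhat sharper route. For backward finiteness the paper argues by contradiction: since $h'\ge K_1>0$, an eternal solution would satisfy $h(0)-h(t)\ge K_1|t|\to\infty$ as $t\to-\infty$, contradicting $h>0$. You instead separate variables and identify $T^-$ explicitly as the convergent integral $-\int_0^{h_0}\xi^2/(K_1\xi^2+K_2)\,d\xi$; this is more direct and even yields the value of $T^-$. For the asymptotics, the paper observes that $h(t)\to\infty$ forces $h'(t)\to K_1$, hence $h(t)\sim K_1 t$; your integral remainder estimate gives the stronger conclusion $h(t)=K_1 t+O(1)$. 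One minor presentational point: the lower bound $h(\tau)\ge h_0+K_1\tau$ that you invoke in the last step is not the ``two-sided bound'' established earlier (which only gave $h\ge h_0$) but rather the immediate consequence of $h'\ge K_1$; it would read more smoothly to record this one-line observation alongside the upper bound.
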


\begin{proof}
Let $h(t)$ be a solution to the model problem \eqref{model-ref}. Since
$$
h'(t)= K_1 + \frac{K_2}{h(t)^2}>0\,,
$$
it follows that $h(t)\geq h(0)$, for every $t\in[0,T^+)$. On the other hand,
$$
h'(t)\leq K_1 + \frac{K_2}{h(0)^2}
$$
and the long-time existence follows, since $h(t)\leq c\, t+h(0)$ with $c:=K_1 + \frac{K_2}{h(0)^2}$. 

Let us now suppose by contradiction that $h'(t)\to 0$ as $t\to+ \infty$. Then, this would imply
$$
\lim_{t\to \infty}\left( K_1+\frac{K_2}{h(t)^2}\right)=0\,,
$$
which is not possible since $K_1,K_2>0$. Therefore, we have
$$
\lim_{t\to \infty} h'(t) =K_1
$$
and hence $h(t)\sim K_1\cdot t$ as $t\to +\infty$. Finally, a similar argument shows that if the solution exists backward in time for any $t<0$, then 
$$h(t)\sim K_1\cdot t\,,\quad \text{as}\,\,\, t\to-\infty\,,$$ 
which is not possible since $h(t)>0$.
\end{proof}

\subsubsection*{$\bullet$ $K_1>0$ and $K_2<0$} \hfill\medskip

\noindent Let us denote by $h_0:=\sqrt{-K_2/K_1}$. Then, we have 

\begin{proposition}\label{pos-neg}
Let $h(t)$ be a solution to the model problem \eqref{model-ref}. It follows that
\begin{itemize}\setlength\itemsep{0.5em}
\item[\rm (i)] if $h(0)=h_0$, then the solution is stationary;
\item[\rm (ii)] if $h(0)>h_0$, then the solution is eternal and $h(t)\sim K_1\cdot t$ as $t\to +\infty$;
\item[\rm (iii)] if $h(0)<h_0$, then the solution is ancient.
\end{itemize}
Furthermore, any solution detects the stationary point as $t\to -\infty$.
\end{proposition}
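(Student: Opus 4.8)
The plan is to read off all the qualitative information directly from the sign of the right-hand side of the autonomous ODE \eqref{model-ref}. Writing $f(h) = K_1 + K_2/h^2 = K_1(h^2 - h_0^2)/h^2$, with $h_0 = \sqrt{-K_2/K_1}$, and using $K_1 > 0$ and $K_2 < 0$, one reads off that $f(h_0) = 0$, while $f(h) > 0$ for $h > h_0$ and $f(h) < 0$ for $0 < h < h_0$. Since $f$ is smooth on $(0, +\infty)$, the Picard--Lindel\"of theorem guarantees local existence and uniqueness; in particular no solution can cross the constant solution $h \equiv h_0$. This already settles (i): the function $h \equiv h_0$ solves \eqref{model-ref}, and by uniqueness it is the \emph{only} solution with $h(0) = h_0$.

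For (ii) and (iii) I would first establish monotonicity. If $h(0) > h_0$, then $h$ starts where $f > 0$, and the no-crossing property keeps $h(t) > h_0$ throughout its interval of existence, so $h$ is strictly increasing; symmetrically, if $h(0) < h_0$, then $0 < h(t) < h_0$ and $h$ is strictly decreasing. The second step is to recast the questions about the maximal interval $(T^-, T^+)$ as integral estimates by separating variables,
\[
t - t_0 = \int_{h(t_0)}^{h(t)} \frac{s^2\,ds}{K_1 s^2 + K_2}\,.
\]
Every assertion about finiteness of $T^{\pm}$ then reduces to the convergence or divergence of this integral at the appropriate endpoint.

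The main obstacle will be the endpoint analysis of this integral, which is really the only delicate point. Near $s = h_0$ the denominator $K_1 s^2 + K_2 = K_1(s - h_0)(s + h_0)$ vanishes to first order, so the integrand behaves like a constant multiple of $(s - h_0)^{-1}$ and the integral diverges logarithmically; thus approaching $h_0$ takes infinite time. This single estimate yields backward completeness $T^- = -\infty$ in both (ii) and (iii), and simultaneously proves the final assertion that every non-stationary solution satisfies $h(t) \to h_0$ as $t \to -\infty$. For the forward behaviour I would split the two cases. In (iii), near $s = 0$ the integrand is $\sim s^2/K_2$, which is integrable, so the decreasing solution reaches $0$ in finite forward time, giving $T^+ < +\infty$; combined with $T^- = -\infty$ this shows the solution is ancient. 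In (ii), as $s \to +\infty$ the integrand tends to the positive constant $1/K_1$, so the integral diverges at $+\infty$, forcing $T^+ = +\infty$; the solution is therefore eternal, and since $h \to +\infty$ gives $h'(t) = K_1 + K_2/h(t)^2 \to K_1$, one concludes $h(t) \sim K_1\, t$ as $t \to +\infty$ by a standard L'H\^opital argument.
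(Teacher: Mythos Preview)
Your argument is correct and complete. The paper takes a somewhat different route: rather than separating variables and analysing the resulting time integral, it works directly with differential inequalities and comparison arguments. For instance, in case~(ii) it bounds $h'(t)\le K_1$ to obtain the linear upper bound $h(t)\le K_1 t + h(0)$ and hence forward completeness, and then argues by contradiction that $h'(t)\to 0$ is impossible (since $h(t)>h_0$), so $h'(t)\to K_1$; backward convergence to $h_0$ is deduced simply from monotonicity together with the uniqueness of the stationary point. In case~(iii) it bounds $h'(t)$ above by the negative constant $h'(0)$ to force finite forward extinction time. Your approach via the single integral $\int s^2/(K_1 s^2+K_2)\,ds$ is more systematic: one endpoint analysis (logarithmic divergence at $h_0$, integrability at $0$, divergence like $s/K_1$ at $+\infty$) settles all the completeness assertions and the backward asymptotics uniformly. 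The paper's argument is marginally more elementary in that it never writes down an integral, but it pays for this with separate ad hoc estimates for each subcase.
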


\begin{proof} Let $h(t)$ be the solution to \eqref{model-ref}. Then, a direct computation yields that $h_0$ is the unique stationary point to the flow, and the first claim follows.

Now, let us suppose $h(0)>h_0$. Then, there exists $\varepsilon>0$ such that ${h(0)=\sqrt{-\tfrac{K_2}{K_1}+\varepsilon}}$. Therefore, we have
$$
h'(0) = \frac{\varepsilon K_1^2}{-K_2+\varepsilon K_1}>0\,,
$$
and hence $h(t)'>0$ for every $t\in(T^-,T^+)$. On the other hand,
$$
h'(t) \leq K_1\quad \Longrightarrow \quad h(t)\leq K_1\,t+h(0)\,, \quad \text{for any}\,\,\, t\geq0\,,
$$
and the long-time existence follows. Moreover, since $h(t)$ is always increasing and $h_0$ is the unique stationary point to the flow, it follows $h(t)\to h_0$ as $t\to-\infty$. Thus, the solution $h(t)$ is eternal. Finally, let us assume by contradiction that $h'(t)\to 0$ as $t\to +\infty$. Then, this would be equivalent to require
$$
\lim_{t\to \infty} K_1+\frac{K_2}{h(t)^2}=0\,,
$$
which is not possible since $h(0)>h_0$, and hence
$$
\lim_{t\to \infty} h'(t) =K_1
$$
proves the second claim.

Now, let us assume ${h(0)=\sqrt{-\frac{K_2}{K_1}-\varepsilon}}<h_0$ for some $\varepsilon>0$. Then, a direct computation yields that
%\begin{equation}\label{dis-ex1}
$$
h'(0) = \frac{-\varepsilon K_1^2}{-K_2+\varepsilon K_1}<0\,,
$$
%\end{equation}
which in turn implies $h'(t)<0$ for every $t\in(T^-,T^+)$. On the other hand, it follows 
$$
h(t)\leq \frac{-\varepsilon K_1^2}{-K_2+\varepsilon K_1}\,t+h(0)\,,\qquad \text{for any $t\geq0$}\,,
$$
and hence $T^+<+\infty$. Moreover, since $h(t)$ is decreasing, we have
%\begin{equation}\label{lim-ex1}
$$
\lim_{t\to T^+} h'(t) = \lim_{t\to T^+} \left(K_1+\frac{K_2}{h(t)^2}\right)=-\infty\,.
$$
%\end{equation}
Finally, since $h(t)$ is always decreasing and there exists a unique stationary solution $h_0$ to the flow, we have that $h(t)\to h_0$ as $t\to-\infty$ and the last claim follows.
\end{proof}

%\begin{remark}\rm As far as the authors know, the Anomaly flow is the second example of a metric flow admitting invariant solutions with both $T^+<+\infty$ and $T^+=+\infty$ on the same homogeneous space (the first one has been found in \cite{AL}, in the context of the pluriclosed flow).
%\end{remark}

\subsubsection*{$\bullet$ $K_1<0$ and $K_2<0$} \hfill\medskip

\noindent Under these assumptions, we have

\begin{proposition}\label{neg-neg}
Any solution $h(t)$ to \eqref{model-ref} is ancient. In particular, $h(t)\sim -K_1\cdot t$ as $t\to -\infty$.
\end{proposition}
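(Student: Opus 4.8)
The plan is to reduce this case to the positive--positive case of Proposition~\ref{pos-pos} via the time--reversal symmetry of the model problem, and to verify the two endpoints of the maximal interval directly as a cross--check. First I would record the basic qualitative fact: since $K_1<0$ and $K_2<0$, the right--hand side $K_1+K_2/h^2$ of \eqref{model-ref} is strictly negative for every $h>0$, so along any solution $h'(t)<0$ on the whole maximal interval $(T^-,T^+)$ and $h$ is strictly decreasing; in particular \eqref{model-ref} has no stationary point.

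For the reduction I would set $g(t):=h(-t)$. Then $g'(t)=-h'(-t)=-K_1-K_2/g(t)^2$, so $g$ solves \eqref{model-ref} with the constants replaced by $\widetilde K_1:=-K_1>0$ and $\widetilde K_2:=-K_2>0$. By Proposition~\ref{pos-pos}, $g$ is immortal, with maximal interval $(-\varepsilon,+\infty)$, and $g(t)\sim\widetilde K_1\,t$ as $t\to+\infty$. Undoing the substitution via $h(t)=g(-t)$ shows that the maximal interval of $h$ is $(-\infty,\varepsilon)$, i.e.\ $h$ is ancient, and that $h(t)\sim K_1\,t$ as $t\to-\infty$.

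As a direct confirmation, in the spirit of Proposition~\ref{pos-pos} I would argue as follows. Since $K_2/h^2<0$ one always has $h'(t)<K_1<0$; hence forward in time $h(t)<h(0)+K_1 t$ for $t\ge 0$, and positivity of $h$ forces $T^+\le h(0)/|K_1|<+\infty$ (as $h\to 0^+$ the term $K_2/h^2$ sends $h'\to-\infty$, so the solution leaves the domain $h>0$ in finite time). Backward in time, $h(t)\ge h(0)$ for $t\le 0$ gives $K_1+K_2/h(0)^2\le h'(t)<K_1$, so $h'$ is trapped between two negative constants; integrating, $h(t)\ge h(0)+K_1 t\to+\infty$ as $t\to-\infty$, whence $T^-=-\infty$ and the solution is ancient. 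Finally, $h\to+\infty$ forces $K_2/h^2\to 0$, so $h'(t)\to K_1$ and therefore $h(t)\sim K_1\,t$ as $t\to-\infty$.

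The computations are entirely routine; the one point that needs genuine care is to identify which endpoint of the maximal interval is finite, namely the finite forward extinction time $T^+$ at which $h$ meets the boundary $h=0$ of its domain, as opposed to the infinite backward persistence $T^-=-\infty$. The time--reversal reduction to Proposition~\ref{pos-pos} handles this bookkeeping automatically, so it is the route I would prefer.
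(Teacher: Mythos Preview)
Your argument is correct and matches the paper's approach: the paper's entire proof is the single sentence ``The proof of this result can be easily recovered using the same arguments as in Proposition~\ref{pos-pos},'' and your direct confirmation is precisely that adaptation, while your time--reversal substitution $g(t)=h(-t)$ is a clean shortcut that makes the reduction to Proposition~\ref{pos-pos} explicit rather than merely analogical. One small point: your computation yields $h(t)\sim K_1\,t$ as $t\to-\infty$ (positive, since $K_1<0$ and $t<0$), whereas the stated asymptotic is $-K_1\cdot t$, which would be negative; your version is the internally consistent one.
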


The proof of this result can be easily recovered using the same arguments as in Proposition \ref{pos-pos}.

\subsubsection*{$\bullet$ $K_1<0$ and $K_2>0$} \hfill\medskip

\noindent Arguing in the same way of Proposition \ref{pos-neg}, we get

\begin{proposition}\label{neg-pos}
Let $h(t)$ be a solution to \eqref{model-ref}. It follows that
\begin{itemize}\setlength\itemsep{0.5em}
\item[\rm (i)] if $h(0)=h_0$, then the solution is stationary;
\item[\rm (ii)] if $h(0)>h_0$, then the solution is eternal and $h(t)\sim -K_1\cdot t$ as $t\to -\infty$;
\item[\rm (iii)] if $h(0)<h_0$, then the solution is immortal.
\end{itemize}
Furthermore, any solution detects the stationary point as $t\to +\infty$.
\end{proposition}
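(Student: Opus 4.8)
The plan is to mirror the proof of Proposition~\ref{pos-neg}, the key difference being that with $K_1<0$ and $K_2>0$ the unique equilibrium $h_0=\sqrt{-K_2/K_1}$ is now \emph{attracting} rather than repelling. First I would record the sign of the right-hand side $F(h):=K_1+K_2/h^2$ of \eqref{model-ref}. A direct computation gives $F(h_0)=0$, while $F(h)<0$ for $h>h_0$ and $F(h)>0$ for $0<h<h_0$; equivalently $F'(h)=-2K_2/h^3<0$, so $h_0$ is the unique stationary point and it is stable. This already yields part~(i): if $h(0)=h_0$ the solution is constant.

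For part~(ii) I would set $h(0)^2=-K_2/K_1+\varepsilon$ with $\varepsilon>0$ and compute $h'(0)=\varepsilon K_1^2/(-K_2+\varepsilon K_1)<0$, so by the sign analysis $h$ is strictly decreasing and stays above $h_0$ throughout its interval. Being monotone and bounded below by $h_0>0$, the solution can neither blow up nor reach the boundary $h=0$ in forward time, whence $T^+=+\infty$ and $h(t)\to h_0$ as $t\to+\infty$. Backwards $h$ is increasing and unbounded; since $F(h)\to K_1$ as $h\to+\infty$, its slope stays bounded, so $h$ can only escape to $+\infty$ as $t\to-\infty$, giving $T^-=-\infty$ and an eternal solution. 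Ruling out $h'\to0$ exactly as in Proposition~\ref{pos-neg} (it would force $h\to h_0$, contradicting $h\geq h(0)>h_0$), one gets $h'(t)\to K_1$ as $t\to-\infty$, which is the linear asymptotic asserted in~(ii).

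For part~(iii) I would instead take $h(0)^2=-K_2/K_1-\varepsilon$, giving $h'(0)=-\varepsilon K_1^2/(-K_2-\varepsilon K_1)>0$ for small $\varepsilon$, so $h$ is strictly increasing and confined to $(0,h_0)$. Forward it converges monotonically to $h_0$, hence $T^+=+\infty$. Backwards $h$ decreases toward $0$, and the essential point is that it leaves the domain in \emph{finite} backward time: separating variables gives $dt=\frac{h^2\,dh}{K_1h^2+K_2}$, whose integrand is $\sim h^2/K_2$ near $h=0$ and therefore integrable, so $T^->-\infty$ and the solution is immortal. The final assertion, that every solution detects the stationary point as $t\to+\infty$, is then immediate: in each case $h$ is eventually monotone and bounded with $h_0$ the only equilibrium, forcing $h(t)\to h_0$.

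The only genuinely delicate point is the finite-time boundary behaviour in part~(iii): one must check that the blow-up of $F(h)=K_1+K_2/h^2$ as $h\to0$ is strong enough to make the backward lifespan finite, which is precisely the integrability of $h^2/(K_1h^2+K_2)$ at $h=0$ (note the denominator stays positive on $(0,h_0)$). Everything else is a sign-tracking adaptation of the arguments already carried out for Propositions~\ref{pos-pos} and~\ref{pos-neg}.
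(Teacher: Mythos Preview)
Your proposal is correct and follows precisely the approach the paper intends: the paper gives no explicit proof of this proposition, simply stating ``Arguing in the same way of Proposition~\ref{pos-neg}, we get\ldots'', and your argument is exactly that adaptation with the signs reversed. The one small deviation is in part~(iii), where you establish finite backward time via separation of variables and integrability of $h^2/(K_1h^2+K_2)$ near $h=0$; the direct analogue of the paper's method in Proposition~\ref{pos-neg}(iii) would instead use the linear bound $h(-s)\leq h(0)-h'(0)\,s$ (valid since $h'\geq h'(0)>0$ throughout), which also forces $h$ to reach $0$ in finite backward time. Both arguments are equally valid, and your qualifier ``for small $\varepsilon$'' in (iii) is in fact unnecessary, since $h'(0)>0$ holds for every admissible $\varepsilon\in(0,-K_2/K_1)$.
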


\subsection{The sign of $K_1$ and its relation to the Fu-Wang-Wu conformal invariant}\label{signosdeKs}\hfill\medskip

\noindent 
We now investigate the relation between the constant $K_1$ appearing in the model problem \eqref{model} and the conformal invariant of $\omega_0$ introduced and studied by Fu, Wang and Wu in \cite{FWW}. We also study the sign of $K_1$ in our class of nilpotent Lie groups.\smallskip

Let $X$ be a compact $n$-dimensional complex manifold and $\omega$ a Hermitian metric on $X$. In \cite{FWW}, the notion of Gauduchon metric has been generalized by the so-called \emph{$k$-th Gauduchon equation}
$$
\partial \overline \partial \omega^k \wedge \omega^{n - k -1} = 0\,,\qquad 1\leq k\leq n-1\,.
$$
Then, since the $k$-th Gauduchon equation may not admit a solution, Fu, Wang and Wu considered the equation (in the conformal class of $\omega$) given by
\begin{equation}\label{gen_gaud}
\frac{i}{2}\, \partial \overline \partial (e^v \omega^k) \wedge \omega^{n - k -1} = \gamma_k (\omega)\, e^v \omega^n\,,\qquad 1\leq k\leq n-1\,,
\end{equation}
proving that there always exist a unique constant $\gamma_k (\omega)$ and a function $v \in {\mathcal C}^{\infty} (X)$ (unique up to a constant) satisfying \eqref{gen_gaud}.
%%%In particular, given a K\"ahler metric $\omega$ on $X$, it follows that
%%%$\gamma_k(\omega) =0$ and $v$ is constant on $X$ for any $1 \leq k \leq n - 1$.
Moreover, the constant $\gamma_k(\omega)$ is invariant under biholomorphisms and it smoothly depends on the metric~$\omega$, and its sign is invariant in the conformal class of~$\omega$ \cite{FWW}.\smallskip

Now, let $(X,\omega)$ be a compact non-K\"ahler Hermitian manifold. In view of \cite[Lemma 3.7]{IP} and \cite[Proposition 3.8]{IP}, for any $1\leq k\leq n-2$ it follows that\smallskip
\begin{itemize}
\item[\rm (i)] if $\omega$ is balanced, then the constant $\gamma_k (\omega)>0$;\medskip
\item[\rm (ii)] if $\omega$ is locally conformally K\"ahler, then the constant $\gamma_k (\omega)<0$.
\end{itemize}
Therefore, we can apply these results to compute the sign of $K_1=K_1(\omega_0)$ in the model problem~\eqref{model}. Indeed, by \cite[Proposition 2.7]{LUV}, any left-invariant Hermitian metric $\omega_0$ on $(G,J)$ given by \eqref{2forma} satisfies 
$$
\frac{i}{2}\,\partial\db \omega_0 \wedge \omega_0
= \frac{k_0^4}{8i\,\det\omega_0} \left(\rho+\lambda^2 - 2\,x \right)\, \omega_0^3\,,
$$
and hence
$$
\gamma_1(\omega_0)=\frac{k_0^4}{8i\,\det\omega_0} \left(\rho+\lambda^2 - 2\,x \right)\,.
$$ 
On the other hand, since $K_1(\omega_0)=\frac{c_1\,k_0^2}{8}(\rho+\lambda^2-2x)$ in the model problem \eqref{model}, we get 
$$
K_1(\omega_0)=\frac{c_1\,i\, \det\omega_0}{k_0^2}\, \gamma_1(\omega_0).
$$
In particular, the sign of $K_1(\omega_0)$ is equal to the one of $\gamma_1(\omega_0)$, which is an invariant of the conformal class of $\omega_0$. Actually, in our context we have that
$$
{\rm sign}\, K_1={\rm sign}\, (\rho+\lambda^2-2x),
$$
and hence it only depends on the complex structure $J$. This fact for $\gamma_1$ was first noticed in 
\cite{FU}. Moreover, since an invariant Hermitian metric on a complex nilmanifold of complex dimension 3 is $1$-st Gauduchon if and only if it is pluriclosed \cite[Proposition 3.3]{FU}, we have the following proposition.

\begin{proposition} 
The sign of $K_1$ in the model problem \eqref{model} only depends on the complex structure~$J$ on $G$. Moreover: 
\begin{itemize}\setlength\itemsep{0.5em}
\item[(i)] If $\omega_0$ is balanced, then $K_1>0$.
\item[(ii)] If $\omega_0$ is locally conformally K\"ahler, then $K_1<0$. 
\item[(iii)] The metric $\omega_0$ is pluriclosed if and only if $K_1=0$.
\end{itemize}
\end{proposition}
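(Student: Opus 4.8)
The plan is to read off the sign of $K_1$ from the explicit expressions for $K_1(\omega_0)$ and $\gamma_1(\omega_0)$ already derived above, and then to feed in the known sign of the Fu-Wang-Wu invariant in the balanced and locally conformally K\"ahler cases. For the opening assertion, since $K_1(\omega_0)=\frac{c_1\,k_0^2}{8}(\rho+\lambda^2-2x)$ with $c_1>0$ and $k_0^2>0$, the sign of $K_1$ coincides with the sign of $\rho+\lambda^2-2x$. The three numbers $\rho,\lambda,x$ are precisely the structure coefficients in the complex equations \eqref{J-nilp}, so they are fixed by the complex structure $J$ and are independent of the chosen Hermitian metric; hence ${\rm sign}\,K_1$ depends only on $J$.

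For (i) and (ii) I would use the identity $K_1(\omega_0)=\frac{c_1\,i\,\det\omega_0}{k_0^2}\,\gamma_1(\omega_0)$ established above. Its prefactor is strictly positive, because $c_1>0$, $k_0^2>0$, and $i\,\det\omega_0>0$ by the positivity condition \eqref{F-non-deg-2}; therefore ${\rm sign}\,K_1={\rm sign}\,\gamma_1(\omega_0)$. Our nilmanifold $\Gamma\backslash G$ is compact and non-K\"ahler — a nilmanifold carries a K\"ahler metric only when it is a torus, which is excluded here since $G$ is non-abelian $2$-step nilpotent and $J$ is non-parallelizable — so the results of \cite{IP} apply: if $\omega_0$ is balanced then $\gamma_1(\omega_0)>0$ by \cite[Lemma 3.7]{IP}, giving $K_1>0$, while if $\omega_0$ is locally conformally K\"ahler then $\gamma_1(\omega_0)<0$ by \cite[Proposition 3.8]{IP}, giving $K_1<0$.

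Finally, for (iii) I would argue directly from \eqref{dem-ddbar}, which reads $2i\,\partial\db\,\omega_0=k_0^2(\rho+\lambda^2-2x)\,\zeta^{12\bar1\bar2}$. Thus $\omega_0$ is pluriclosed, i.e. $\partial\db\,\omega_0=0$, exactly when $\rho+\lambda^2-2x=0$, which by the first assertion is equivalent to $K_1=0$; equivalently, one may invoke \cite[Proposition 3.3]{FU}, which states that an invariant metric in this setting is $1$-st Gauduchon (that is, $\gamma_1(\omega_0)=0$) if and only if it is pluriclosed. Since all the computational ingredients are already assembled in the preceding discussion, I do not expect a genuine obstacle here; the only point requiring care is verifying the compact non-K\"ahler hypothesis needed to apply the sign results of \cite{IP}.
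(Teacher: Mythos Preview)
Your proposal is correct and follows essentially the same approach as the paper: both rely on the explicit formula $K_1=\frac{c_1 k_0^2}{8}(\rho+\lambda^2-2x)$ to see the sign depends only on $J$, invoke the identity $K_1=\frac{c_1\,i\,\det\omega_0}{k_0^2}\,\gamma_1(\omega_0)$ together with the results of \cite{IP} for (i) and (ii), and use either \eqref{dem-ddbar} directly or \cite[Proposition~3.3]{FU} for (iii). Your extra remark verifying the compact non-K\"ahler hypothesis for \cite{IP} is a welcome point of care that the paper leaves implicit.
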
 
%We mention that the sign of $K_1$ was studied in \cite{LUV} for general 6-dimensional nilmanifolds with invariant complex structure.\smallskip

%Note that the converse of (i) or (ii) do not hold. 
In Table~\ref{table-sign}, we provide the classification of the Lie groups admitting a complex structure satisfying~\eqref{J-nilp}, together with the sign of the invariant~$K_1$.

The first column of Table~\ref{table-sign} describes the nilpotent Lie algebra associated to the Lie group.
Here we use the notation for which the algebras are named as $\frn_k$ and then described (see e.g. \cite{U}).
Moreover, we denote by $N_k$ the Lie group corresponding to $\frn_k$ (second column of the table). About the other columns,
we use the following convention.
The symbol~``${\checkmark}$'' means that the sign of~$K_1$ is the one given in the table for \emph{any} complex structure on the corresponding Lie group $N_k$,
whereas~``${\checkmark}\!\!\!\! \ _{(J)}$'' means that there exist complex structures~$J$ on $N_k$ such that the sign of~$K_1$ is the one described by the column. Therefore, different complex structures may lead to different sings on the same group $N_k$.
Finally, we use~``$-$'' to denote that there are no 
complex structures of the given sign.

\medskip

\begin{table}[!ht]
\renewcommand{\arraystretch}{1.2}
%\resizebox{\textwidth}{!}{
\begin{tabular}{|l|c|c|c|c|}
\hline
%%%\cite{FU}
Lie algebra & Lie group  & \ $K_1<0$ \ & \ \ $K_1=0$ \ \ & \ $K_1>0$ \  \\
\hline
$\frn_{2}=(0,0,0,0,12,34)$ & $N_2$ & $\checkmark\!\!\!\! \ _{(J)}$ & $\checkmark\!\!\!\! \ _{(J)}$ & $\checkmark\!\!\!\! \ _{(J)}$  \\
\hline
$\frn_{3}=(0,0,0,0,0,12\!+\!34)$ & $N_3$ & $\checkmark\!\!\!\! \ _{(J)}$ & $-$ & $\checkmark\!\!\!\! \ _{(J)}$  \\
\hline
$\frn_{4}=(0,0,0,0,12,14\!+\!23)$ & $N_4$ & $\checkmark\!\!\!\! \ _{(J)}$ & $\checkmark\!\!\!\! \ _{(J)}$ & $\checkmark\!\!\!\! \ _{(J)}$  \\
\hline
$\frn_{5}=(0,0,0,0,13\!+\!42,14\!+\!23)$ & $N_5$ & $\checkmark\!\!\!\! \ _{(J)}$ & $\checkmark\!\!\!\! \ _{(J)}$ & $\checkmark\!\!\!\! \ _{(J)}$  \\
\hline
$\frn_{6}=(0,0,0,0,12,13)$ & $N_6$ & $-$ & $-$ & $\checkmark$  \\
\hline
$\frn_{8}=(0,0,0,0,0,12)$ & $N_8$ & $-$ & $\checkmark$ & $-$  \\
\hline
\end{tabular}
%}
\bigskip
\caption{The sign of $K_1$}
\label{table-sign}
\end{table}

Let us recall that the groups $N_{2}, N_3, N_4, N_5$ and $N_{6}$ admit left-invariant balanced metrics, while $N_3$ is the unique group admitting locally conformally K\"ahler metrics \cite{U}. We refer to \cite{LUV} for a classification of the complex structures satisfying $K_1<0$, $=0$ or $>0$.

\begin{remark}\rm
The nilpotent Lie group $N_3$ 
is given by the product of $\mathbb{R}$ with the 5-dimensional generalized Heisenberg group, while $N_5$ is the real Lie group underlying the Iwasawa manifold.
\end{remark}

We stress that, by an appropriate choice either of the Gauduchon connection $\nabla^{\tau}$ or of the slope parameter $\alpha'$, the sign of the constant $K_2=K_2(\omega_0,\alpha',\tau)$ in the model problem \eqref{model} can take any value. Therefore, the results presented in Section~\ref{qualit-model-prob} apply to every nilpotent Lie group in Table~\ref{table-sign}. In particular, we get

\begin{proposition}\label{prop_imm-anc}
%The Lie groups $N_2,\ldots,N_6$ and $N_8$ admit both immortal and ancient left-invariant solutions to the Anomaly flow~\eqref{AF-triv}. 
Any Lie group in Table \ref{table-sign} with $K_1\neq 0$ admits both immortal and ancient left-invariant solutions to the Anomaly flow \eqref{AF-triv}.
\end{proposition}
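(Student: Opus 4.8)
The plan is to leverage the explicit structure already established in Theorem~\ref{model-prob} and the qualitative analysis of Section~\ref{qualit-model-prob}. The key observation is that Theorem~\ref{model-prob} reduces the Anomaly flow \eqref{AF-triv} to the model problem \eqref{model}, whose behaviour depends only on the signs of $K_1$ and $K_2$. Since we are given a Lie group in Table~\ref{table-sign} with $K_1\neq 0$, the sign of $K_1$ is fixed (and nonzero) by the complex structure $J$, as established in the preceding proposition. Thus it remains to exhibit, for each such fixed sign of $K_1$, a choice of data producing an immortal solution and a choice producing an ancient solution.

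First I would fix any left-invariant non-parallelizable complex structure $J$ on the given group realizing the nonzero value of $K_1$, and fix a compatible initial metric $\omega_0$ in almost diagonal form as allowed by Lemma~\ref{almost-diag-lemma}. The crucial freedom is that $K_2=K_2(\omega_0,\alpha',\tau)$ may be made to have any sign: as remarked just before the proposition, by choosing the Gauduchon parameter $\tau$ (note the overall factor $\tau-1$ in Proposition~\ref{traza-tau}) or by choosing the sign of the slope parameter $\alpha'$ (recall $K_2=-\alpha'\,c_1 C/16$), one can arrange $K_2>0$ or $K_2<0$ at will. I would make this explicit by remarking that $K_2$ is an odd, hence sign-surjective, function of $\alpha'$ whenever $C\neq 0$.

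Next I would split into the two cases according to the fixed sign of $K_1$. If $K_1>0$, then choosing $K_2>0$ puts us in the regime of Proposition~\ref{pos-pos}, giving an immortal solution; while choosing $K_2<0$ and an initial value $h(0)=r(0)^2<h_0$ puts us in case~(iii) of Proposition~\ref{pos-neg}, giving an ancient solution. Symmetrically, if $K_1<0$, then $K_2<0$ yields an ancient solution by Proposition~\ref{neg-neg}, while $K_2>0$ together with $h(0)<h_0$ yields an immortal solution by case~(iii) of Proposition~\ref{neg-pos}. In each case the prescribed initial value $r(0)^2$ is achieved by rescaling $\omega_0$, which does not affect the sign of $K_1$.

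The main obstacle, and the only point requiring genuine care, is to ensure that $K_2$ can actually be made nonzero of either sign on the given groups, i.e. that the constant $C=C(\omega_0,\tau)$ in Proposition~\ref{traza-tau-almost-diag} does not vanish identically for the admissible data. I would address this by invoking the explicit formula for $\mathrm{Tr}(\Omega^\tau\wedge\Omega^\tau)$ in Proposition~\ref{traza-tau}, specialized to the almost diagonal coefficients \eqref{coeff_alm}, and noting that for a generic choice of $\tau\neq 1$ and of the defining parameters $(\rho,\lambda,x,y)$ the leading bracket is nonzero; the sign of $K_2$ is then governed by the sign of $\alpha'$. Once $C\neq 0$ is secured, surjectivity of the sign of $K_2$ is immediate, and the desired immortal and ancient solutions follow directly from the relevant propositions of Section~\ref{qualit-model-prob}.
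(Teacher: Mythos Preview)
Your proposal is correct and follows essentially the same approach as the paper, which does not give a separate proof but simply records the proposition as a consequence of the sentence preceding it (that $K_2$ can be given any sign by adjusting $\tau$ or $\alpha'$, so the case analysis of Section~\ref{qualit-model-prob} applies). Your write-up is in fact more careful than the paper's, spelling out which of Propositions~\ref{pos-pos}--\ref{neg-pos} yields the immortal and the ancient solution in each sign case; one small simplification is that instead of rescaling $\omega_0$ to force $r(0)^2<h_0$ (which also perturbs $c_1,c_2$ and hence $K_2$), you can keep $\omega_0$ fixed and enlarge $|\alpha'|$, since $h_0=\sqrt{|K_2/K_1|}$ is monotone in $|\alpha'|$.
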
 

This result also extends to nilmanifolds arising from the quotient of a Lie group $N_k$ by a co-compact lattice.
 
\subsection{Convergence of the nilmanifolds}\hfill\medskip

\noindent We are now in a position to prove our convergence result. Note that, a main ingredient in the proof of Theorem~\ref{thm_conv} will be given by the qualitative behaviour of the model problem studied in Section~\ref{qualit-model-prob}, together with Theorem~\ref{almost-diag-prop}.

\medskip
Let us recall that a family of compact metric spaces $(X_t,d_t)$ converges to a metric space $(\bar X, \bar d)$ in {\em Gromov-Hausdorff topology} as $t\to T$ , if for any increasing sequence $t_n\to T$ there exists a sequence of $\varepsilon_{t_n}$-approximations $\varphi_{t_n}:X_{t_n}\to \bar X$ satisfying $\varepsilon_{t_n}\to 0$. By definition, $\varphi:X\to \bar X$ is an $\varepsilon$-{\em approximation} if
$$
|d_t(x,x')-\bar d(\varphi(x),\varphi(x'))|<\varepsilon\,,\qquad \text{for any}\,\, x,x'\in X\,,
$$
and for all $y\in \bar X$ there exists $x\in X$ such that $\bar d(y,\varphi(x))<\varepsilon$ (see e.g. \cite{Rong}).

\begin{proof}[Proof of Theorem \ref{thm_conv}] Let $M=\Gamma\,\backslash\, G$ be a nilmanifold arising from our class of nilpotent Lie groups and let $\{\zeta^1,\zeta^2,\zeta^3\}$ be an invariant (1,0)-frame of $X=(M,J)$. By means of \eqref{J-nilp}, $M$ gives rise to a fibration over a real 4-dimensional tours $\pi:M\to \mathbb T^4$ with fibers spanned by the real and imaginary part of $\zeta^3$. On the other hand, by means of Theorem \ref{almost-diag-prop} and the results presented in Section~\ref{qualit-model-prob}, one gets that either $\{\zeta^1,\zeta^2,\zeta^3\}$ or $\{\zeta^3\}$ shrink to zero along $(1+t)^{-1}\omega_t$ as $t\to \infty$, depending on the signs of $K_1$ and $K_2$, and hence the claim follows. 
%Indeed, if we consider a not necessarily continuous function $f : \mathbb T^4 \to M$ satisfying $\pi \circ f = id$, then for any $\varepsilon > 0$ there exists $t_\ast(\varepsilon) > 0$ such that $(\pi, f)$ is a GH $\varepsilon$-approximation between  $(M, (1+t)^{-1}\omega_t)$  and $(\mathbb T^4)$ for any $t > t_\ast(\varepsilon)$.
\end{proof}

\section{Evolution of the holomorphic vector bundle}\label{sec-flow-example}

\noindent
In this section we study the Anomaly flow \eqref{AF-G} on a class of Lie groups belonging to \eqref{J-nilp}. Explicit computations will be performed on the nilpotent Lie group $N_3$. In particular, we will prove that under certain choices of initial metric and connections, the Anomaly flow converges to a (non-flat) solution of 
the Hull-Strominger-Ivanov system. \medskip

Let $G$ be a $6$-dimensional Lie group and let $J$ be a left-invariant non-parallelizable complex structure on $G$. Let us suppose that there exists a left-invariant $(1,0)$-coframe  $\{\zeta^1,\zeta^2,\zeta^3\}$ on $G$ satisfying the structure equations
\begin{equation}\label{J-nilp-lambda=0}
\begin{cases}
d \zeta^1=d\zeta^2=0,\cr d\zeta^3=\rho\, \zeta^{12} +\zeta^{1\bar{1}} + (x+i\,y)\,\zeta^{2\bar{2}}\,,
\end{cases}
\end{equation}
where $x,y\in \mathbb{R}$ and $\rho\in \{0,1\}$ (i.e. we are considering $\lambda=0$ in \eqref{J-nilp}). Let also the holomorphic vector bundle be $E:=T^{1,0}G$, and
$$
\Psi:=\zeta^1\wedge\zeta^2\wedge\zeta^3\,.
$$ 
Moreover, let the left-invariant Hermitian metrics $(\omega_0,H_0)$ be both diagonal, i.e.
$$
\omega_0  =  \frac{i}{2}\left( r_0^2\, \zeta^{1\bar{1}} + s_0^2\, \zeta^{2\bar{2}} + k_0^2\, \zeta^{3\bar{3}} \right)
$$
and
$$
H_0  =  \frac{i}{2}\left( \tilde r_0^2\, \zeta^{1\bar{1}} + \tilde s_0^2\, \zeta^{2\bar{2}} + \tilde k_0^2\, \zeta^{3\bar{3}}\right)\,.
$$
Then, our main result is the following 

\begin{theorem}\label{prop-diago}
The left-invariant metrics $\omega_t$ and $H_t$ solving the Anomaly flow \eqref{AF-G} remain diagonal along the flow, and the coefficients of $H_t$ evolve via 

\begin{equation}\label{evol-sec-eq}
\left\lbrace 
\begin{aligned}
\frac d{dt}\,\tilde r(t)^2 &= \frac{1}{3 c_1 c_2^2} \Big[ 2 \big( c_2 (\kappa+1)^2 - c_1\rho(\kappa-1)^2 \big) r(t)^2\tilde k(t)^2 \\[-4pt] 
& \hskip3.8cm
 - c_1c_2 (\kappa-1) (c_1 x+c_2)  \tilde r(t)^2 \Big] \frac{\tilde k(t)^2}{r(t)^4\tilde r(t)^2}\,,\\[6pt]
\frac d{dt}\,\tilde s(t)^2 &= \frac{1}{3 c_1^2 c_2}  
\Big[ 2 \big( c_1(\kappa+1)^2(x^2+y^2) - c_2\rho(\kappa-1)^2 \big) r(t)^2\tilde k(t)^2 \\[-4pt] 
& \hskip3.8cm
 - c_1^2 (\kappa-1) \big( c_1(x^2+y^2) + c_2 x \big) \tilde s(t)^2 \Big] \frac{\tilde k(t)^2}{r(t)^4\tilde s(t)^2}\,,\\[6pt]
\frac d{dt}\,\tilde k(t)^2 &= \frac{2}{3 c_1^2 c_2^2}  \Big[ \rho(\kappa-1)^2 \big(  c_1^2 \tilde s(t)^4 + c_2^2 \tilde r(t)^4 \big) 
\\[-4pt] 
& \hskip1.8cm
 - c_1c_2 (\kappa+1)^2 \big(  (x^2+y^2)\tilde r(t)^4 + \tilde s(t)^4 \big) \Big] \frac{\tilde k(t)^6}{r(t)^2\tilde r(t)^4\tilde s(t)^4}\,.
\end{aligned}
\right.
\end{equation}
\end{theorem}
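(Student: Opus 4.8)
The plan is to regard the pair $(\omega_t, H_t)$ as a solution of a system of ODEs on the finite-dimensional space of left-invariant Hermitian metrics, and to prove that the subspace of \emph{diagonal} pairs is invariant under the flow \eqref{AF-G}. Once this is shown, uniqueness of solutions to ODEs forces the diagonal initial datum $(\omega_0, H_0)$ to produce a diagonal solution for all $t$, and the explicit system \eqref{evol-sec-eq} is then obtained by reading off the diagonal components of the two flow equations.

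First I would exploit the identification $E = T^{1,0}G$: a Hermitian metric $H_t$ on $E$ is formally a left-invariant $J$-Hermitian metric, and the Gauduchon connection $\nabla^\kappa$ on $(E, H_t)$ coincides with the canonical connection of the Hermitian structure $(J, H_t)$ defined by \eqref{family_G}. Hence its curvature $A_t^\kappa$ is precisely the tensor produced by the computation in the proof of Proposition \ref{traza-tau}, with the metric coefficients replaced by those of $H_t$, with $\tau$ replaced by $\kappa$, and with $\lambda = 0$ as prescribed by \eqref{J-nilp-lambda=0}. In particular, applying Proposition \ref{traza-tau} to $H_t$ shows that ${\rm Tr}(A_t^\kappa \wedge A_t^\kappa)$ is a multiple of $\zeta^{12\bar{1}\bar{2}}$, so Assumption \ref{condition} holds along the flow. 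This lets me invoke Corollary \ref{cor-diag}: the first equation in \eqref{AF-G} preserves the diagonal condition for $\omega_t$, which therefore stays of the form given there, with constants $c_1, c_2>0$ fixed by $\omega_0$.

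Next I would analyze the second equation in \eqref{AF-G}, namely $H_t^{-1}\partial_t H_t = \omega_t^2 \wedge A_t^\kappa / \omega_t^3$. Assuming $H_t$ diagonal, I would write out the connection $1$-forms $(\sigma^\kappa)^i_j$ of $\nabla^\kappa$ using the explicit list in the proof of Proposition \ref{traza-tau} with $\lambda = 0$ and the coefficients $\tilde r, \tilde s, \tilde k$ of $H_t$, form the curvature $2$-forms $(A^\kappa)^i_j$ via \eqref{curvature} and the formulas in Appendix~A, and then contract against the diagonal $\omega_t$ of Corollary \ref{cor-diag} (whose coefficients are $r(t)^2$, $(c_2/c_1)\,r(t)^2$ and $c_1c_2/8$) to compute the endomorphism $\omega_t^2\wedge A_t^\kappa/\omega_t^3$ of $E$. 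The decisive point is that, with $\lambda = 0$ and both metrics diagonal, all its off-diagonal entries vanish; hence $H_t^{-1}\partial_t H_t$ is diagonal and the diagonal ansatz is preserved. Reading the three diagonal entries then yields $\partial_t\tilde r^2$, $\partial_t \tilde s^2$ and $\partial_t \tilde k^2$, and assembling them produces the system \eqref{evol-sec-eq} after simplification.

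The hard part will be the explicit curvature computation: obtaining the \emph{full} tensor $A_t^\kappa$, rather than merely its trace as in Proposition \ref{traza-tau}, and then carrying out the contraction $\omega_t^2\wedge A_t^\kappa/\omega_t^3$ component by component. Checking that every off-diagonal entry genuinely cancels is simultaneously the technical bottleneck and the conceptual content of the assertion that $H_t$ stays diagonal; the hypothesis $\lambda = 0$ in \eqref{J-nilp-lambda=0} is exactly what forces these cancellations. The remaining algebra—collecting the diagonal entries into the stated expressions, polynomial in $\tilde r^2, \tilde s^2, \tilde k^2$ and weighted by $\rho$, $x$, $y$, $\kappa$ and the constants $c_1, c_2$—is routine but lengthy, and I would defer it to an appendix, as the paper already does for the analogous trace computation of ${\rm Tr}(\Omega^\tau\!\wedge\Omega^\tau)$.
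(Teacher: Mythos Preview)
Your strategy matches the paper's: verify that the trace of $A_t^\kappa\wedge A_t^\kappa$ is a multiple of $\zeta^{12\bar1\bar2}$ so that Corollary~\ref{cor-diag} applies to $\omega_t$, then check that the endomorphism $\omega_t^2\wedge A_t^\kappa/\omega_t^3$ is diagonal whenever both metrics are, and read off the three diagonal entries. One point you have glossed over is that the connection $1$-forms listed in the proof of Proposition~\ref{traza-tau} and the curvature forms in Appendix~A are expressed in an adapted basis for the \emph{same} metric whose connection is being computed; substituting the coefficients $\tilde r,\tilde s,\tilde k$ of $H_t$ there gives you $A^\kappa$ in an adapted basis $\{\tilde e^l\}$ for $H_t$, not for $\omega_t$, so before contracting against $\omega_t$ you must pass through the diagonal change of basis $\tilde e^l \mapsto e^l$. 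The paper carries this out explicitly in Appendix~B (Lemma~\ref{lemm_app}), obtaining $(\sigma^\kappa)^i_j$ and then $(A^\kappa)^i_j$ directly in the $\omega$-adapted frame; this is the step that produces the mixed factors of $r,s,k$ and $\tilde r,\tilde s,\tilde k$ visible in \eqref{inst-diag} and hence in \eqref{evol-sec-eq}. It is not a conceptual obstacle, but you should flag it so the reader does not wonder where the two sets of metric coefficients come from.
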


To prove our statement, we need the following lemma.

\begin{lemma}\label{lemm_ass} Under the hypotheses of Theorem \ref{prop-diago}, 
$$
{\rm Tr}(A_0^\kappa \wedge A_0^\kappa)=C_0 \, \zeta^{12\bar{1}\bar{2}},
$$
where $C_0=C_0(\lambda,x,y;\omega_0,H_0;\kappa)$ is a constant depending both on the Hermitian structures and the connection $\nabla^\kappa$.
\end{lemma}

\begin{proof} The proof directly follows by Lemma~\ref{lemm_app} in Appendix~B. %%Appendix~\ref{apendiceB}.
\end{proof}

\begin{proof}[Proof of Theorem \ref{prop-diago}]
Let us focus on the evolution of $H_t$ via
\begin{equation}\label{sec-eq}
H_t^{-1} \partial_t\, H_t  =  \displaystyle\frac{\omega_t^2\wedge A_t^\kappa}{\omega_t^3}\,.
\end{equation}
We first show that there exists $\widetilde T>0$ such that $H_t$ holds diagonal for any $t\in[0,\widetilde T)$. To this end, it is enough to prove that $\omega_t^2\wedge(A_t^\kappa)_{\bar j}^i=0$ for any $i\not=j$ and $t=0$. Thus, let $H$ and $\omega$ be two left-invariant diagonal Hermitian metrics on $G$ given by
$$
H  =  \frac{i}{2}\left( \tilde r^2\, \zeta^{1\bar{1}} + \tilde s^2\, \zeta^{2\bar{2}} + \tilde k^2\, \zeta^{3\bar{3}}\right)\,,\qquad \tilde s^2,\tilde r^2,\tilde k^2>0\,,
$$
and
$$
\omega  =  \frac{i}{2}\left(  r^2 \,\zeta^{1\bar{1}} +  s^2\, \zeta^{2\bar{2}} + k^2\, \zeta^{3\bar{3}}\right)\,,\qquad s^2,r^2,k^2>0\,.
$$ 
If we consider $\{ e^1,\dots, e^6\}$ a left-invariant coframe on $G$ such that
$$
\delta_1\,\zeta^1=e^1\!+i\,e^2= e^1\!-\!i\,J e^1\,, \quad \delta_2\,\zeta^2=e^3\!+i\,e^4= e^3\!-\!i\,J e^3\,, \quad \delta_3\,\zeta^3=e^5\!+i\,e^6= e^5\!-\!i\,J e^5\,, 
$$
with $\delta_1=r$, $\delta_2=s$ and $\delta_3=k$, then we get 
%%%\begin{equation}\label{compl-curv-form}
$$
(A^\kappa)_{\bar j}^i= \frac{1}{\delta_i\delta_j}\,\left((A^\kappa)^{ e^i}_{ e^j}+i\, (A^\kappa)^{ e^i}_{J e^j}-i\, (A^\kappa)^{J e^i}_{ e^j}+(A^\kappa)^{J e^i}_{J e^j}\right)\,,
$$
%%%\end{equation}
where $(A^\kappa)^{ e^i}_{ e^j}$ are the curvature 2-forms of $\nabla^\kappa$ explicitly computed in Appendix~B 
%%%Appendix~\ref{apendiceB} 
(see the proof of Lemma~\ref{lemm_app}). 
Thus, the only non-zero entries in the right-hand side of \eqref{sec-eq} are given by
%$$
%\frac{F^2\wedge (A^\kappa)_{\bar j}^i}{F^3}
%$$ 
\begin{equation}\label{inst-diag}
\begin{aligned}
\frac{\omega^2\wedge (A^\kappa)^1_{\bar 1}}{\omega^3}&= \frac 1{12}\frac{\tilde k^2}{r^4s^2k^2\tilde r^4} \left[ r^2\tilde k^2 \left(  (\kappa+1)^2s^2-\rho(\kappa-1)^2r^2 \right) - 4 k^2 \tilde r^2 (\kappa-1) (x r^2+s^2)\right]\,,\\[5pt]
\frac{\omega^2\wedge (A^\kappa)^2_{\bar 2}}{\omega^3}&= \frac 1{12}\frac{\tilde k^2}{r^4s^2k^2\tilde s^4} 
\Big[ s^2\tilde k^2 \left(  (\kappa+1)^2(x^2+y^2)r^2-\rho(\kappa-1)^2s^2 \right) \\ 
& \hskip3.8cm
 - 4 k^2 \tilde s^2 (\kappa-1) \left( (x^2+y^2)r^2+ x\,s^2 \right) \Big]\,,\\[5pt]
\frac{\omega^2\wedge (A^\kappa)^3_{\bar 3}}{\omega^3}&= \frac 1{12}\frac{\tilde k^4}{r^4s^2k^2\tilde r^4\tilde s^4} \Big[ \rho(\kappa-1)^2 \left(  r^4 \tilde s^4 + s^4 \tilde r^4\right) - (\kappa+1)^2r^2s^2 \left(  (x^2+y^2)\tilde r^4 + \tilde s^4 \right) \Big]\,,
\end{aligned}
\end{equation}
and hence our claim follows, since $\omega_0$ and $H_0$ are both diagonal.

On the other hand, by means of Lemma \ref{lemm_ass} and Corollary \ref{cor-diag}, there also exists $\widehat T>0$ such that $\omega_t$ holds diagonal for any $t\in[0,\widehat T)$. Therefore, by the existence of $\widehat T>0$ and $\widetilde T>0$, it follows that $\omega_t$ and $H_t$ hold diagonal for any $t$ along the flow.
Finally, the evolution equations in \eqref{evol-sec-eq} are a direct consequence of \eqref{inst-diag}, taking into account that $s(t)^2=\frac{c_2}{c_1} r(t)^2$ and $k(t)^2=\frac{c_1c_2}{8}$ by Corollary~\ref{cor-diag}.
\end{proof}

\begin{remark} \rm Under the assumptions of Theorem \ref{prop-diago}, we have 
$$
{\rm Tr}(A_t^\kappa \wedge A_t^\kappa)=C_t \, \zeta^{12\bar{1}\bar{2}},
$$
where $C_t=C_t(\rho,x,y;\omega_t,H_t;\kappa)$ is a one-parameter function depending both on the Hermitian structures and the connection $\nabla^\kappa$.
\end{remark}

\begin{remark}\label{lambda=0}
{\rm 
Theorem~\ref{prop-diago} applies to the following Lie groups $N_k$ in Table~\ref{table-sign} and complex structures in \eqref{J-nilp-lambda=0}:  
{\bf\emph{(1)}} $\rho=0$, $y=1$ and $x\in \mathbb{R}$, the Lie group is $N_2$;
{\bf\emph{(2)}} $\rho=y=0$ and $x=\pm 1$, the Lie group is $N_3$;
{\bf\emph{(3)}} $\rho=1$, $y\geq 0$ and $1+4x>4y^2$, the Lie group is $N_5$;
{\bf\emph{(4)}} $\rho=x=y=0$, the Lie group is $N_8$.
By \cite{COUV}, this is a classification of all the complex structures in \eqref{J-nilp-lambda=0}.
Regarding the existence of balanced Hermitian metrics, the list reduces to:

\smallskip

$\bullet$  $\rho=y=0$, $x=-1$, the Lie group is $N_3$;

\smallskip

$\bullet$  $\rho=1$, $y=0$ and $x \in (-1/4,0)$, the Lie group is $N_5$.
}
\end{remark}

Our next result shows that if the initial metric $\omega_0$ is balanced, then there always exists a connection $\nabla^\kappa$ such that \eqref{evol-sec-eq} only admits constant solutions.

\begin{proposition}\label{prop-diago-always-sol}
Under the hypotheses of Theorem \ref{prop-diago}, if the initial metric $\omega_0$ is balanced, then there exists a Gauduchon connection $\nabla^\kappa$ for which the right-hand side of the system \eqref{evol-sec-eq} identically vanishes, and the only admissible solutions to the Anomaly flow \eqref{AF-G} are those with constant $H_t$, i.e $H_t \equiv H_0$.
\end{proposition}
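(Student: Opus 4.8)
The plan is to exploit the balanced hypothesis to collapse the entire right-hand side of \eqref{evol-sec-eq} onto a single scalar factor, and then to choose the Gauduchon parameter $\kappa$ so that this factor vanishes. First I would record what ``balanced'' means for our diagonal initial data. Since $\omega_0$ is diagonal (so $u=v=z=0$, whence $u_e=0$ and $r_e=r$, $s_e=s$, $k_e=k$) and $\lambda=0$, the balanced characterization \eqref{condis} used in Theorem~\ref{bal-cond} specializes to $y=0$ and $s(0)^2=-x\,r(0)^2$; in particular $x<0$. Combining the latter with Corollary~\ref{cor-diag}, where $c_2/c_1=s(0)^2/r(0)^2$, yields the two identities
\begin{equation}\label{eq:bal-prop}
y=0\,,\qquad c_1\,x+c_2=0\,,
\end{equation}
equivalently $x=-c_2/c_1$ and $x^2+y^2=x^2=c_2^2/c_1^2$.

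Next I would substitute \eqref{eq:bal-prop} into each of the three equations of \eqref{evol-sec-eq}. In the first equation the $\tilde r(t)^2$-term carries the factor $c_1x+c_2$ and thus disappears, leaving only a multiple of $c_2(\kappa+1)^2-c_1\rho(\kappa-1)^2$. In the second equation the $\tilde s(t)^2$-term has coefficient $c_1x^2+c_2x$, which vanishes by \eqref{eq:bal-prop}, while the remaining term equals $(c_2/c_1)\bigl(c_2(\kappa+1)^2-c_1\rho(\kappa-1)^2\bigr)$ after inserting $x^2=c_2^2/c_1^2$. Finally, grouping the bracket of the third equation by powers of $\tilde r(t)^4$ and $\tilde s(t)^4$ and using $x^2=c_2^2/c_1^2$, both groups turn out to be proportional to the very same quantity. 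Hence, under the balanced assumption, the whole system \eqref{evol-sec-eq} reduces to $\tfrac{d}{dt}\tilde r^2=\tfrac{d}{dt}\tilde s^2=\tfrac{d}{dt}\tilde k^2=0$ exactly when
\begin{equation}\label{eq:kappa-prop}
c_2\,(\kappa+1)^2-c_1\,\rho\,(\kappa-1)^2=0\,.
\end{equation}

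It remains to solve \eqref{eq:kappa-prop} for a real $\kappa$ in each balanced case of Remark~\ref{lambda=0}. For $N_3$ one has $\rho=0$, so \eqref{eq:kappa-prop} becomes $c_2(\kappa+1)^2=0$; as $c_2>0$ this forces $\kappa=-1$, the Strominger--Bismut connection. For $N_5$ one has $\rho=1$ and $x\in(-1/4,0)$, so \eqref{eq:kappa-prop} reads $(\kappa+1)^2/(\kappa-1)^2=c_1/c_2=-1/x>4$; writing $p:=(\kappa+1)/(\kappa-1)$ gives $p^2=-1/x>4$, so $|p|>2$, $p\neq1$, and $\kappa=(p+1)/(p-1)$ is a well-defined real Gauduchon parameter. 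For the connection $\nabla^\kappa$ so chosen the right-hand side of \eqref{evol-sec-eq} vanishes identically, whence $H_t\equiv H_0$ is the only solution.

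The main obstacle I anticipate is the bookkeeping in the second step: checking that \emph{all three} evolution equations, which a priori involve different monomials in $\tilde r,\tilde s,\tilde k$, genuinely collapse onto the single factor in \eqref{eq:kappa-prop}. This cancellation uses both identities of \eqref{eq:bal-prop} in an essential way---the relation $c_1x+c_2=0$ to kill the terms linear in $\tilde r^2$ and $\tilde s^2$, and the substitution $x^2=c_2^2/c_1^2$ to balance the $\tilde r^4$ and $\tilde s^4$ contributions in the $\tilde k$-equation---so I would carry it out case by case rather than attempt a single uniform identity.
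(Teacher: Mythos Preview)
Your proposal is correct and follows essentially the same strategy as the paper: use the balanced condition to obtain $y=0$ and $c_1x+c_2=0$, substitute into \eqref{evol-sec-eq} so that each equation becomes a multiple of the single factor $c_2(\kappa+1)^2-c_1\rho(\kappa-1)^2$, and then choose $\kappa$ to annihilate this factor. The only cosmetic difference is in the last step: the paper treats \eqref{eq:kappa-prop} as a quadratic in $\kappa$ and checks that its discriminant $16\rho\,c_1c_2\geq 0$ (distinguishing the degenerate linear case $c_2=\rho c_1$, where $\kappa=0$), whereas you split according to Remark~\ref{lambda=0} into the $N_3$ case ($\rho=0$, giving $\kappa=-1$) and the $N_5$ case ($\rho=1$, solved via the ratio $p=(\kappa+1)/(\kappa-1)$).
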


\begin{proof}
As we already showed in the proof of Theorem \ref{bal-cond}, a diagonal metric $\omega_0$ is balanced if and only if $c_1 (x+i\, y)+c_2=0$, which is equivalent to require 
\begin{equation}\label{balanced_prop}
x=-\frac{c_2}{c_1}\qquad \text{and}\qquad  y=0\,,
\end{equation}
with $c_1=\frac{\sqrt{8}\,r_0k_0}{s_0}>0$ and $c_2=\frac{\sqrt{8}\,s_0k_0}{r_0}>0$ by Corollary \ref{cor-diag}. Therefore, by means of \eqref{balanced_prop}, the system \eqref{evol-sec-eq} can be written as
$$
%%%\begin{equation}\label{evol-sec-eq-bis}
\left\lbrace 
\begin{aligned} 
\frac d{dt}\,\tilde r(t)^2 &= \frac{2}{3 c_1 c_2^2} \Big( c_2 (\kappa+1)^2 - c_1\rho(\kappa-1)^2 \Big) \frac{\tilde k(t)^4}{r(t)^2\tilde r(t)^2}\,,\\[6pt]
\frac d{dt}\,\tilde s(t)^2 &= \frac{2}{3 c_1^3}  
\Big( c_2(\kappa+1)^2 - c_1\rho(\kappa-1)^2 \Big) \frac{\tilde k(t)^4}{r(t)^2\tilde s(t)^2}\,,\\[6pt]
\frac d{dt}\,\tilde k(t)^2 &= \frac{2}{3 c_1^3 c_2^2}  \Big( c_1\rho(\kappa-1)^2  - c_2 (\kappa+1)^2  \Big)
\frac{\big(  c_1^2 \tilde s(t)^4 + c_2^2 \tilde r(t)^4 \big) \, \tilde k(t)^6}{r(t)^2\tilde r(t)^4\tilde s(t)^4}\,.
\end{aligned} 
\right.
%%%\end{equation}
$$
Then, the right-hand side of the system identically vanishes if and only if 
\begin{equation}\label{polynomial}
c_2 (\kappa+1)^2 - c_1 \rho\, (\kappa-1)^2= (c_2-\rho\,c_1) \kappa^2 + 2(c_2+\rho\,c_1) \kappa + c_2-\rho\,c_1=0\,.
\end{equation}
Finally, since the discriminant of this quadratic polynomial is given by 
$$
\Delta=32 \rho c_1c_2 \geq 0\,,
$$ 
the claim follows.
\end{proof} 

Remarkably, by the proof of Proposition \ref{prop-diago-always-sol}, we can distinguish the following two remarkable cases:
\begin{enumerate}\setlength\itemsep{0.5em}
\item[$\bullet$]
If $c_2=\rho\,c_1$, then the only solution to the polynomial \eqref{polynomial} is $\kappa=0$. Therefore, $\nabla^\kappa$ is given by the \emph{Lichnerowicz connection} $\nabla^0$.
\item[$\bullet$] If $c_2\ne \rho\,c_1$, then the solutions to \eqref{polynomial} are either the Bismut connection ($\kappa=-1$) when $\rho=0$,  or the Gauduchon connections $\nabla^{\kappa^{\pm}}$ corresponding to the values 
$\kappa^{\pm}=\frac{c_1 + c_2 \pm 2\sqrt{c_1c_2}}{c_1 - c_2}$ 
when $\rho=1$.
\end{enumerate}

\begin{remark}\rm Given a solution $H_t$ to \eqref{evol-sec-eq}, it may happen that its Gauduchon connection  $\nabla^{\kappa}_{t}$ does not satisfy the condition $({A^{\kappa}_{t}})^{2,0}=({A^{\kappa}_{t}})^{0,2}=0$. For instance, let us consider the Lie group arising from \eqref{J-nilp-lambda=0} when $\rho=1$, $x=-\frac{1}{8}$ and $y=0$, which corresponds to $N_5$ (see Remark~\ref{lambda=0}). Then, the diagonal metric 
$\omega_0= \frac{i}{2}\left( \zeta^{1\bar{1}} + \frac{1}{8}\, \zeta^{2\bar{2}} + \zeta^{3\bar{3}} \right)$ on $N_5$ is balanced and, by means of Proposition~\ref{prop-diago-always-sol}, one gets that for $\kappa^{\pm}=\frac{9\pm 4\sqrt{2}}{7}$ the system \eqref{evol-sec-eq} is solved by the constant metric $H_t\equiv \frac{i}{2}\left( \zeta^{1\bar{1}} + \zeta^{2\bar{2}} + \zeta^{3\bar{3}} \right)$. Nonetheless, the Gauduchon connections $\nabla^{\kappa^{\pm}}$ do not satisfy the condition $({A^{\kappa^{\pm}}_{t}})^{2,0}=({A^{\kappa^{\pm}}_{t}})^{0,2}=0$. Indeed, by Appendix~B 
%%%Appendix~\ref{apendiceB}
(see the proof of Lemma~\ref{lemm_app}) one gets that $(A^{\kappa^{\pm}})^{ 1}_{ 2}$ has non-zero component in 
$e^{14}+e^{23}=\frac{-i}{4\sqrt{2}}(\zeta^{12}-\zeta^{\bar{1}\bar{2}})$, and hence its $(2,0)$ and $(0,2)$ components do not identically vanish.
\end{remark}
%
%\medskip
%
%Therefore, an stationary point $T$ of the Anomaly flow \eqref{AF-G} may not give rise to a solution of the Hull-Strominger system. Indeed, 
%%%%as the previous example shows, 
%one needs that the additional condition $({A^{\kappa}_{T}})^{2,0}=0=({A^{\kappa}_{T}})^{0,2}$ be satisfied for the connection $\nabla^{\kappa}_T$. 

In the following section we prove that, on the Lie group~$N_3$, solutions to the Hull-Strominger-Ivanov system can obtained as stationary points to the Anomaly flow.

\subsection{Anomaly flow on $N_3$ and solutions to the Hull-Strominger-Ivanov system}\hfill\smallskip

\noindent Let us consider the simply-connected nilpotent Lie group $N_3$, which admits a left-invariant $(1,0)$-coframe $\{\zeta^1,\zeta^2,\zeta^3\}$ satisfying the structure equations
\begin{equation}\label{N_group}
\begin{cases}
d \zeta^1=d\zeta^2=0,\cr d\zeta^3=\zeta^{1\bar{1}} -\zeta^{2\bar{2}}\,.
\end{cases}
\end{equation}

Next we study the Anomaly flow \eqref{AF-G} on $N_3$ for $\nabla^\kappa$ being the Chern connection (i.e. $\kappa=1$) and the Strominger-Bismut connection (i.e. $\kappa=-1$).

\subsubsection*{$\bullet$ The Chern connection on $T^{1,0}N_3$}\hfill\smallskip

\noindent We start investigating the setting of Theorem \ref{prop-diago} in the special case when $\kappa=1$, i.e. $\nabla_t^\kappa$ is the Chern connection on $(T^{1,0}N_3,H_t)$.

\begin{theorem}\label{h3-Chern-instanton} If $\kappa=1$, then the coefficients of $\omega_t$ and $H_t$ evolve via the ODEs system
\begin{equation}\label{flow-h3-instanton-Chern}
\left\{
\begin{aligned}
\frac d{dt}\, r(t)^2 & =  \frac{c_1^2c_2}{2^5} +\alpha'(1-\tau)(\tau^2-2\tau+5)\, \frac{c_1^3(c_1^2+c_2^2)}{2^{11}\,r(t)^4}\,,\\
\frac d{dt}\,\tilde r(t)^2 &= \frac{8}{3 c_1 c_2}  \frac{\tilde k(t)^4}{r(t)^2\tilde r(t)^2}\,,\\[5pt]
\frac d{dt}\,\tilde s(t)^2 &= \frac{8}{3 c_1 c_2}  
\frac{\tilde k(t)^4}{r(t)^2\tilde s(t)^2}\,,\\[5pt]
\frac d{dt}\,\tilde k(t)^2 &= -\frac{8}{3 c_1 c_2} 
 \Big( \tilde r(t)^4 + \tilde s(t)^4 \Big) \frac{\tilde k(t)^6}{r(t)^2\tilde r(t)^4\tilde s(t)^4}\,.
\end{aligned}
\right.
\end{equation}
Moreover, if $\omega_0$ and $H_0$ are both balanced, then $H_t$ evolves as 
$$
H_t  =  \frac{i}{2} \tilde r(t)^2 \zeta^{1\bar{1}} + \frac{i}{2}\tilde r(t)^2 \zeta^{2\bar{2}} + \frac{i}{2} \frac{\tilde r_0^4 \tilde k_0^2}{\tilde r(t)^4} \, \zeta^{3\bar{3}}\,,
$$
where the function $\tilde r(t)^2$ satisfies 
\begin{equation}\label{tilde-r}
\frac{d}{dt}\, \tilde r(t)^2=\frac 8{3c_1^2}\frac{\tilde r(0)^8 \tilde k(0)^4}{r(t)^2\tilde r(t)^{10}}
\end{equation}
%$$
%\frac d{dt}\!\Big(\tilde{r}(t)^{12}\Big)= 16 \,\frac{\tilde r_0^8 \tilde k_0^4}{c_1^2}\,\frac{1}{r(t)^2}.
%$$
In particular, if $\tau\not=1$ (i.e.  $\nabla_t^\tau$ is different from the Chern connection), then there exists a convenient choice of $\alpha'$ such that 
the solution to the system is given by $\omega_t\equiv\omega_0$ and 
$\tilde r(t) = \displaystyle{\sqrt[12]{A\, t +B}}$, with
$A=\frac{16\,\tilde r_0^8\, \tilde k_0^4}{c_1^2\,r_0^2}$ and $B=\tilde r_0^{12}$. 
\end{theorem}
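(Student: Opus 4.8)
The plan is to derive the entire statement from Theorem~\ref{prop-diago} and the model problem of Theorem~\ref{model-prob}, specialized to $N_3$ and the Chern connection. On $N_3$ the structure equations \eqref{N_group} correspond to $\rho=0$, $x=-1$, $y=0$ in \eqref{J-nilp-lambda=0}, and the Chern connection is $\kappa=1$. For the three equations governing $H_t$ in \eqref{flow-h3-instanton-Chern}, I would simply substitute $\kappa=1$ into \eqref{evol-sec-eq}: since $\kappa=1$ kills every factor $(\kappa-1)$ and $(\kappa-1)^2$ while $(\kappa+1)^2=4$, and $\rho=0$, $x^2+y^2=1$, the system of Theorem~\ref{prop-diago} collapses immediately to the last three lines of \eqref{flow-h3-instanton-Chern}.

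The evolution of $r(t)^2$ comes from the model problem \eqref{model}. The constant $K_1=\tfrac{c_1B}{4}$ with $B=\tfrac{c_1c_2-|c_5|^2}{16}(\rho+\lambda^2-2x)$ gives $K_1=\tfrac{c_1^2c_2}{2^5}$, using $c_5=0$ (as $\omega_0$ is diagonal) and $\lambda=0$. The delicate point, which I expect to be the main obstacle, is the constant $K_2=-\alpha'\tfrac{c_1C}{16}$, since this is the only place where the full trace formula of Proposition~\ref{traza-tau} is needed. Inserting $u_e=0$, $\lambda=0$, $\rho=0$, $y=0$, $x=-1$ makes all brackets containing $u_e$, $\lambda$, or $y$ vanish, and the surviving terms assemble into a single factor $(r_e^4+s_e^4)(\tau^2-2\tau+5)$; the identity $\tau^2-2\tau+5=(\tau-1)^2+4$ is the convenient bookkeeping here. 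Feeding in the almost-diagonal coefficients \eqref{coeff_alm}, namely $r_e^2=r(t)^2$, $s_e^2=\tfrac{c_2}{c_1}r(t)^2$, $k_e^2=\tfrac{c_1c_2}{8}$, reduces $C$ to $(\tau-1)(\tau^2-2\tau+5)\tfrac{c_1^2(c_1^2+c_2^2)}{128}$, whence $K_2=\alpha'(1-\tau)(\tau^2-2\tau+5)\tfrac{c_1^3(c_1^2+c_2^2)}{2^{11}}$, matching the first line of \eqref{flow-h3-instanton-Chern}.

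For the balanced case, I would first note that on $N_3$ (where $x=-1$) the balanced condition $c_1(x+iy)+c_2=0$ from the proof of Theorem~\ref{bal-cond} forces $c_1=c_2$, and likewise $H_0$ balanced forces $\tilde r_0=\tilde s_0$. The first two equations of \eqref{flow-h3-instanton-Chern} are then symmetric, so $\tilde r(t)^2=\tilde s(t)^2$ is preserved. The key observation is that $\tilde k(t)^2\tilde r(t)^4$ is conserved: differentiating and inserting the ODEs, the two contributions cancel exactly, giving $\tilde k(t)^2=\tilde r_0^4\tilde k_0^2/\tilde r(t)^4$ and the stated diagonal form of $H_t$. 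Substituting this into the first $H$-equation produces \eqref{tilde-r}.

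Finally, for $\tau\neq1$ I would pick $\alpha'$ so that $K_1+K_2/r_0^4=0$, placing the initial value $r_0^2$ exactly at the stationary point $\sqrt{-K_2/K_1}$ of the model problem; this linear equation in $\alpha'$ is solvable precisely because $\tau\neq1$ guarantees $(1-\tau)(\tau^2-2\tau+5)\neq0$. Then $r(t)^2\equiv r_0^2$, so $\omega_t\equiv\omega_0$, and \eqref{tilde-r} separates. Writing $w=\tilde r^2$ gives $w^5\,dw=D\,dt$ with $D=\tfrac{8}{3c_1^2}\tilde r_0^8\tilde k_0^4/r_0^2$, hence $w^6=6Dt+\tilde r_0^{12}$ and $\tilde r=\sqrt[12]{At+B}$ with $A=6D=16\tilde r_0^8\tilde k_0^4/(c_1^2r_0^2)$ and $B=\tilde r_0^{12}$, as claimed.
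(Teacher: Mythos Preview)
Your approach is essentially the paper's own, and the computations you sketch are all correct. There is one genuine gap, however: you invoke the model problem of Theorem~\ref{model-prob} for the evolution of $r(t)^2$, but that theorem is stated for the flow~\eqref{AF-triv} with \emph{flat} bundle, whereas here we are running the coupled flow~\eqref{AF-G}. The passage from one to the other requires ${\rm Tr}(A_t^{1}\wedge A_t^{1})=0$, which you never check. This is exactly what the paper verifies via Lemma~\ref{lemm_app}: the trace formula there carries an overall factor $(\kappa-1)$, so it vanishes identically for the Chern connection $\kappa=1$. Once you insert this one line, your argument is complete and matches the paper's; the only cosmetic differences are that the paper recomputes $i\partial\bar\partial\omega_t$ and ${\rm Tr}(Rm_t^\tau\wedge Rm_t^\tau)$ directly rather than quoting the constants $K_1,K_2$, and that it obtains $\tilde k(t)^2\tilde r(t)^4=\text{const}$ by dividing the two ODEs and integrating rather than by differentiating the product---equivalent manoeuvres.
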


\begin{proof} 
By means of Proposition~\ref{AF-evo-F}, the first equation of the Anomaly flow \eqref{AF-G} reduces to
\begin{equation}\label{first-eq-syst}
\frac d{dt}\, r(t)^2= \frac{c_1}{4}\, K(t,\alpha',\tau)\,,
\end{equation}
where $K(t,\alpha',\tau)$ is given by
$$
K(t,\alpha',\tau)\,\zeta^{12\bar 1\bar 2} = i \partial\overline\partial \omega_t - \frac{\alpha'}{4}\left( {\rm Tr}(Rm^\tau_t\wedge Rm^\tau_t) - {\rm Tr}(A^1_t\wedge A^1_t)\right)\,.
$$
By Corollary~\ref{cor-diag} and Proposition~\ref{traza-tau}, a direct computation yields that
$$
\begin{aligned}
&i\partial\overline\partial \omega_t = \frac{c_1c_2}{2^3} \, \zeta^{12\bar{1}\bar{2}}\,,\\
&{\rm Tr}(Rm_t^{\tau}\wedge Rm_t^{\tau}) = (\tau-1)(\tau^2-2\tau+5)\, \frac{c_1^2+c_2^2}{2^7}\, \frac{c_1^2}{r(t)^4} \, \zeta^{12\bar{1}\bar{2}}\,,
\end{aligned}
$$
while, by means of 
Lemma~\ref{lemm_app}, for $\kappa=1$ we have ${\rm Tr}(A^1_t\wedge A^1_t) =0$. Therefore, by using \eqref{first-eq-syst} and \eqref{evol-sec-eq} for $\kappa=1$, $\rho=0=y$ and $x=-1$, one gets the ODEs system \eqref{flow-h3-instanton-Chern}.\smallskip

Now, let $\omega_0$ and $H_0$ be both balanced. By means of \eqref{Bnilp-condition} and \eqref{ecu-1}--\eqref{ecu-5}, the balanced condition implies that
\begin{equation*}\label{bal-ex}
c_2=c_1 \qquad\text{and}\qquad \tilde s_0^2=\tilde r_0^2\,.
\end{equation*}
The latter equality, together with the fact that the functions $\tilde r(t)^2$ and $\tilde s(t)^2$ satisfy similar equations in \eqref{flow-h3-instanton-Chern}, 
leads to $\tilde s(t)^2=\tilde r(t)^2$. Thus, 
the ODEs system \eqref{flow-h3-instanton-Chern} reduces to
\begin{equation}\label{k1}
\left\{
\begin{aligned}
\frac d{dt}\, r(t)^2 & =   \frac{c_1^3}{2^5} +\alpha'(1-\tau)(\tau^2-2\tau+5)\, \frac{c_1^5}{2^{10}\,r(t)^4}\,,\\
\frac d{dt}\,\tilde r(t)^2 &= \frac{8}{3 c_1^2}  \frac{\tilde k(t)^4}{r(t)^2\tilde r(t)^2}\,,\\[5pt]
\frac d{dt}\,\tilde k(t)^2 &= -\frac{16}{3 c_1^2} \frac{\tilde k(t)^6}{r(t)^2\tilde r(t)^4}\,.
\end{aligned}
\right.
\end{equation}
%%%On the other hand, since $\tilde r_0^2=\tilde s_0^2$, we get that $$\tilde r(t)^2=\tilde s(t)^2\,,$$ and hence \eqref{flow-h3-instanton-Chern2} follows.
Therefore, by considering the quotient of $\frac{d}{dt} \tilde r(t)^2$ with $\frac{d}{dt} \tilde k(t)^2$, we get
$$
\int{\frac1{\tilde r(t)^2}\,{{\rm d}\tilde r(t)^2}}=-\frac 12 \int{\frac1 {\tilde k(t)^2}\,{{\rm d}\tilde k(t)^2}}\,.
$$
This in turn implies 
$$
\tilde k(t)=\frac{\tilde r_0^2 \tilde k_0}{\tilde r(t)^2}\,,
$$ 
and hence \eqref{tilde-r} follows. 
%Now, the second equation in \eqref{k1} writes as
%$$
%\frac d{dt}\!\Big(\tilde{r}(t)^{12}\Big)= 6\,\tilde r(t)^{10}\, \frac d{dt}\!\Big(\tilde{r}(t)^2\Big) =16 \,\frac{\tilde r_0^8 \tilde k_0^4}{c_1^2}\,\frac{1}{r(t)^2}.
%$$

Finally, for any value of $r_0^2$ and $\tau\neq 1$, there exists a convenient value of $\alpha'$ making the right-hand side of the first equation in \eqref{k1} equal to zero.
In this case we can explicitly solve the system with 
$$
\tilde r(t) = \displaystyle{\sqrt[12]{A\, t +B}},
$$
where $A=\frac{16\,\tilde r_0^8\, \tilde k_0^4}{c_1^2\,r_0^2}$ and $B=\tilde r_0^{12}$.
\end{proof}

We stress that the explicit solutions found in Theorem~\ref{h3-Chern-instanton} are not stationary solutions to the flow, and hence they do not solve the Hull-Strominger system. In the next subsection, we will construct stationary solutions assuming $\nabla_t^\kappa$ to be the Strominger-Bismut connection.

\subsubsection*{$\bullet$ The Bismut connection on $T^{1,0}N_3$}\hfill\smallskip

\noindent Here we consider the setting of Theorem \ref{prop-diago} in the special case when $\kappa=-1$, i.e. $\nabla_t^\kappa$ is the Strominger-Bismut connection on $(T^{1,0}N_3,H_t)$.

\begin{theorem}\label{h3-Bismut-instanton}
If $\kappa=-1$, then the coefficients of $\omega_t$ and $H_t$ evolve via the ODEs system
\begin{equation}\label{flow-h3-instanton-Bismut}
\left\{
\begin{aligned}
%\frac d{dt}\, r(t)^2 & =  \frac{c_1^2c_2}{2^5} +\alpha'(1-\tau)(\tau^2-2\tau+5)\, \frac{c_1^3(c_1^2+c_2^2)}{2^{11}\,r^4(t)}- \alpha'\,\frac{c_1^3(c_1^2+c_2^2)}{2^7\,r^4(t)}\,,\\
\frac d{dt}\, r(t)^2 & =  \frac{c_1^2c_2}{2^5} +\alpha'(1-\tau)(\tau^2-2\tau+5)\, \frac{c_1^3(c_1^2+c_2^2)}{8^4\,r(t)^4}-\alpha'\,\frac {c_1}{2}\, \frac {\tilde k(t)^4(\tilde r(t)^4+\tilde s(t)^4)}{\tilde r(t)^4\tilde s(t)^4}\,,\\
\frac d{dt}\,\tilde r(t)^2 &= \frac{2}{3 c_2}\left(c_2-c_1\right)\frac{\tilde k(t)^2}{r(t)^4}\,,\\
\frac d{dt}\,\tilde s(t)^2 &= \frac{2}{3 c_2}\left( c_1-c_2 \right)\frac{\tilde k(t)^2}{r(t)^4}\,,\\
\frac d{dt}\,\tilde k(t)^2 &= 0\,.
\end{aligned}
\right.
\end{equation}
If the initial metric $\omega_0$ is balanced, then $H_t\equiv H_0$ is constant, the Strominger-Bismut connection $\nabla^{-1}$ of $H_0$ is a (non-flat) instanton with respect to $\omega_t$, and the Anomaly flow reduces to the ODE
\begin{equation}\label{ODE-h3-instanton-Bismut}
\frac d{dt}\, r(t)^2= {K}_1 + \frac{K_2}{r(t)^4}\,,
\end{equation}
where $K_1=K_1(\omega_0,\alpha',H_0)$ and $K_2=K_2(\omega_0,\alpha',\tau)$ 
are given by
\begin{equation}\label{K_1-K_2-instanton}
K_1:=\frac{c_1^3}{2^5}-\alpha'\,\frac {c_1}{2}\, \frac {\tilde k_0^4(\tilde r_0^4+\tilde s_0^4)}{\tilde r_0^4\tilde s_0^4}\,,\ \qquad\  K_2:=\alpha'(1-\tau)(\tau^2-2\tau+5)\, \frac{c_1^5}{2^{10}}\,.
\end{equation}
Therefore, starting from any balanced initial metric $\omega_0$ on $N_3$, we have the following:
\begin{itemize}
\setlength\itemsep{0.5em}
\item[{\rm (i)}]
Given $\alpha'\not=0$ and $\tau\in \RR$, the metric $H_0$ can be conveniently chosen in order to obtain  $K_1<0$, $=0$ or $>0$ in~$\eqref{ODE-h3-instanton-Bismut}$, and so there always exists  a stationary point to the Anomaly flow which solves the Hull-Strominger system. 
\item[{\rm (ii)}]
Furthermore, if $\alpha'\not=0$ and $\tau=-1$ (i.e. $\nabla_t^\tau$ is the Strominger-Bismut connection of $\omega_t$),
then $\nabla_t^{-1}$ is an instanton with respect to $\omega_t$, and hence there exists a stationary point to the Anomaly flow which solves the Hull-Strominger-Ivanov system.
\end{itemize}
\end{theorem}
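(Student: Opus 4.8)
The plan is to specialize the general machinery already developed to the concrete data $\rho=\lambda=y=0$, $x=-1$ of $N_3$ with $\kappa=-1$, and then to read off the stationary solutions. First I would obtain the evolution system \eqref{flow-h3-instanton-Bismut}: the three equations for $H_t$ follow by substituting $\kappa=-1$, $\rho=0$, $x=-1$, $y=0$ directly into the system \eqref{evol-sec-eq} of Theorem~\ref{prop-diago} (note $(\kappa+1)^2=0$, $(\kappa-1)=-2$ collapse most terms), while the equation for $r(t)^2$ comes from Proposition~\ref{AF-evo-F} together with $i\partial\bar\partial\omega_t=\tfrac{c_1c_2}{2^3}\zeta^{12\bar1\bar2}$ (since $\rho+\lambda^2-2x=2$ and $k^2=\tfrac{c_1c_2}{8}$, via Corollary~\ref{cor-diag}), the trace ${\rm Tr}(Rm^\tau_t\wedge Rm^\tau_t)$ from Proposition~\ref{traza-tau-almost-diag}, and the trace ${\rm Tr}(A^{-1}_t\wedge A^{-1}_t)$ from Lemma~\ref{lemm_app}.

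Next I would impose the balanced condition on $\omega_0$. By Theorem~\ref{bal-cond} (with $x=-1$, $y=0$) this is exactly $c_2=c_1$, and feeding $c_2=c_1$ into \eqref{flow-h3-instanton-Bismut} annihilates the right-hand sides of the $\tilde r$, $\tilde s$, $\tilde k$ equations, so $H_t\equiv H_0$ is stationary. The constancy of $H_t$ then forces the left-hand side of \eqref{sec-eq} to vanish, hence $\omega_t^2\wedge A^{-1}=0$; combined with the vanishing of the off-diagonal entries established in the proof of Theorem~\ref{prop-diago}, this gives the full trace part of the Hermitian--Yang--Mills condition. Substituting $c_2=c_1$ and the now-constant $H_0$ into the $r(t)^2$ equation collapses it to the model ODE \eqref{ODE-h3-instanton-Bismut}, and reading off the constant part and the $r(t)^{-4}$ coefficient yields $K_1$ and $K_2$ as in \eqref{K_1-K_2-instanton}.

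To close the instanton claims I would compute the surviving curvatures explicitly. For $(A^{-1})^{2,0}=(A^{-1})^{0,2}=0$ I would take the connection forms of the Bismut connection of $H_0$ from Appendix~B and check, using $\rho=\lambda=y=0$ and $x=-1$, that the nonzero curvature two-forms are of type $(1,1)$; non-flatness is immediate since these forms are not all zero. For part~(ii) I would run the analogous computation for the Bismut connection $\nabla^{-1}$ of $\omega_t$ (i.e. $\tau=-1$) using the connection forms in the proof of Proposition~\ref{traza-tau}: with $\rho=\lambda=y=u_e=0$, $x=-1$ and $s_e=r_e$, every mixed form and every form carrying an $e^5,e^6$ index drops out, leaving only $(\sigma^{-1})^1_2,(\sigma^{-1})^3_4\propto e^6$. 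Their curvatures are then multiples of $e^{34}-e^{12}$ and $e^{12}-e^{34}$, which are of type $(1,1)$ (each $e^{ij}$ here being a multiple of some $\zeta^{i\bar i}$) and are annihilated by $\omega_t^2\wedge(\cdot)$, so $\nabla^{-1}$ is a non-flat ${\rm SU}(3)$-instanton and \eqref{extra-HS} holds.

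The remaining point is the existence of the stationary solution, i.e. a positive root of $K_1+K_2/r^4=0$. Here I would observe that the factor $\tilde k_0^4(\tilde r_0^4+\tilde s_0^4)/(\tilde r_0^4\tilde s_0^4)$ entering $K_1$ sweeps out all of $(0,+\infty)$ as $H_0$ varies, so for $\alpha'\neq0$ the sign of $K_1$ can be prescribed; choosing it opposite to that of $K_2$ --- whose sign is governed by $\alpha'(1-\tau)$ via \eqref{K_1-K_2-instanton}, using $\tau^2-2\tau+5>0$ --- produces $r_\infty^2=\sqrt{-K_2/K_1}>0$. At this radius the flow is stationary, $\omega$ stays balanced (Theorem~\ref{bal-cond}), the anomaly cancellation holds automatically, and the $A^{-1}$-instanton property already proved completes a solution of the Hull--Strominger system, establishing (i); taking $\tau=-1$ and invoking the extra instanton condition of the previous paragraph upgrades it to a solution of the Hull--Strominger--Ivanov system, establishing (ii). The step I expect to be the main obstacle is precisely the type check of the curvatures: the $N_5$ example in the preceding remark shows that $(A^\kappa)^{2,0}$ and $(Rm^\tau)^{2,0}$ need not vanish in general, so the whole argument hinges on verifying that the special structure constants of $N_3$ force every off-type component of both $A^{-1}$ and $Rm^{-1}$ to cancel.
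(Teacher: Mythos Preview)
Your proposal is correct and follows essentially the same route as the paper: specialize \eqref{evol-sec-eq} and Lemma~\ref{lemm_app} to $\kappa=-1$, $\rho=y=0$, $x=-1$ to obtain \eqref{flow-h3-instanton-Bismut}, use the balanced condition $c_2=c_1$ to freeze $H_t$ and reduce to the model ODE, then verify the $(1,1)$-type of $A^{-1}$ and $Rm^{-1}$ by direct inspection of the curvature forms in Appendices~A and~B, and finally prescribe the sign of $K_1$ via $H_0$ to locate a stationary point. The only point you leave implicit is the degenerate case $K_2=0$ (i.e.\ $\tau=1$), where one must take $K_1=0$ rather than of opposite sign; the paper handles this with a parenthetical remark.
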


\begin{proof} The first part of the statement follows the same argument of Theorem \ref{h3-Chern-instanton}. We just mention that, by means of Lemma~\ref{lemm_app} for $\kappa=-1$, $\rho=0=y$ and $x=-1$, we have 
\begin{equation}\label{instanton-non-flat}
{\rm Tr}(A^{-1}_t\wedge A^{-1}_t) =-8\, \frac {\tilde r(t)^4+\tilde s(t)^4}{\tilde r(t)^4\tilde s(t)^4} \, \tilde k(t)^4\, \zeta^{12\bar{1}\bar{2}}\,.
\end{equation}
Hence, the ODEs system~\eqref{flow-h3-instanton-Bismut} is obtained starting from \eqref{evol-sec-eq}.

Now, let us assume $\omega_0$ balanced. By means of \eqref{Bnilp-condition} and \eqref{ecu-1}--\eqref{ecu-5}, we have 
$$
c_1=c_2\,,
$$ 
and hence the ODEs system \eqref{flow-h3-instanton-Bismut} reduces to 
$\tilde r(t)$, $\tilde s(t)$, $\tilde k(t)$ constant (i.e. $H_t\equiv H_0$), and
%the ODE equation \eqref{ODE-h3-instanton-Bismut}, that is
$$
\frac d{dt}\, r(t)^2= {K}_1 + \frac{K_2}{r(t)^4}\,,
$$
with $K_1$ and $K_2$ as given in \eqref{K_1-K_2-instanton}. 
Therefore, we get that $\omega_t^2\wedge A^{-1}=0$ for any $t\in(T_-,T_+)$ and, 
by means of the curvature forms given in the proof of Lemma~\ref{lemm_app}, a direct computation yields that the curvature of the Strominger-Bismut connection satisfies
\begin{equation}\label{instanto-bismut}
 ({A^{-1}})^{2,0}=({A^{-1}})^{0,2}=0\,.
\end{equation}
Hence, $\nabla^{-1}$ is an instanton with respect to $\omega_t$ for any $t\in(T_-,T_+)$. It is non-flat because ${{\rm Tr}(A^{-1}\wedge A^{-1}) \not=0}$ by \eqref{instanton-non-flat}. \medskip

Finally, the last two claims are consequences of \eqref{K_1-K_2-instanton}, \eqref{instanto-bismut} and similar arguments to those in Section \ref{sec-model}. In greater detail,
given $\alpha'\not=0$ and $\tau\in\RR$, we can choose a metric $H_0$ such that $K_1$ has opposite sign to that of $K_2$ (notice that if $K_2=0$ then we can take $H_0$ so that $K_1$ also vanishes). Now, (i) follows from a qualitative analysis similar 
to that given in Section \ref{sec-model}. For the proof of (ii), it only remains to prove that the Strominger-Bismut connection $\nabla_t^{-1}$ of the metric $\omega_t$ is an instanton with respect to $\omega_t$. This follows from Appendix~A 
for $\tau=-1$, $\rho=\lambda=y=0$, $x=-1$ and $u=0$ (because the metric $\omega_t$ remains diagonal). Indeed, in this case it is direct to check that $({Rm_t^{-1}})^{2,0}=({Rm_t^{-1}})^{0,2}=0$, so there exists a stationary point to the Anomaly flow which solves the system given by \eqref{HS} and \eqref{extra-HS}, i.e. the Hull-Strominger-Ivanov system.
\end{proof}

By Theorem \ref{h3-Bismut-instanton} (ii), when both $\nabla_t^\tau$ and $\nabla_t^\kappa$ are Strominger-Bismut connections
and the initial metric $\omega_0$ is balanced, the Anomaly flow \eqref{AF-G} always converges to a solution of the Hull-Strominger-Ivanov system. 
We note that explicit solutions of this kind were previously found in \cite[Theorem 5.1]{FIUV} and in \cite[Theorem 3.3]{OUV} by means of  other methods.

%%%
%%%\appendix
\section{Appendix A}\label{apendiceA}

\noindent In this Appendix we provide the curvature forms for a connection $\nabla^\tau$ in the Gauduchon family given any left-invariant metric $\omega$. We will denote the curvature by $\Omega^{\tau}$ instead of $Rm^{\tau}$.

For the computation of the curvature forms we use \eqref{curvature} with respect to the
adapted basis $\{e^l\}_{l=1}^6$ found in Proposition~\ref{adapted-ecus}, together with 
the connection 1-forms $(\sigma^{\tau})^i_j$ obtained in 
Proposition~\ref{traza-tau}.

We first notice that the 2-forms $(\Omega^{\tau})^i_j$ satisfy the following relations:
$$
\begin{aligned}
&(\Omega^{\tau})^2_3 = - (\Omega^{\tau})^1_4\,, \quad (\Omega^{\tau})^2_4 = (\Omega^{\tau})^1_3\,,\ \   &(\Omega^{\tau})^2_5 = - (\Omega^{\tau})^1_6\,,\quad &(\Omega^{\tau})^2_6 = (\Omega^{\tau})^1_5\,,\\
&(\Omega^{\tau})^4_5 = - (\Omega^{\tau})^3_6\,, \quad (\Omega^{\tau})^4_6 = (\Omega^{\tau})^3_5\,.
\end{aligned}
$$

Next, we give the explicit expression of the 2-forms $\tfrac{2r_e^4 \Delta_e^2}{k_e^2} (\Omega^{\tau})^i_j$, where 
$\Delta_e:=\sqrt{r_e^2s_e^2-|u_{e}|^2}$, for $(i,j)=\{(1,2), (1,3), (1,4), (1,5), (1,6), (3,4), (3,5), (3,6), (5,6)\}$:

$$
\begin{aligned}
%%% 1,2
\tfrac{2r_e^4 \Delta_e^2}{k_e^2} (\Omega^{\tau})^1_2 & =  
- {\scriptstyle(\tau^2\!-\!2\tau+5) \Delta_e^2}\, e^{12} 
+{\scriptstyle(\tau^2\!-\!2\tau+5)u_{e1} \Delta_e}\, (e^{13}+e^{24})\\
&
+ {\scriptstyle\big(  (\tau^2\!-\!2\tau+5)u_{e2} - \big(\rho(\tau\!-\!1)(\tau+3)+\lambda(\tau^2+3)\big) r_e^2\big) \Delta_e}\, e^{14}\\
&
- {\scriptstyle\big(  (\tau^2\!-\!2\tau+5)u_{e2} + \big( \rho(\tau\!-\!1)(\tau+3)-\lambda(\tau^2+3) \big) r_e^2\big) \Delta_e}\, e^{23}\\
&
- {\scriptstyle\big( 
 (\tau^2\!-\!2\tau+5) |u_{e}|^2  - 2 \lambda (\tau^2+3) u_{e2} r_e^2 
- (\rho (\tau\!-\!1)^2 - \lambda^2 (\tau+1)^2 + 4 x (\tau\!-\!1) ) r_e^4
\big)}\, e^{34}\\
&
- {\scriptstyle\lambda  (\tau\!-\!1)^2 \big(\lambda r_e^2-2 u_{e2}\big) r_e^2}\, e^{56}
\,,
\end{aligned}
$$

$$
\begin{aligned}
%%% 1,3
\tfrac{2r_e^4 \Delta_e^2}{k_e^2} (\Omega^{\tau})^1_3 & = 
\ {\scriptstyle(\tau^2\!-\!2\tau+5) u_{e1} \Delta_e}\, e^{12} \\
&
-{\scriptstyle\big[
(\tau^2\!-\!2\tau+5) u_{e1}^2 
+ \frac12 \big( \rho (\tau\!-\!1)^2 
-2 \lambda^2 (\tau\!-\!1)- \rho \lambda (\tau\!-\!1) (\tau+3) + x (\tau+1)^2 
\big) r_e^4
\big] }\, e^{13}\\
&
-{\scriptstyle\big[
(\tau^2\!-\!2\tau+5) u_{e1} u_{e2} 
- \big( \rho(\tau\!-\!1) (\tau+3)+\lambda(\tau^2+3) \big) u_{e1} r_e^2 
+ \frac{y}{2} (\tau+1)^2 r_e^4
\big]} \, e^{14}\\
&
+{\scriptstyle\big[
(\tau^2\!-\!2\tau+5) u_{e1} u_{e2} 
+ \big( \rho(\tau\!-\!1) (\tau+3)-\lambda(\tau^2+3) \big) u_{e1} r_e^2 
+ \frac{y}{2} (\tau+1)^2 r_e^4
\big] }\, e^{23}\\
&
-{\scriptstyle\big[
(\tau^2\!-\!2\tau+5) u_{e1}^2 
+ \frac12 \big( \rho (\tau\!-\!1)^2 
-2 \lambda^2 (\tau\!-\!1) + \rho \lambda (\tau\!-\!1) (\tau+3) + x (\tau+1)^2 
\big) r_e^4
\big]} \, e^{24}\\
&
+{\scriptstyle\frac{1}{\Delta_e} \big[(\tau^2\!-\!2\tau+5) |u_e|^2 u_{e1} 
- 2 \lambda (\tau^2+3) u_{e1} u_{e2} r_e^2} \\
&\quad\quad\quad
{\scriptstyle+\big( \lambda^2 (\tau^2+3) u_{e1} + x (\tau^2\!-\!2\tau+5) u_{e1} + y (\tau+1)^2 u_{e2} \big) r_e^4 
- \lambda y (\tau^2+3)  r_e^6 \big]} \, e^{34}\\
&
+ {\scriptstyle\frac{1}{\Delta_e}  \,
 (\tau\!-\!1)^2  
\big(\lambda r_e^2-2 u_{e2} \big)  \big(\lambda u_{e1} - y r_e^2 \big)     r_e^2
}\, e^{56}
\,,
\end{aligned}
$$

$$
\begin{aligned}
%%% 1,4
\tfrac{2r_e^4 \Delta_e^2}{k_e^2} (\Omega^{\tau})^1_4  & = 
{\scriptstyle\big(  (\tau^2\!-\!2\tau+5)u_{e2} + 2 \lambda (\tau\!-\!1) r_e^2\big) \Delta_e}\, e^{12}\\
&
-{\scriptstyle\big[
(\tau^2\!-\!2\tau+5) u_{e1} u_{e2} 
+2\lambda(\tau\!-\!1) u_{e1} r_e^2 
- \frac{y}{2} (\tau+1)^2 r_e^4
\big]} \, (e^{13}+e^{24})\\
&
-{\scriptstyle\big[
(\tau^2\!-\!2\tau+5) u_{e2}^2 
- \big( \rho(\tau\!-\!1) (\tau+3)+\lambda(\tau^2\!-\!2\tau+5) \big) u_{e2} r_e^2 } \\
&\quad\quad\quad
{\scriptstyle
+ \frac12 \big( \rho (\tau\!-\!1)^2 
-2 \lambda^2 (\tau\!-\!1) + \rho \lambda (\tau\!-\!1) (\tau+3) + x (\tau+1)^2 
\big) r_e^4
\big]} \, e^{14}\\
&+{\scriptstyle\big[
(\tau^2\!-\!2\tau+5) u_{e2}^2 
+ \big( \rho(\tau\!-\!1) (\tau+3)-\lambda(\tau^2\!-\!2\tau+5) \big) u_{e2} r_e^2  } \\
&\quad\quad\quad
{\scriptstyle
+ \frac12 \big( \rho (\tau\!-\!1)^2 
-2 \lambda^2 (\tau\!-\!1) - \rho \lambda (\tau\!-\!1) (\tau+3) + x (\tau+1)^2 
\big) r_e^4
\big] }\, e^{23}\\
& +{\scriptstyle\frac{1}{\Delta_e} \big[(\tau^2\!-\!2\tau+5) |u_e|^2 u_{e2} 
+ 2 \lambda \big( (\tau\!-\!1) u_{e1}^2 - (\tau^2\!-\!\tau+4) u_{e2}^2 \big) r_e^2} \\
&\quad\quad\quad
{\scriptstyle+\big( \lambda^2 (\tau^2+3) u_{e2} + x (\tau^2\!-\!2\tau+5) u_{e2} - y (\tau+1)^2 u_{e1} \big) r_e^4 
- \lambda x (\tau^2+3)  r_e^6 \big]} \, e^{34}\\
&
- {\scriptstyle\frac{1}{\Delta_e}  \,
 (\tau\!-\!1)^2  
\big(2\lambda u_{e2}^2 - \big(\lambda s_e^2 +\lambda^2 u_{e2} - 2 y u_{e1} \big)  r_e^2 +\lambda x r_e^4
\big) r_e^2}\, e^{56}
\,,
\end{aligned}
$$

$$
\begin{aligned}
%%% 1,5 
\tfrac{2r_e^4 \Delta_e^2}{k_e^2} (\Omega^{\tau})^1_5  & = 
-{\scriptstyle\frac{\lambda}{2} (\tau\!-\!1) \big( (\tau+1) u_{e2} -\rho (\tau\!-\!1) r_e^2 \big) r_e^2} \, e^{15} 
+{\scriptstyle (\tau\!-\!1) \big( \rho (\tau\!-\!1) - \frac{\lambda}{2} (\tau+1) \big) u_{e1} r_e^2 }\, e^{16} \\
&
{\scriptstyle-\frac{\lambda}{2} (\tau\!-\!1) (\tau+1) u_{e1} r_e^2 }\, e^{25} \\
&
-{\scriptstyle\frac{1}{2} (\tau\!-\!1) \big( 2 (\tau+1) s_e^2 + 2\rho (\tau\!-\!1) u_{e2} -\lambda (\tau+1) u_{e2} - \rho\lambda(\tau\!-\!1) r_e^2 \big) r_e^2 }\, e^{26} \\
&
+{\scriptstyle\frac{\lambda}{2\Delta_e} (\tau\!-\!1) (\tau + 1) \big( |u_e|^2 -\lambda u_{e2} r_e^2 + x r_e^4 \big) r_e^2} \, e^{35}  \\
&
+{\scriptstyle\frac{1}{2\Delta_e} (\tau\!-\!1) (\tau + 1) \big( 2u_{e1} s_e^2 -(\lambda^2 u_{e1} -2xu_{e1}+2 y u_{e2}) r_e^2 + \lambda y r_e^4 \big) r_e^2} \, e^{36}  \\
&
-{\scriptstyle\frac{\lambda}{2\Delta_e} (\tau\!-\!1) (\tau + 1) \big( \lambda u_{e1} - y r_e^2  \big) r_e^4 }\, e^{45} \\
&
+{\scriptstyle\frac{1}{2\Delta_e} (\tau\!-\!1)  \big[ 2\rho(\tau\!-\!1)|u_e|^2 + (\tau+1) (2s_e^2 u_{e2} - \lambda |u_e|^2)  } \\
&\quad\quad\quad\quad\quad
{\scriptstyle
- \big( 2 \rho (\tau\!-\!1) s_e^2 + (\tau+1) (2 \lambda s_e^2 \!-\! \lambda^2 u_{e2} \!-\! 2 x u_{e2} \!-\! 2 y u_{e1}) \big) r_e^2     
-  \lambda x (\tau+1) r_e^4 \big] r_e^2} \, e^{46} 
\,,
\end{aligned}
$$

$$
\begin{aligned}
%%% 1,6 ...........
\tfrac{2r_e^4 \Delta_e^2}{k_e^2} (\Omega^{\tau})^1_6  & = 
-{\scriptstyle\frac{\lambda}{2} (\tau\!-\!1) (\tau+1) u_{e1} r_e^2 }\, e^{15} 
-{\scriptstyle\frac{1}{2} (\tau\!-\!1) \big( 2 (\tau+1) s_e^2 - 2\rho (\tau\!-\!1) u_{e2} -\lambda (\tau+1) u_{e2} + \rho\lambda(\tau\!-\!1) r_e^2 \big) r_e^2} \, e^{16} \\
&
+{\scriptstyle\frac{\lambda}{2} (\tau\!-\!1) \big( (\tau+1) u_{e2} +\rho (\tau\!-\!1) r_e^2 \big) r_e^2} \, e^{25} 
+{\scriptstyle(\tau\!-\!1) \big( \rho (\tau\!-\!1) + \frac{\lambda}{2} (\tau+1) \big) u_{e1} r_e^2} \, e^{26} \\
&
-{\scriptstyle\frac{\lambda}{2\Delta_e} (\tau\!-\!1) (\tau + 1) \big( \lambda u_{e1} - y r_e^2  \big) r_e^4 }\, e^{35} \\
&
-{\scriptstyle\frac{1}{2\Delta_e} (\tau\!-\!1)  \big[ 2\rho(\tau\!-\!1)|u_e|^2 - (\tau+1) (2s_e^2 u_{e2} - \lambda |u_e|^2)  } \\
&\quad\quad\quad\quad\quad
{\scriptstyle
- \big( 2 \rho (\tau\!-\!1) s_e^2 - (\tau+1) (2 \lambda s_e^2 \!-\! \lambda^2 u_{e2} \!-\! 2 x u_{e2} \!-\! 2 y u_{e1}) \big) r_e^2     
+ \lambda x (\tau+1) r_e^4 \big] r_e^2 }\, e^{36} \\
&
-{\scriptstyle\frac{\lambda}{2\Delta_e} (\tau\!-\!1) (\tau + 1) \big( |u_e|^2 -\lambda u_{e2} r_e^2 + x r_e^4 \big) r_e^2} \, e^{45} \\
&
-{\scriptstyle\frac{1}{2\Delta_e} (\tau\!-\!1) (\tau + 1) \big( 2u_{e1} s_e^2 -(\lambda^2 u_{e1} -2xu_{e1}+2 y u_{e2}) r_e^2 + \lambda y r_e^4 \big) r_e^2} \, e^{46} 
\,,
\end{aligned}
$$

$$
\begin{aligned}
%%% 3,4 
\tfrac{2r_e^4 \Delta_e^2}{k_e^2} (\Omega^{\tau})^3_4  & =
-{\scriptstyle\big( (\tau^2 \!-\! 2 \tau +5) |u_e|^2 + 4 \lambda (\tau\!-\!1) u_{e2} r_e^2 
- (\tau\!-\!1) (\rho (\tau\!-\!1) + 4 x) r_e^4 \big) }\, e^{12}\\
&
+{\scriptstyle\frac{1}{\Delta_e} \big[ 
(\tau^2 \!-\! 2 \tau +5) |u_e|^2 u_{e1}
+ 4 \lambda (\tau\!-\!1) u_{e1} u_{e2} r_e^2 } \\
&\quad\quad\quad
{\scriptstyle
- \big( 2 \lambda^2 (\tau\!-\!1) u_{e1}
+ \rho \lambda (\tau\!-\!1) (\tau+3) u_{e1}
 - x (\tau^2 \!-\! 2 \tau +5) u_{e1}  + y (\tau+1)^2 u_{e2} \big) r_e^4 
+ y (\tau\!-\!1) (\rho (\tau+3)+2 \lambda) r_e^6
\big]}\, e^{13}\\
&
+{\scriptstyle\frac{1}{\Delta_e} \big[ 
(\tau^2 \!-\! 2 \tau +5) |u_e|^2 u_{e2}
- \big( \rho (\tau\!-\!1)(\tau+3) |u_e|^2 + \lambda (\tau^2+3) u_{e1}^2 
+ \lambda (\tau^2 \!-\! 4 \tau +7) u_{e2}^2 \big) r_e^2 } \\
&\quad\quad\quad
{\scriptstyle
- \big( 2 \lambda^2 (\tau\!-\!1) u_{e2} - \rho\lambda (\tau\!-\!1) (\tau+3) u_{e2}
- x (\tau^2 \!-\! 2 \tau +5) u_{e2} - y (\tau+1)^2 u_{e1} \big) r_e^4 
- x (\tau\!-\!1) (\rho (\tau+3) - 2 \lambda) r_e^6
\big]}\, e^{14}\\
&
-{\scriptstyle\frac{1}{\Delta_e} \big[ 
(\tau^2 \!-\! 2 \tau +5) |u_e|^2 u_{e2} 
+ \big( \rho (\tau\!-\!1)(\tau+3) |u_e|^2 - \lambda (\tau^2+3) u_{e1}^2 
- \lambda (\tau^2 \!-\! 4 \tau +7) u_{e2}^2 \big) r_e^2 } \\
&\quad\quad\quad
{\scriptstyle
- \big( 2 \lambda^2 (\tau\!-\!1) u_{e2} + \rho\lambda (\tau\!-\!1) (\tau+3) u_{e2}
- x (\tau^2 \!-\! 2 \tau +5) u_{e2} - y (\tau+1)^2 u_{e1} \big) r_e^4 
+ x (\tau\!-\!1) (\rho (\tau+3) + 2 \lambda) r_e^6
\big]}\, e^{23}\\
&
+{\scriptstyle\frac{1}{\Delta_e} \big[ 
(\tau^2 \!-\! 2 \tau +5) |u_e|^2 u_{e1} 
+ 4 \lambda (\tau\!-\!1) u_{e1} u_{e2} r_e^2 } \\
&\quad\quad\quad
{\scriptstyle
- \big( 2 \lambda^2 (\tau\!-\!1) u_{e1}
- \rho \lambda (\tau\!-\!1) (\tau+3) u_{e1}
 - x (\tau^2 \!-\! 2 \tau +5) u_{e1}  + y (\tau+1)^2 u_{e2} \big) r_e^4 
- y (\tau\!-\!1) (\rho (\tau+3)-2 \lambda) r_e^6
\big]}\, e^{24}\\
&
-{\scriptstyle\frac{\tau^2 \!-\! 2 \tau +5}{\Delta_e^2} \big[ 
|u_e|^4 - 2\lambda |u_e|^2 u_{e2} r_e^2 + (2x+\lambda^2) |u_e|^2 r_e^4 
- 2 \lambda (x u_{e2} +y u_{e1}) r_e^6 
+(x^2 + y^2) r_e^8 \big]}\, e^{34} \\
&
{\scriptstyle
\ +\ \lambda (\tau\!-\!1)^2 (\lambda r_e^2 - 2 u_{e2})\, r_e^2} \, e^{56}
\,,\\[6pt]
\end{aligned}
$$

$$
\begin{aligned}
%%% 3,5 ...........
\tfrac{2r_e^4 \Delta_e^2}{k_e^2} (\Omega^{\tau})^3_5  & = 
-{\scriptstyle\frac{1}{2\Delta_e} (\tau\!-\!1) (\tau + 1) \big( \lambda |u_{e}|^2 + (\lambda s_e^2 -2 y u_{e1}) r_e^2 \big) r_e^2} \, e^{15} 
+{\scriptstyle\frac{1}{\Delta_e} (\tau\!-\!1) (\tau + 1) \big( \lambda u_{e2} - s_e^2  - x r_e^2 \big) u_{e1} r_e^2} \, e^{16} \\
&
+{\scriptstyle\frac{1}{\Delta_e} (\tau\!-\!1) (\lambda u_{e1} - y r_e^2) \big( (\tau+1) u_{e2} + \rho (\tau\!-\!1) r_e^2  \big) r_e^2} \, e^{25} \\
&
+{\scriptstyle\frac{1}{2\Delta_e} (\tau\!-\!1) 
\big[ 
2 \rho(\tau\!-\!1) |u_{e}|^2 + \lambda(\tau+1)(u_{e1}^2 - u_{e2}^2)+2(\tau+1) s_{e}^2 u_{e2} } \\
&\quad\quad\quad\quad\quad
{\scriptstyle
-\big( 
 2 \rho\lambda (\tau\!-\!1) u_{e2} + \lambda (\tau+1)s_{e}^2 - 2 x (\tau+1) u_{e2}
\big) r_e^2 
+ 2 \rho x (\tau\!-\!1) r_e^4
\big] r_e^2} \, e^{26} \\
&
-{\scriptstyle\frac{1}{2\Delta_e^2} (\tau\!-\!1) 
\big[ 
\lambda (\tau+1) |u_{e}|^2 u_{e2} 
+\big( 
  \rho\lambda(\tau\!-\!1) |u_{e}|^2 + \lambda^2(\tau+1) (u_{e1}^2 - u_{e2}^2) 
-\! \lambda(\tau\!+\!1) s_{e}^2 u_{e2} 
\big) r_e^2  } \\
&\quad\quad\quad\quad\quad
{\scriptstyle
-\!\big( 
\rho\lambda s_{e}^2 (\tau\!-\!1) \!-\! \lambda(\tau\!+\!1) (\lambda s_{e}^2 \!-\! 4 y u_{e1})
\big) r_e^4 
+2 y^2 (\tau\!+\!1) r_e^6
\big] r_e^2} \, e^{35}\\
&
-{\scriptstyle\frac{1}{2\Delta_e^2} (\tau\!-\!1) 
\big[ 
(2\rho (\tau\!-\!1) + \lambda (\tau+1)) |u_{e}|^2 u_{e1}
- \big( 2 \rho (\tau\!-\!1) s_{e}^2 u_{e1} - (\tau+1) (\lambda s_{e}^2 u_{e1}
- 2 \lambda^2 u_{e1} u_{e2} - 2 y |u_{e}|^2)
\big) r_e^2  } \\
&\quad\quad\quad\quad\quad
{\scriptstyle
+ 2 \lambda (\tau+1) (x u_{e1} + y u_{e2}) r_e^4 - 2 xy (\tau+1) r_e^6
\big] r_e^2} \, e^{36} \\
&
-{\scriptstyle\frac{1}{2\Delta_e^2} (\tau\!-\!1) (\tau + 1) 
\big[ 
\lambda |u_{e}|^2 u_{e1} + 
(\lambda s_{e}^2 u_{e1} - 2 \lambda^2 u_{e1} u_{e2} - 2 y |u_{e}|^2) r_e^2
+2 \lambda (x u_{e1} + y u_{e2}) r_e^4 - 2 x y r_e^6
\big] r_e^2} \, e^{45} \\
&
-{\scriptstyle\frac{1}{2\Delta_e^2} (\tau\!-\!1) 
\big[ 
\big( 2 \rho (\tau\!-\!1) u_{e2} -(\tau+1)( \lambda u_{e2} - 2 s_{e}^2) \big)|u_{e}|^2 + \big( \rho\lambda (\tau\!-\!1) s_{e}^2 + \lambda(\tau+1) (\lambda s_{e}^2 - 4 x u_{e2}) \big) r_e^4  +  2 x^2 (\tau+1)  r_e^6 
}
 \\
&\quad\quad\quad\quad\quad
{\scriptstyle-\big( \rho (\tau\!-\!1) (\lambda |u_{e}|^2 + 2 s_{e}^2 u_{e2}) 
     + (\tau+1) (3 \lambda s_e^2 u_{e2} + \lambda^2 (u_{e1}^2 - u_{e2}^2) 
     - 4 x |u_{e}|^2) \big) r_e^2}\big] r_e^2\, e^{46} 
\,,\\[6pt]
\end{aligned}
$$

$$
\begin{aligned}
%%% 3,6 ...........
\tfrac{2r_e^4 \Delta_e^2}{k_e^2} (\Omega^{\tau})^3_6  & =  
\ \,{\scriptstyle \frac{1}{\Delta_e} (\tau\!-\!1) 
(\lambda u_{e1} - y r_e^2) 
\big( (\tau+1) u_{e2} - \rho (\tau\!-\!1) r_e^2  \big) r_e^2} \, e^{15} \\
&
-{\scriptstyle\frac{1}{2\Delta_e} (\tau\!-\!1) 
\big[
2\rho(\tau\!-\!1) |u_{e}|^2 - (\tau+1) (\lambda (u_{e1}^2 - u_{e2}^2) + 2 s_{e}^2 u_{e2}) } \\
&\quad\quad\quad\quad\quad
{\scriptstyle
- \big(  2\rho\lambda (\tau\!-\!1) u_{e2}
-\lambda (\tau+1) s_{e}^2 + 2 x (\tau+1) u_{e2}
\big) r_e^2 
+ 2 \rho x (\tau\!-\!1)  r_e^4
\big] r_e^2} \, e^{16} \\
&
+{\scriptstyle\frac{1}{2\Delta_e} (\tau\!-\!1) (\tau+1)
\big[
\lambda (u_{e1}^2 - u_{e2}^2) +   (\lambda s_{e}^2 - 2 y u_{e1}) r_e^2 
\big] r_e^2} \, e^{25}
-{\scriptstyle\frac{1}{\Delta_e} (\tau\!-\!1) (\tau+1)
\big[
\lambda u_{e2} - s_e^2 - x r_e^2  
\big] u_{e1}\, r_e^2} \, e^{26}\\
&
-{\scriptstyle\frac{1}{2\Delta_e^2} (\tau\!-\!1) 
\big[
\big(2 (\tau+1) s_e^2 - (2 \rho (\tau\!-\!1) + \lambda (\tau+1)) u_{e2}\big) |u_{e}|^2
-  \big( \rho\lambda (\tau\!-\!1) s_e^2 - \lambda (\tau+1)(\lambda s_e^2 - 4 x u_{e2}) \big) r_e^4 + 2 x^2 (\tau+1) r_e^6 
} \\
&\quad\quad\quad\quad\quad
{\scriptstyle+ \big(\rho (\tau\!-\!1) (2 s_e^2 u_{e2} +\lambda |u_{e}|^2) -\! (\tau+1) (3 \lambda s_e^2 u_{e2} + \lambda^2 (u_{e1}^2 \!-\! u_{e2}^2) \!-\! 4 x |u_{e}|^2)\big) r_e^2\big] r_e^2}
 \, e^{36}\\
&
+{\scriptstyle\frac{1}{2\Delta_e^2} (\tau\!-\!1) 
\big[
\lambda (\tau+1) u_{e2} |u_{e}|^2 
-\!\big(\rho\lambda (\tau\!-\!1) |u_{e}|^2 -\! \lambda(\tau+1) (\lambda (u_{e1}^2 \!-\! u_{e2}^2) \!-\! s_e^2 u_{e2})
\big) r_e^2} \\
&\quad\quad\quad\quad\quad
{\scriptstyle
+\big(\rho\lambda (\tau\!-\!1) s_e^2 + \lambda(\tau+1) (\lambda s_e^2 - 4 y u_{e1})
\big) r_e^4 + 2 y^2 (\tau+1) r_e^6
\big] r_e^2} \, e^{45}\\
&
-{\scriptstyle\frac{1}{2\Delta_e^2} (\tau\!-\!1) 
\big[
 (2 \rho (\tau\!-\!1) - \lambda (\tau+1)) u_{e1} |u_{e}|^2-\!2 \lambda(\tau\!+\!1) (x u_{e1} + y u_{e2}) r_e^4 + 2xy(\tau\!+\!1)  r_e^6
}\\
&\quad\quad\quad\quad\quad
{\scriptstyle
-\big(2 \rho (\tau\!-\!1)  s_e^2 u_{e1} + \lambda(\tau+1) s_e^2 u_{e1}
-\!2(\tau\!+\!1) (\lambda^2 u_{e1} u_{e2} +y|u_{e}|^2)
\big) r_e^2\big] r_e^2
} \, e^{46}
\,,
\end{aligned}
$$

$$
\begin{aligned}
%%% 5,6 ...........
\tfrac{2r_e^4 \Delta_e^2}{k_e^2} (\Omega^{\tau})^5_6  & =
-{\scriptstyle \tfrac{2r_e^4 \Delta_e^2}{k_e^2}} \big[\, (\Omega^{\tau})^1_2 + (\Omega^{\tau})^3_4 \,\big] 
- {\scriptstyle 4 (\tau\!-\!1) (\lambda u_{e2} - s_e^2 - x r_e^2) r_e^2}  \, e^{12} \\
&
+{\scriptstyle\tfrac{2}{\Delta_e} (\tau\!-\!1) \big( 2 u_{e1} (\lambda u_{e2} - s_e^2) 
- (\rho \lambda + \lambda^2 + 2 x) u_{e1} r_e^2 
+  (\rho + \lambda) y r_e^4 \big) 
r_e^2}  \, e^{13}\\
&
+{\scriptstyle\tfrac{2}{\Delta_e} (\tau\!-\!1) \big( 2 u_{e2} + (\rho - \lambda) r_e^2 \big)
\big( \lambda u_{e2} - s_e^2 - x r_e^2 \big) 
r_e^2}  \, e^{14} \\
&
-{\scriptstyle\tfrac{2}{\Delta_e} (\tau\!-\!1) \big( 2 u_{e2} - (\rho + \lambda) r_e^2 \big)
\big( \lambda u_{e2} - s_e^2 - x r_e^2 \big) 
r_e^2}  \, e^{23} \\
&
+{\scriptstyle\tfrac{2}{\Delta_e} (\tau\!-\!1) \big( 2 u_{e1} (\lambda u_{e2} - s_e^2) 
+ (\rho \lambda - \lambda^2 - 2 x) u_{e1} r_e^2 
-  (\rho - \lambda) y r_e^4 \big) 
r_e^2}  \, e^{24} \\
&
-{\scriptstyle\tfrac{4}{\Delta_e^2} (\tau\!-\!1) \big[ (\lambda u_{e2}-s_e^2) |u_e|^2 
+ \big( \lambda u_{e2} s_e^2 - \lambda^2 |u_e|^2 - x |u_e|^2 \big) r_e^2 
+ \big( 2 \lambda (x u_{e2} + y u_{e1}) - x s_e^2 \big) r_e^4 
- (x^2+y^2) r_e^6 
\big] 
r_e^2}  \, e^{34} 
\,.\\[6pt]
\end{aligned}
$$

%%%\appendix
\section{Appendix B}\label{apendiceB}

\noindent 
This Appendix is devoted to the computation of ${\rm Tr}(A^\kappa\wedge A^\kappa)$ for a Gauduchon connection $\nabla^\kappa$ on the holomorphic tangent bundle $T^{1,0}G$. In particular, the proof of Lemma \ref{lemm_ass} will follow. \medskip

Let $(G,J)$ be a Lie group equipped with a left-invariant complex structure. Let $\{\zeta^1,\zeta^2,\zeta^3\}$ be a left-invariant (1,0)-coframe satisfying \eqref{J-nilp-lambda=0}. Let also $\omega$ and $H$ be two left-invariant $J$-Hermitian metrics on $G$ given by
\begin{equation}\label{F-nilp2}
\omega  =  \frac i2 ( r^2\, \zeta^{1\bar{1}} + s^2\, \zeta^{2\bar{2}} + k^2\, \zeta^{3\bar{3}}) \qquad \text{and} \qquad H =  \frac i2 ( {\tilde r}^2\, \zeta^{1\bar{1}} + {\tilde s}^2\, \zeta^{2\bar{2}} + {\tilde k}^2\, \zeta^{3\bar{3}})\,,
\end{equation}
for some $r,s,k,{\tilde r}, {\tilde s}, {\tilde k}\in\RR^\ast$. If $\nabla^\kappa$ is a Gauduchon connection of $H$ and $A^\kappa$ its curvature form, to compute the trace ${\rm Tr}(A^\kappa \wedge A^\kappa)$ by using \eqref{curvature} and \eqref{traceRmRm} we need to write the connection 1-forms ${\sigma^\kappa}$ in terms of an adapted basis $\{e^l\}_{l=1}^6$ for $\omega$ (see Proposition \ref{adapted-ecus}). In the following, we will denote by $(\sigma^\kappa)^{i}_{ j}$ and $(A^\kappa)^i_j$ the connection 1-forms and the curvature 2-forms, respectively, written in terms of $\{e^l\}_{l=1}^6$.\smallskip

Let $\{{\tilde e}^{l}\}_{{l}=1}^6$ be an {\em adapted basis} for the metric $H$ and $\{\tilde e_l\}_{l=1}^6$ its dual. In view of Section~\ref{sub-trace}, the connection 1-forms $(\sigma^\kappa)^{\tilde i}_{\tilde j}$ associated to $\nabla^\kappa$ are given by
$$
\nabla_{\tilde e_k} \tilde e_j = (\sigma^\kappa)^{\tilde 1}_{\tilde j}(\tilde e_k)\, 	\tilde e_1 +\cdots+ (\sigma^\kappa)^{\tilde 6}_{\tilde j}(\tilde e_k)\, \tilde e_6\,.
$$
On the other hand, if $\{e_l\}_{l=1}^6$ denotes the dual basis of $\{e^l\}_{l=1}^6$, and $M:=(M^{ p}_j)$ is the change-of-basis matrix from $\{e_l\}$ to $\{\tilde e_l\}$, i.e.
$$
\tilde e_j= M^{ p}_j \, { e}_{ p}\,, \quad \text{for every}\,\, 1\leq j\leq 6\,,
$$
then one gets
$$
\nabla_{\tilde e_k}\tilde e_j  =  \nabla_{M^p_k \, {e}_{p}} ( M^{q}_j \, { e}_{ q})=  M^{ p}_k \, M^{ q}_j \, \nabla_{{ e}_{ p}} { e}_{ q}= M^{ p}_k \, M^{ q}_j \, (\sigma^\kappa)^{ l}_{ q}({ e}_{ p})\, { e}_{ l}= M^{ p}_k \, M^{ q}_j\, N^i_{ l} \, (\sigma^\kappa)^{ l}_{ q}({ e}_{ p}) \tilde e_i\,,
$$
with $N:=M^{-1}$ (that is, ${ e}_{ l}= N^i_{ l}\,\tilde e_i$), and hence
\begin{equation}\label{rel-sigmas}
(\sigma^\kappa)^{\tilde i}_{\tilde j}(\tilde e_k) = {\tilde g}(\nabla_{\tilde e_k} \tilde e_j,\tilde e_i) =  M^{ p}_k \, M^{ q}_j\, N^i_{ l} \, (\sigma^\kappa)^{ l}_{ q}({ e}_{ p})\,.
\end{equation}
Since the $(1,0)$-coframe $\{\zeta^1,\zeta^2,\zeta^3\}$ only depends on the complex structure $J$, by means of \eqref{J-nilp-sigma-anterior}, \eqref{J-taus} and \eqref{es-taus} we have
\begin{equation*}\label{rel-1}\begin{aligned}
e^1+i\,e^2 = r\, \zeta^1\,, \quad\quad
{\tilde r}\, \zeta^1= {\tilde e}^{ 1}+i\,{\tilde e}^{ 2}\,,\\
e^3+i\,e^4 = s\,\zeta^2\,, \quad\quad 
{\tilde s}\,\zeta^2 = {\tilde e}^{ 3}+i\,{\tilde e}^{ 4}\,,\\
e^5+i\,e^6 = k\, \zeta^3\,, \quad\quad 
{\tilde k}\, \zeta^3= {\tilde e}^{ 5}+i\,{\tilde e}^{ 6}\,,
\end{aligned}\end{equation*}
which directly implies
$$
\tilde e^1 = \frac{\tilde r}{ r}\, { e}^{ 1}\,,\quad\quad 
\tilde e^2 = \frac{\tilde r}{r}\, { e}^{ 2}\,, \quad\quad
\tilde e^3 = \frac{ \tilde s } {  s } \, { e}^{ 3}\,, \quad\quad
\tilde e^4 = \frac{ \tilde s } {  s } \, { e}^{ 4}\,,\quad\quad
\tilde e^5=\frac{\tilde k}{ k}\, { e}^{5}\,,\quad\quad
\tilde e^6=\frac{\tilde k}{ k}\, { e}^{6}\,.
$$
Thereby, the change-of-basis matrix $M$ from $\{e_l\}$ to $\{\tilde e_l\}$ is given by the diagonal matrix
$$
M:= {\rm diag}\left( \frac{r}{\tilde r}\,,\frac{r}{\tilde r}\,,\frac{s}{\tilde s}\,,\frac{s}{\tilde s}\,,\frac{k}{\tilde k}\,,\frac{k}{\tilde k}\right)\,.
$$
Thus, by means of \eqref{rel-sigmas}, one gets
\begin{equation}\label{relation_1-forms}
(\sigma^{\kappa})^i_j(e_k) =  M^{i}_i\, N^j_{j}\, N^k_{k} \, (\sigma^{\kappa})^{\tilde i}_{\tilde j}({\tilde e}_{k})\,,
\end{equation}
or, equivalently, 
$$
(\sigma^{\kappa})^i_j =  M^{i}_i\, N^j_{j}\, N^k_{k} \, (\sigma^{\kappa})^{\tilde i}_{\tilde j}({\tilde e}_{k})\, e^k\,.
$$
Finally, since the connection 1-forms $(\sigma^\kappa)^{\tilde i}_{\tilde j}$ are given in the proof of Proposition \ref{traza-tau}, 
a direct computation by means of \eqref{relation_1-forms} yields that 
\begin{equation}\label{conn-form-app}
\begin{aligned}
(\sigma^{\kappa})^1_2=& -\tfrac{\tilde k^2}{k\,{\tilde r}^2}{\scriptstyle(\kappa-1)}\, e^{6}\,,\\[3pt]
(\sigma^{\kappa})^1_5=& -\tfrac{{\tilde k}^2}{2\,k\,{\tilde r}^2}{\scriptstyle (\kappa+1)}\, e^{1} + \tfrac{r\,{\tilde k}^2}{2\, s\,k\,{\tilde r}^2}\, {\scriptstyle \rho (\kappa - 1)}\, e^{3}\,,\\[3pt]
(\sigma^{\kappa})^1_6=&\ \tfrac{{\tilde k}^2}{2\,k\,{\tilde r}^2}{\scriptstyle (\kappa+1)}\, e^{2} + \tfrac{r\,{\tilde k}^2}{2 \, s\,k\,{\tilde r}^2}\, {\scriptstyle \rho(\kappa-1) }\, e^{4}\,,\\[3pt]
(\sigma^{\kappa})^3_4=&\ \tfrac{{\tilde k}^2}{k\,{\tilde s}^2}\, {\scriptstyle y(\kappa-1)}\, e^{5} -\tfrac{{\tilde k}^2}{k\,{\tilde s}^2}\, {\scriptstyle x(\kappa-1)}\, e^{6}\,,\\[3pt]
(\sigma^{\kappa})^3_5=& -\tfrac{s\,{\tilde k}^2}{2\,r\,k\,{\tilde s}^2} \, {\scriptstyle \rho (\kappa-1)}\, e^{1} -\tfrac{{\tilde k}^2}{2\, k\,{\tilde s}^2}\, {\scriptstyle x(\kappa+1)}\, e^{3} -\tfrac{{\tilde k}^2}{2\,k\,{\tilde s}^2}\,{\scriptstyle y(\kappa+1)}\, e^{4}\,,\\[3pt]
(\sigma^{\kappa})^3_6=&-\tfrac{s\,{\tilde k}^2}{2\,r\,k\,{\tilde s}^2}\, {\scriptstyle \rho(\kappa-1)}\, e^{2} -\tfrac{{\tilde k}^2}{2\,k\,{\tilde s}^2}\, {\scriptstyle y(\kappa+1)}\, e^{3}+\tfrac{{\tilde k}^2}{2\,k\,{\tilde s}^2}\,{\scriptstyle x(\kappa+1)}\, e^{4}\,,\\[4pt]
(\sigma^{\kappa})^1_3=&(\sigma^{\kappa})^1_4=(\sigma^{\kappa})^2_3
=(\sigma^{\kappa})^2_4=(\sigma^{\kappa})^5_6=0\,,
\end{aligned}
\end{equation}
together with the following relations
$$
(\sigma^{\kappa})^2_5 = - (\sigma^{\kappa})^1_6\,,\quad (\sigma^{\kappa})^2_6 = (\sigma^{\kappa})^1_5\,,\quad(\sigma^{\kappa})^4_5 = - (\sigma^{\kappa})^3_6\,, \quad (\sigma^{\kappa})^4_6 = (\sigma^{\kappa})^3_5\,,
$$
and $(\sigma^{\kappa})^i_j = - (\sigma^{\kappa})^j_i$.

\begin{lemma}\label{lemm_app}
Let $G$ be a $2$-step nilpotent Lie group equipped with a left-invariant complex structure $J$ which admits a left-invariant $(1,0)$-coframe $\{ \zeta^l\}_{l=1}^3$ satisfying \eqref{J-nilp-lambda=0}. Let $\omega$ and $H$ be two left-invariant $J$-Hermitian metrics defined by \eqref{F-nilp2}. Then, for any Gauduchon connection $\nabla^{\kappa}$ associated to $H$, the trace of its curvature satisfies 
$$
{\rm Tr}(A^\kappa\wedge A^\kappa)=C\zeta^{12\bar 1\bar 2}\,,
$$
where $C=C(\rho,x,y;\omega,H;\kappa)$ is a constant depending both on the Hermitian structures and the connection. More precisely, we have
$$
\begin{aligned}
{\rm Tr}\, (A^\kappa\wedge A^\kappa)= \frac{(\kappa-1) \,\tilde k^4}{2k^2\tilde r^6\tilde s^6}&\Big\lbrace \rho(\kappa-1) \Big[ (2\kappa\, r^2\tilde k^2 + k^2\tilde r^2)\tilde s^6 + (x^2+y^2)(2\kappa\, s^2\tilde k^2 + k^2\tilde s^2)\tilde r^6 \Big] \\
&+ 4x(\kappa-1) \Big( (x^2+y^2)\tilde r^4+\tilde s^4 \Big) k^2\tilde r^2\tilde s^2\\
&-x(\kappa+1)^2 \Big( (x^2+y^2)s^2\tilde r^6 + r^2\tilde s^6 \Big) \tilde k^2  \Big\rbrace\, \zeta^{12\bar 1\bar 2}.
\end{aligned}
$$
\end{lemma}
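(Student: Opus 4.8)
The plan is to run the Chern--Weil computation directly from the connection $1$-forms recorded in \eqref{conn-form-app}, inserting them into the curvature formula \eqref{curvature} and then into the trace \eqref{traceRmRm}. First I would write down the structure equations of the adapted basis $\{e^l\}_{l=1}^6$ for $\omega$: since $\omega$ is diagonal (so $u=v=z=0$, whence $r_e=r$, $s_e=s$, $k_e=k$, $u_e=0$ and $\Delta_e=rs$) and $\lambda=0$, the equations \eqref{J-nilp-real-basis} collapse to $de^1=\cdots=de^4=0$ together with
\begin{equation*}
de^5=\tfrac{k\rho}{rs}(e^{13}-e^{24})+\tfrac{2ky}{s^2}\,e^{34},\qquad
de^6=-\tfrac{2k}{r^2}\,e^{12}+\tfrac{k\rho}{rs}(e^{14}+e^{23})-\tfrac{2kx}{s^2}\,e^{34}.
\end{equation*}
In particular, every connection $1$-form in \eqref{conn-form-app} other than $(\sigma^\kappa)^1_2$ and $(\sigma^\kappa)^3_4$ is a combination of the \emph{closed} forms $e^1,\dots,e^4$, so only these two have a nonzero exterior derivative, and both derivatives land back in $\langle e^1,\dots,e^4\rangle$.

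The key structural observation, which I would isolate before doing any arithmetic, is the block splitting of indices $B_1=\{1,2\}$, $B_2=\{3,4\}$, $B_3=\{5,6\}$. From \eqref{conn-form-app} all $B_1$--$B_2$ connection forms vanish, and each ``mixed'' curvature form coupling $B_3$ to another block reduces in \eqref{curvature} to a \emph{single} product of two connection $1$-forms: the $d$-term vanishes by the previous paragraph, and exactly one index survives the contraction in each sum, giving $(A^\kappa)^1_5=-(\sigma^\kappa)^1_2\wedge(\sigma^\kappa)^1_6$, $(A^\kappa)^1_6=(\sigma^\kappa)^1_2\wedge(\sigma^\kappa)^1_5$, $(A^\kappa)^3_5=-(\sigma^\kappa)^3_4\wedge(\sigma^\kappa)^3_6$ and $(A^\kappa)^3_6=(\sigma^\kappa)^3_4\wedge(\sigma^\kappa)^3_5$. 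Being decomposable $2$-forms, each of these squares to zero, and by the symmetry relations listed after \eqref{conn-form-app} the same holds for $(A^\kappa)^2_5,(A^\kappa)^2_6,(A^\kappa)^4_5,(A^\kappa)^4_6$. Hence all eight $B_3$-mixed terms drop out of the sum \eqref{traceRmRm}.

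What survives are the within-block forms $(A^\kappa)^1_2,(A^\kappa)^3_4,(A^\kappa)^5_6$ and the $B_1$--$B_2$ forms $(A^\kappa)^1_3,(A^\kappa)^1_4,(A^\kappa)^2_3,(A^\kappa)^2_4$; I would check that every one of these lies in $\Lambda^2\langle e^1,e^2,e^3,e^4\rangle$. For the within-block ones any $e^5,e^6$ could enter only through $d(\sigma^\kappa)^1_2$ or $d(\sigma^\kappa)^3_4$, which the structure equations above return to $\langle e^1,\dots,e^4\rangle$; for the $B_1$--$B_2$ ones it holds because they are sums of products such as $(A^\kappa)^1_3=-(\sigma^\kappa)^1_5\wedge(\sigma^\kappa)^3_5-(\sigma^\kappa)^1_6\wedge(\sigma^\kappa)^3_6$ and $(A^\kappa)^1_4=(\sigma^\kappa)^1_5\wedge(\sigma^\kappa)^3_6-(\sigma^\kappa)^1_6\wedge(\sigma^\kappa)^3_5$, again valued in $\langle e^1,\dots,e^4\rangle$. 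Consequently each surviving self-wedge is a multiple of $e^{1234}$, so the whole trace is; pairing $(A^\kappa)^2_4$ with $(A^\kappa)^1_3$ and $(A^\kappa)^2_3$ with $(A^\kappa)^1_4$ via the symmetry relations, I would finally convert $e^{1234}$ into $\zeta^{12\bar1\bar2}$ through part~(c) of Proposition~\ref{adapted-ecus}, which for a diagonal $\omega$ reads $e^{1234}=\tfrac{r^2s^2}{4}\,\zeta^{12\bar1\bar2}$.

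The main obstacle is not conceptual but a matter of bookkeeping: after the reductions above one must still expand the seven relevant self-wedges, extract the scalar coefficient of $e^{1234}$, and simplify it to the stated form of $C$. The overall metric prefactor $\tfrac{(\kappa-1)\tilde k^4}{2k^2\tilde r^6\tilde s^6}$ can be anticipated from the way $\omega$ and $H$ enter \eqref{conn-form-app} (only through the factors $\tilde k^2/(k\tilde r^2)$, $\tilde k^2/(k\tilde s^2)$ and the ratios $r/s$, $s/r$, the latter being absorbed upon passing to $\zeta^{12\bar1\bar2}$), so the genuine work is organizing the $\rho$-, $x$-, $y$- and $\kappa$-dependent contributions into the three bracketed groups appearing in the statement and verifying that no spurious $e^{ijkl}$ with $\{i,j,k,l\}\neq\{1,2,3,4\}$ remains.
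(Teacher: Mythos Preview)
Your plan is correct and follows the same route as the paper: compute the curvature $2$-forms from the connection $1$-forms in \eqref{conn-form-app} via \eqref{curvature}, feed them into \eqref{traceRmRm}, and convert $e^{1234}$ to $\zeta^{12\bar1\bar2}$ through \eqref{relacion}. The paper simply lists all the $(A^\kappa)^i_j$ explicitly and then invokes \eqref{traceRmRm} and \eqref{relacion}; your block decomposition and the decomposability argument for the $B_3$-mixed curvature forms are a clean shortcut the paper does not record, but they are an efficiency device rather than a different method, and the remaining bookkeeping you flag is exactly what the paper carries out in full.
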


\begin{proof} Let $(\sigma^{\kappa})^{ i}_{ j}$ be the connection 1-forms of the Gauduchon connection $\nabla^{\kappa}$ given in \eqref{conn-form-app}. By means of \eqref{J-nilp-real-basis} and \eqref{curvature}, a direct computations yields that the curvature 2-forms $(A^\kappa)^{ i}_{ j}$ of $\nabla^\kappa$ are
$$
\begin{aligned}
(A^\kappa)^{1}_{2}=&\ \tfrac{4 (\kappa-1) k^2\tilde r^2\tilde k^2 -(\kappa+1)^2r^2\tilde k^4}{2r^2k^2\tilde r^4}\, e^{12}
-\tfrac{\rho(\kappa-1) \left( (\kappa+1)r^2\tilde k^2 + 2k^2\tilde r^2 \right) \tilde k^2}{2rsk^2\tilde r^4}(e^{14}+e^{23})\\
&+\tfrac{(\kappa-1) \left( \rho (\kappa-1)r^2\tilde k^2+4xk^2\tilde r^2 \right) \tilde k^2}{2s^2k^2\tilde r^4}\, e^{34}\,,\\[3pt]
(A^\kappa)^{1}_{3}=&-\tfrac{ \left( \rho(\kappa-1)^2+x(\kappa+1)^2 \right) \tilde k^4}{4k^2\tilde r^2\tilde s^2}(e^{13}+e^{24}) - \tfrac{y(\kappa+1)^2 \tilde k^4}{4k^2\tilde r^2\tilde s^2}(e^{14}-e^{23})\,,\\[3pt]
(A^\kappa)^{1}_{4}=&\ \tfrac{y(\kappa+1)^2 \tilde k^4}{4k^2\tilde r^2\tilde s^2}(e^{13}+e^{24})-\tfrac{ \left( \rho(\kappa-1)^2+x(\kappa+1)^2 \right) \tilde k^4}{4k^2\tilde r^2\tilde s^2}(e^{14}-e^{23})\,,\\[3pt]
(A^\kappa)^{1}_{5}=&-\tfrac{(\kappa-1)(\kappa+1)\tilde k^4}{2k^2\tilde r^4}\, e^{26} - \tfrac{\rho(\kappa-1)^2 r \tilde k^4}{2s k^2 \tilde r^4}\, e^{46}\,,\\[3pt]
(A^\kappa)^{1}_{6}=&-\tfrac{(\kappa-1)(\kappa+1)\tilde k^4}{2k^2\tilde r^4}\, e^{16} + \tfrac{\rho(\kappa-1)^2 r \tilde k^4}{2s k^2 \tilde r^4}\, e^{36}\,,\\[3pt]
(A^\kappa)^{3}_{4}=&\ \tfrac{(\kappa-1) \left( \rho (\kappa-1)s^2\tilde k^2+4xk^2\tilde s^2 \right) \tilde k^2}{2r^2k^2\tilde s^4}\, e^{12} +\tfrac{\rho\,y(\kappa-1) \left( (\kappa+1)s^2\tilde k^2 + 2k^2\tilde s^2 \right) \tilde k^2}{2rsk^2\tilde s^4}(e^{13}-e^{24})\\
&-\tfrac{\rho\,x(\kappa-1) \left( (\kappa+1)s^2\tilde k^2 + 2k^2\tilde s^2 \right) \tilde k^2}{2rsk^2\tilde s^4}(e^{14}+e^{23}) + \tfrac{(x^2+y^2)\left(  4(\kappa-1)k^2\tilde s^2 -(\kappa+1)^2 s^2\tilde k^2 \right) \tilde k^2}{2s^2k^2\tilde s^4}\, e^{34}\,,\\[3pt]
(A^\kappa)^{3}_{5}=& -\tfrac{\rho(\kappa-1)^2 s \tilde k^4}{2r k^2\tilde s^4}(y\,e^{25}-x\,e^{26}) -\tfrac{(\kappa-1)(\kappa+1)\tilde k^4}{2k^2\tilde s^4} \left( y^2\,e^{35}-xy (e^{36}+e^{45})+x^2\,e^{46} \right)\,,\\[3pt]
(A^\kappa)^{3}_{6}=&\ \tfrac{\rho(\kappa-1)^2 s \tilde k^4}{2r k^2\tilde s^4}(y\,e^{15}-x\,e^{16}) +\tfrac{(\kappa-1)(\kappa+1)\tilde k^4}{2k^2\tilde s^4} \left( xy (e^{25}-e^{36})-x^2\,e^{26}+ y^2\,e^{35} \right)\,,\\[3pt]
(A^\kappa)^{5}_{6}=& -\tfrac{\left( \rho(\kappa-1)^2s^2\tilde r^4-(\kappa+1)^2 r^2 \tilde s^4\right) \tilde k^4}{2r^2k^2\tilde r^4\tilde s^4}\, e^{12} - \tfrac{\rho y(\kappa-1)(\kappa+1)s\tilde k^4}{2rk^2\tilde s^4} (e^{13}-e^{24})\\ 
&+ \tfrac{\rho (\kappa-1)(\kappa+1)(r^2\tilde s^4 + x\,s^2\tilde r^4)\tilde k^4}{2rsk^2\tilde r^4\tilde s^4} (e^{14}+e^{23})
-\tfrac{\left( \rho(\kappa-1)^2r^2\tilde s^4 - (x^2+y^2)(\kappa+1)^2 s^2\tilde r^4 \right) \tilde k^4}{2s^2k^2\tilde r^4\tilde s^4}\, e^{34}\,,
\end{aligned}
$$
together with the following relations
$$
\begin{aligned}
&(A^\kappa)^{2}_{3}=-(A^\kappa)^{1}_{4}\,,\quad (A^\kappa)^{2}_{4}=(A^\kappa)^{1}_{3}\,,\quad (A^\kappa)^{2}_{5}=-(A^\kappa)^{1}_{6}\,,\quad (A^\kappa)^{2}_{6}=(A^\kappa)^{1}_{5}\,,\\
&(A^\kappa)^{4}_{5}=-(A^\kappa)^{3}_{6}\,,\quad(A^\kappa)^{4}_{6}=(A^\kappa)^{3}_{5}\,.
\end{aligned}
$$
Therefore, the claim follows by using \eqref{traceRmRm} and \eqref{relacion}.
\end{proof}

\end{document}